\documentclass[11pt]{amsart}
\usepackage[dvips]{graphicx} 
\usepackage{amssymb,amscd,color,latexsym,epsfig,xypic,hyperref,leftindex}
\usepackage[mathscr]{euscript}
\usepackage{tikz-cd}
\usepackage{tikz}
\usetikzlibrary{calc,trees,positioning,arrows,chains,shapes.geometric,%
    decorations.pathreplacing,decorations.pathmorphing,shapes,%
    matrix,shapes.symbols}

\tikzset{
>=stealth',
  punktchain/.style={
    rectangle,
    rounded corners,
    draw=black, thick,
    minimum height=3em,
    text centered,
    on chain},
  line/.style={draw, thick, <-},
  element/.style={
    tape,
    top color=white,
    bottom color=blue!50!black!60!,
    minimum width=8em,
    draw=blue!40!black!90, very thick,
    text width=10em,
    minimum height=3.5em,
    text centered,
    on chain},
  every join/.style={->, thick,shorten >=1pt},
  decoration={brace},
  tuborg/.style={decorate},
  tubnode/.style={midway, right=2pt},
}

\DeclareFontFamily{OT1}{rsfs}{}
\DeclareFontShape{OT1}{rsfs}{n}{it}{<-> rsfs10}{}
\DeclareMathAlphabet{\curly}{OT1}{rsfs}{n}{it}

\DeclareFontFamily{U}{mathb}{\hyphenchar\font45}
\DeclareFontShape{U}{mathb}{m}{n}{
      <5> <6> <7> <8> <9> <10> gen * mathb
      <10.95> mathb10 <12> <14.4> <17.28> <20.74> <24.88> mathb12
      }{}
\DeclareSymbolFont{mathb}{U}{mathb}{m}{n}

\makeatletter
\newcommand{\eqnum}{\refstepcounter{equation}\textup{\tagform@{\theequation}}}
\makeatother

\renewcommand\;{\hspace{.6pt}}
\newcommand\C{\mathbb C}

\newcommand\R{\mathbb R}

\newcommand\Z{\mathbb Z}
\newcommand\ZZ{\mathbb{Z}[\textstyle{\frac 12}]}
\renewcommand\AA{\mathbb A}

\newcommand\LL{\mathbb L}

\renewcommand\P{\mathbf P}

\newcommand\cO{\mathcal O}

\newcommand\cC{\mathcal C}

\newcommand\cG{\mathcal G}

\newcommand\cK{\mathcal K}

\newcommand\cS{\mathcal S}

\newcommand\cY{\mathcal Y}

\newcommand\Fd{F_\bullet}

\newcommand\T{\mathsf{T}}

\renewcommand\({\big(}
\renewcommand\){\big)}
\renewcommand\[{\big[}
\renewcommand\]{\big]}
\newcommand\wt{\widetilde}

\makeatletter
\newcommand{\so}{\ \ext@arrow 0359\Rightarrowfill@{}{\hspace{3mm}}\ }
\makeatother
\newcommand{\rt}[1]{\xrightarrow{\ #1\ }}
\newcommand\To{\longrightarrow}

\newcommand\into{\hookrightarrow}
\newcommand\INTO{\ \ar@{^(->}[r]<-.2ex>}

\newcommand\onto{\to\hspace{-3mm}\to}
\newcommand\Onto{\longrightarrow\hspace{-5.5mm}\longrightarrow}
\newcommand\Mapsto{\ \longmapsto\ }

\newcommand\take{\!\smallsetminus\!}

\newfont{\bigtimesfont}{cmsy10 scaled \magstep5}
\newcommand{\bigtimes}{\mathop{\lower0.9ex\hbox{\bigtimesfont\symbol2}}}
\renewcommand\={\ =\ }

\DeclareMathSymbol{\lefttorightarrow}{3}{mathb}{"FC}
\DeclareMathSymbol{\righttoleftarrow}{3}{mathb}{"FD}

\newcommand\vir{\operatorname{vir}}

\newcommand\loc{\operatorname{loc}}

\newcommand\id{\operatorname{id}}

\newcommand\Hom{\operatorname{Hom}}

\newcommand\Ext{\operatorname{Ext}}

\newcommand\Spec{\operatorname{Spec}}

\newcommand\Cone{\operatorname{Cone}}

\newcommand\tot{\operatorname{tot}}
\newcommand\MF{\mathrm{MF}}
\newcommand\mf{\mathrm{mf}}
\newcommand\Coh{\mathrm{Coh}}

\newcommand\beq[1]{\begin{equation}\label{#1}}
\newcommand\eeq{\end{equation}}
\newcommand\beqa{\begin{eqnarray*}}
\newcommand\eeqa{\end{eqnarray*}}

\newcommand\arXiv[1]{\href{http://arxiv.org/abs/#1}{arXiv:#1.}}
\newcommand\mathAG[1]{\href{http://arxiv.org/abs/math/#1}{math.AG/#1.}}

\makeatletter \@addtoreset{equation}{section} \makeatother
\renewcommand{\theequation}{\thesection.\arabic{equation}}
\newtheorem{defn}[equation]{Definition}
\newtheorem{thm}[equation]{Theorem}
\newtheorem{ex}[equation]{Example}
\newtheorem*{thm*}{Theorem}
\newtheorem{lem}[equation]{Lemma}
\newtheorem{cor}[equation]{Corollary}
\newtheorem{prop}[equation]{Proposition}

\theoremstyle{definition}
\newtheorem{rmk}[equation]{Remark}

\title{Lagrangian classes in $K$-theory}
\author{Dongwook Choa and Jeongseok Oh} 

\begin{document}

\maketitle

\begin{abstract} 
For a $(-1)$-shifted Lagrangian in a critical locus, we construct a homomorphism from the $K$-group of matrix factorisations of the critical locus to the $K$-group of the Lagrangian, partially answering the Joyce-Safronov conjecture \cite{JS}. The key step is the construction of a specialisation functor for categories of matrix factorisations along the deformation to the normal cone.

Any $(-2)$-shifted symplectic space is a $(-1)$-shifted Lagrangian of a point, whose $K$-group is $\Z$. The image of $1\in \Z$ under the above homomorphism is the virtual structure sheaf \cite[Definition 5.9]{OT1}.

We prove that two equivalent critical models of a given critical locus induce homomorphisms that commute via Kn\"orrer periodicity. When a torus acts on the Lagrangian, we further prove a localisation formula, namely the commutativity of the homomorphisms associated with the Lagrangian and its fixed locus.
\end{abstract}

\setcounter{tocdepth}{1}
\tableofcontents

\section{Introduction}
\subsection*{Joyce-Safronov conjecture}
Kontsevich-Soibelman \cite{KS} conjectured that for a $(-1)$-symplectic space ($(-1)$-symplectic for simplicity) $M$, there is an associated `category of matrix factorisations' $\MF(M)$. For instance, we associate $\MF(M):=\MF(U,f)$ if $M$ is given as the critical locus of a function $f$ on a smooth space $U$. We call the pair $(U,f)$ a {\em critical model} of $M$ in this case to emphasise that $M$ comes equipped with $(-1)$-symplectic structure given by $(U,f)$.

Heuristically, Joyce-Safronov conjecture then asserts that for a $(-1)$-Lagrangian $L\to M$, there exits a functor 
$$
\MF(M)\To D(L):=D(\mathrm{Coh}L)
$$ 
to the derived category of coherent sheaves on $L$. For instance, if $L$ is $(-2)$-symplectic, then it is $(-1)$-Lagrangian of a point which has a natural $(-1)$-symplectic structure given by the critical model $(\Spec\C,0)$. Hence we expect to have $\MF(\Spec\C,0)\to D(L)$. Assuming a suitable boundedness, the conjecture guarantees an existence of a homomorphism 
$$
\Z\cong K\(\MF(\Spec\C,0)\)\To K_0(L)
$$ 
between $K$-groups. A minimal requirement of the conjecture is that this morphism sends $1$ to the virtual structure sheaf $\widehat{\cO}^{\vir}_L\in K_0\(L,\ZZ\)$ \cite[Definition 5.9]{OT1}. 

\smallskip
Our main theorem regards this $K$-theoretic Joyce-Safronov conjecture when $M$ is given as a critical locus. We say a (-1) Lagrangian $L\to M$ {\em oriented} if the relative tangent complex $T_{L/U}$ is oriented in the sense of \cite[Equation (59)]{OT1} -- we see that $T_{L/U}\cong T_{L/U}^\vee[-2]$ in Section \ref{sect:normalform} so that we can say its orientability. 

\begin{thm*}[Theorem \ref{main2}] Let $L$ be an oriented $(-1)$-Lagrangian of $(-1)$-symplectic $M$ which is the critical locus of a critical model $(U,f)$. Suppose that 
\begin{itemize}
\item $L$ is a quasi-compact Artin stack with affine diagonal, 
\item $L\to M$ is a DM morphism,
\item $L$ has the resolution property,
\item the relative virtual dimension of $L\to U$ is even. 
\end{itemize}
Then we construct a homomorphism, called the {\em $K$-theoretic Lagrangian class}, 
\beq{Kmain}
K\(\MF(U,f)\)\To K_0\(L,\ZZ\). 
\eeq
If $(U,f)=(\Spec\C,0)$, then it takes $\cO_{\Spec\C}$ to $\widehat{\cO}^{\vir}_L$\footnote{In \cite{Ku}, Kuhn constructed a lift of $\widehat{\cO}^{\vir}_L$ in the $\Z/2$-graded derived category $D^{\Z/2}(L)$ under a choice of spin structure on $T_L$. It provides a functor at the categorical level for the Joyce-Safronov conjecture in the case $(U,f)=(\Spec\C,0)$.}.
\end{thm*}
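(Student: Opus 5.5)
Writing $\pi\colon L\to U$ for the composite $L\to M\to U$, the plan is to build \eqref{Kmain} as a composite of three maps, mirroring the construction of $\widehat{\cO}^{\vir}_L$ in \cite{OT1}: specialisation to the normal cone, a square-root Euler class, and the $K$-theoretic Gysin map.

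\textbf{Step 1: normal form.} Using that $L\to M$ is a $(-1)$-Lagrangian and that $M=\mathrm{Crit}(f)$, I will put the relative obstruction theory in a local normal form (Section~\ref{sect:normalform}). The relative tangent complex satisfies $T_{L/U}\cong T_{L/U}^\vee[-2]$. Using the resolution property, I will represent it globally by a three-term complex $B\to E\to B^\vee$ with $E$ an $SO$-bundle, so that $\pi^*f$ vanishes to second order along the intrinsic normal cone. The intrinsic normal cone $\mathfrak C_{L/U}$ then sits inside a cone $C\subset E$ (after pulling back along $B$) that is isotropic for the quadratic form $q$ on $E$. The evenness of the relative virtual dimension makes $\operatorname{rk}E$ even. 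The orientation gives a choice of maximal isotropic $\Lambda$, hence the spin-type Koszul complex $\Lambda^\bullet\Lambda$.

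\textbf{Step 2: specialisation functor (main obstacle).} Take the deformation $\mathcal M\to\AA^1$ of $U$ to the normal cone of $\pi$, realised inside the deformation space of the relevant chart. The function $f$ extends to $\tilde f$ on $\mathcal M$, and its restriction to the special fibre is expected to be $\pi^*f|_{C}+q|_C$. Since $C$ is $q$-isotropic, this restriction is just the pullback of $f$. I then define the specialisation
\[
\MF(U,f)\To\MF(C,\,f|_C)
\]
by pulling back to the general fibre $\mathcal M\setminus\mathcal M_0\cong U\times\mathbb G_m$, extending across $\mathcal M_0$, and restricting to $\mathcal M_0$. The difficulty is well-definedness: extensions across the special fibre are not unique. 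I will handle this with the localisation sequence for $K$-groups of matrix factorisations on $(\mathcal M,\tilde f)$: the kernel of restriction to the open part is supported on $\mathcal M_0$, and $\mathcal M_0$ is a principal divisor, so restriction to it kills such classes via the sequence $\cO\xrightarrow{t}\cO\to\cO_{\mathcal M_0}$. Hence the map is defined at least on $K$-groups, which is all \eqref{Kmain} requires.

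\textbf{Step 3: killing the potential and Gysin.} On $E$ the potential $\pi^*f+q$ has the Koszul matrix factorisation $K_\Lambda=(\Lambda^\bullet\Lambda,\ \wedge s+\iota_{s^\vee})$. Tensoring with $K_\Lambda$ and using Kn\"orrer periodicity converts a class in $K(\MF(C,f|_C))$ into a class in $K_0$ of $C$ supported on $C\cap\Lambda$-zero locus, in the localised form of Oh--Thomas \cite{OT1}. Because $C$ is isotropic, $q|_C=0$ and $f$ has been absorbed, so this yields an honest $\Z/2$-periodic complex that is exact off the zero section. This is the square-root Euler class $\sqrt{\mathfrak e}$, which requires inverting $2$, whence $\ZZ$. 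Composing with the $K$-theoretic Gysin pullback $0^!_B$ to $L$ (which uses affine diagonal, the DM hypothesis and quasi-compactness) gives \eqref{Kmain}. Independence of choices, namely the resolution and $\Lambda$ up to orientation, follows from the same deformation argument.

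\textbf{Normalisation.} For $(U,f)=(\Spec\C,0)$ the specialisation sends $\cO$ to $\cO_C$. The composite is then literally \cite[Definition 5.9]{OT1}, so the image of $\cO_{\Spec\C}$ is $\widehat{\cO}^{\vir}_L$.
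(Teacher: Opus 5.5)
\textbf{The gap.} Your architecture (normal form, specialisation to the cone, Clifford/square-root Euler class, and $\mathrm{sp}(\cO)=\cO_C$ at $(\Spec\C,0)$) matches the paper, and for $f=0$ your argument is fine. For general $f$, however, Steps 2--3 rest on a false claim. The Behrend--Fantechi cone $C\subset E$ is \emph{not} $q$-isotropic for a $(-1)$-Lagrangian. Isotropy is the $f=0$ phenomenon (the Oh--Thomas isotropic cone); in general $q|_C$ is the pullback of $\lambda^{-2}f$, which is nonzero. Already for $U=\AA^2$, $f=xy$, $L=\{0\}$ one has $T=0$, $C=E=\AA^2$ and $q|_C=XY$.

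\textbf{Consequence: the wrong potential.} Because of this you specialise the wrong function. The pullback of $f$ itself restricts on the special fibre to the pullback of $f|_L=0$, so your specialised objects are $2$-periodic complexes on $C$. But any Clifford/Koszul factorisation built from the tautological section of $E$ is a factorisation of $\pm q|_C\neq 0$; in particular your $K_\Lambda$ is a factorisation of $q$, not of $\pi^*f+q$. The tensor product is then a factorisation of a nonzero function, so there is no cohomology to take, and Step 3 does not land in $K_0(L,\ZZ)$. In the example, your recipe sends the Kn\"orrer factorisation $(x,y)$ to $(\lambda X,\lambda Y)|_{\lambda=0}=(0,0)$. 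This is $0$ in $K$, whereas by Theorem \ref{thm:critind} it should map to a nonzero (point) class.

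\textbf{What is missing.} First, you need the fact $f\in I^2_{L/U}$ (Lemma \ref{Lem:fq}). This requires the Joyce--Safronov local cutout model, not just the normal form. It makes $\lambda^{-2}f$ a function on $M^o_{L/U}$ whose pullback to $C\subset E$ is $q$. Second, you need a specialisation for the \emph{rescaled} potential. On $\lambda\neq 0$ one must replace $d_\pm$ by $d_\pm/\lambda$ to get a factorisation of $\lambda^{-2}f$. This generally does not extend across $\lambda=0$ as a matrix factorisation: e.g.\ $(xy,1)$ produces $1/\lambda$. That, not the non-uniqueness of extensions, is the real obstacle.

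\textbf{How the paper handles it.} It closes up the graphs of $d_\pm/\lambda$ in a Grassmannian bundle over $M^o_{L/U}$, restricts the tautological factorisation to $\lambda=0$, and pushes forward. So $\mathrm{sp}$ lands in coherent factorisations $\mf(C_{L/U},\lambda^{-2}f)$; this pushforward is where quasi-compactness and affine diagonal are used. It then pulls back to $C$ and tensors with $\cS^{\vir}_E\in\MF(C,-q)$, where the $\sqrt{-1}$ in the section makes the potentials cancel.

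Your idea of extending across the principal divisor $\mathcal M_0$ and controlling the ambiguity by a localisation sequence could plausibly replace the graph construction at the level of $K$-groups, closer in spirit to Cao--Toda--Zhao. But it only works if run for coherent factorisations of $\lambda^{-2}f$, with coherent rather than matrix factorisations as targets.

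\textbf{Smaller corrections.} The orientation does not give a global maximal isotropic $\Lambda$, only one locally or on a cover. This is why one twists by $\sqrt{\det\Lambda}$ and works with $\ZZ$ coefficients. Also, no separate Gysin map $0^!_B$ is needed: the cohomology of the resulting $2$-periodic complex on $C$ is already supported on $L$.
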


To see where the difficulty of the Theorem is hidden, we would like to explain what is going on with a local cutout model of $L$.

\subsection*{Cutout model}
Following the cutout model of Joyce-Safronov \cite[Example 3.6]{JS}, we assume that $L$ is isomorphic to the zero locus $Z(s)$, where
\begin{itemize}
\item $s:\cO_V\to E$ is a section of
\item $E$ an orthogonal bundle with the quadratic form $q:E\cong E^*$ on
\item $V$ a smooth stack with smooth map $p:V\to U$,
\item satisfying $q(s,s)=p^*f$,
\end{itemize}
\beq{-1local}
\xymatrix@R=1mm@C=2mm{
&&&& (E,q) \ar[lld] \\
L\ \ 
=\ \; Z(s)\ \ \ \ddto \stackrel{\mathrm{closed}}{\subset}\hspace{-1mm} && V \ar[dd]^-{p}\ar@/_1pc/[urr]_-{s} && \\
&&&&& q(s,s)=p^*f \\
M\ \ \cong\ \ Z(df)\ \ \ \stackrel{\mathrm{closed}}{\subset}\hspace{-1mm}  && U \ar[rrd]^-f \\
&&&&\C .\\
}
\eeq
Over the total space $\mathrm{tot}E$, the quadratic form $q$ can be considered as a quadratic function, so that we can consider the category $\MF(E,q)$ of matrix factorisations. We have a canonical module $\cS_E$, called the Clifford module, of the Clifford algebra of $(E,q)$\footnote{By a {\em canonical module}, we mean a module whose $K$-theory class equals the {\em prescribed class}; see Corollary \ref{corsE}. To define it, we need a spin structure on $E$. Thus, it may not exist in the absence of a spin structure, and even when one exists, it need not be unique, as it depends on the choice of spin structure. On the other hand, it always exists locally. However the $K$-theory class is well-defined without a spin structure once we introduce the coefficients $\ZZ$.}. Together with the tautological section $\tau$ of $E$, the pair $(\cS_E,\tau)$ defines a tautological object in $\MF (E,-q)_{V}$, which gives a functor
$$
\MF (E,q)\rt{(\cS_E,\tau)\otimes }\MF (E,0)_{V}.
$$
Using the section $s:\cO_V\to E$, we have the following diagram,
\begin{align}\label{p1}
\xymatrix@C=13mm{
&\MF (E,q) \ar[r]^-{(\cS_E,\tau)} \ar[d]_-{s^*}&\MF (E,0)_{V} \ar[d]^-{s^*}& \\
\MF (U,f)\ar[r]^-{\widehat{p}^*}&\MF(V,p^*f) \ar[r]^-{(\cS_E,s)}&\MF (V,0)_{Z(s)}\ar[r]^-{h^+\oplus h^-[1]} & D^{\Z/2}_{Z(s)}(V).
}
\end{align}
In the above,
\begin{itemize}
\item $\MF(E,0)_V$ denotes the category of factorisations supported on the zero section $V\subset E$,
\item $\widehat{p}^*$ denotes the twisted pullback $\surd{\det T^*_{V/U}}\otimes p^*(-)$,
\item $(\cS_E,s)$, which we will call the Clifford factorisation, is a factorisation given by the section $s$,
\item $h^{\pm}$ takes cohomology.
\end{itemize}
Explicitly, it maps $\Fd\in \MF(U,f)$\vspace{-0.5mm} to the cohomology of $(\cS_E,s)\otimes \surd{\det T^*_{V/U}}\otimes p^*\Fd$. When $(U,f)=(\Spec \C,0)$, it takes $\cO_{\Spec\C}$\vspace{-0.2mm} to the cohomology of $(\cS_E,s)\otimes \surd{\det T^*_{V}}$. We would like to see that its $K$-theory class is $\widehat{\cO}^{\vir}_{Z(s)}$ after assuming that the orthogonal bundle $E$ splits into $E=\Lambda\oplus \Lambda^*$ with $q$ the pairing of $\Lambda$. It defines the Koszul $2$-periodic complex
$$
\(\Lambda^{\bullet}\Lambda^*,s\)=\{\cdots \rt{s} \Lambda^{\mathrm{even}}\Lambda^*\rt{s} \Lambda^{\mathrm{odd}}\Lambda^* \rt{s} \Lambda^{\mathrm{even}}\Lambda^*\rt{s} \cdots\}
$$
in $\MF(V,0)_{Z(s)}$. By its definition which we give later, $(\cS_E,s)$ differs from it by a twist by $\surd{\det \Lambda^*} \cong \surd{\det T^*_V\otimes \surd{\det T_{L}}}$. As studied in \cite{PV:A, KO, OS, OT1}, the $K$-theory class of the cohomology of $(\Lambda^{\bullet}\Lambda^*,s)\otimes \surd{\det T_L^*}$\vspace{-0.5mm}, which is $(\cS_E,s)\otimes \surd{\det T^*_{V}}$, equals $\widehat{\cO}^{\vir}_{Z(s)}$.

\subsection*{Globalisation} A typical idea of gluing $Z(s)$ is to replace the ambient space $V$ by the cone $i:C:=C_{Z(s)/V}\into E$. Then the diagram \eqref{p1} can be replaced by another one using $C$ instead of $V$,
\begin{align}\label{p2}
\xymatrix@C=13mm{
&\MF (E,q) \ar[r]^-{(\cS_E,\tau)} \ar[d]_-{i^*}&\MF (E,0)_{V} \ar[d]^-{i^*}& \\
\MF (U,f)\ar@{-->}[r]^-{\exists\ ?}&\MF(C,q|_C) \ar[r]^-{(\cS_E,\tau)}&\MF (C,0)_{Z(s)}\ar[r]^-{h^+\oplus h^-[1]} & D^{\Z/2}_{Z(s)}(V),
}
\end{align}
except for the first functor $\MF(U,f)\to \MF(C,q|_C)$ in the bottom row. Constructing this first functor for globalisation was our main difficulty to have the theorem. Together with the twisted pullback $\widehat{p}^*:\MF(U,f)\to \MF(V,p^*f)$, the specialisation
$$
\mathrm{sp}:\MF(V,p^*f)\To \MF(C,q|_C)
$$ 
which we will construct gives a solution of this problem. 

\subsection*{Specialisation}
To state our solution in a general setup, we consider the following situation. Let $Z\to Y$ be a morphism between Artin stacks and let $f$ be a function on $Y$. We say that the function $f$ lies in $I_{Z/Y}^2$\vspace{-0.2mm} if for any\footnote{\label{footnote3}This condition is guaranteed around $x\in Z$ if $f\in I^2_{Z/A}$\vspace{-0.5mm} for {\em some} $A\ni x$. Suppose there are two such local embeddings $Z\subset A_1, A_2$ and $f\in I^2_{Z/A_1}$\vspace{-0.5mm}. We would like to prove $f\in I^2_{Z/A_2}$\vspace{-0.3mm}. By replacing $A_1$ by $A_1\times A_2$ if necessary, we may assume that $A_1$ smoothly surjects to $A_2$ by $\pi:A_1\to A_2$ and $\pi$ takes $Z$ to $Z$ identically. Then we can check that $\pi^{-1}I^2_{Z/A_2}=I^2_{Z/A_1}\cap \pi^{-1}\cO_{A_2}$, implying $f\in I^2_{Z/A_2}$.} local embedding $Z\subset A$ into $A$ smooth over $Y$, the pullback of $f$ to $A$ lies in $I_{Z/A}^2$, where $I_{Z/A}\subset \cO_A$\vspace{-0.5mm} is the ideal shaef of $Z\subset A$. The condition $f\in I_{Z/Y}^2$ guarantees that the function $\lambda^{-2}f$ is defined on the deformation $M^o_{Z/Y}$\vspace{-0.5mm} from $Y$ to the intrinsic normal cone $C:=C_{Z/Y}$.\vspace{-0.5mm} Here $\lambda$ is the deformation parameter, $M^o_{Z/Y}\to \Spec \C[\lambda]$\vspace{-0.5mm}. By abusing notations, we denote by $\lambda^{-2}f$ the pullback function on $C$. 

\begin{thm*}[Theorem \ref{thm: spfunctor}] Let $Z\to Y$ be a morphism from a quasi-compact stack $Z$ with affine diagonal to a smooth stack $Y$; and let $f$ be a function on $Y$ lying in $I_{Z/Y}^2$. Then we can construct a triangulated functor
$$
\mathrm{sp}: \MF (Y,f)\To \Coh (C,\lambda^{-2}f).
$$
Here $\MF$ denotes the category of {\em locally free} factorisations whereas $\Coh$ is the {\em coherent} one. {\em (}\!We explain the precise statement after setting concrete notations in Section \ref{sect:sp}.{\em )}
\end{thm*}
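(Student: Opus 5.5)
We would construct $\mathrm{sp}$ as a composite of a pullback to the deformation space and a derived restriction to its special fibre, mirroring the usual specialisation map in $K$-theory or Borel--Moore homology. Let $M^o:=M^o_{Z/Y}\to\AA^1=\Spec\C[\lambda]$ be the deformation to the normal cone, with general fibre $M^o|_{\lambda\ne0}\cong Y\times\mathbb{G}_m$ and special fibre $M^o|_{\lambda=0}=C$. Since $f\in I^2_{Z/Y}$, the function $F:=\lambda^{-2}f$ is regular on $M^o$ and restricts to $\lambda^{-2}f$ on $C$. The plan has three steps:
\begin{align*}
\MF(Y,f)\ &\rt{\ \pi^*\ }\ \MF\big(Y\times\mathbb{G}_m,\ \lambda^{-2}f\big) \\
&\rt{\ \text{extend}\ }\ \Coh(M^o,F)\ \rt{\ Li_0^*\ }\ \Coh(C,\lambda^{-2}f).
\end{align*}
The first arrow is pullback followed by the rescaling isomorphism $(\Fd,d)\mapsto(\Fd,\ d^0,\ \lambda^{-2}d^1)$. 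Because $d^0d^1=d^1d^0=f$, this is a factorisation of $\lambda^{-2}f$ on $Y\times\mathbb{G}_m$; equivalently, the $\mathbb{G}_m$-action by $\lambda$ relates $f$ and $\lambda^{-2}f$. The third arrow is derived pullback along the Cartier divisor $C=\{\lambda=0\}\subset M^o$.

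\emph{Local construction.} The essential point is extending a factorisation from $\lambda\ne 0$ across $\lambda=0$. We would do this locally first. Choose a smooth atlas $A\to Y$ with $Z\subset A$ closed, so $f|_A\in I^2$ where $I=I_{Z/A}$. On the affine chart we would write $M^o_{Z/A}=\Spec\bigoplus_n I^n\lambda^{-n}$, with $I^n=\cO_A$ for $n\le0$. Given a locally free factorisation $(E^0\rightleftarrows E^1)$, we would take the Rees-type submodules
\[
\widetilde E^0:=\textstyle\bigoplus_n \{e\in E^0\otimes\cO[\lambda^{\pm1}] \text{ of weight } n\},
\]
filtered by the $I$-adic filtration, in such a way that $d^0$ and $\lambda^{-2}d^1$ preserve them. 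Concretely we would use a shift: put the lattice $E^0\otimes\cO_{M^o}$ on $E^0$ and the lattice $\lambda^{-1}\cdot(d^0)^{-1}(\ldots)$ on $E^1$. One symmetric choice is $\widetilde E^0=\sum_n I^n\lambda^{-n}E^0$ and $\widetilde E^1=\sum_n (d^0)^{-1}(I^{n+1})\lambda^{-n-1}$, obtained by saturation. These are coherent $\cO_{M^o}$-modules, flat over $\AA^1$ and $\lambda$-torsion free, and by construction they form a coherent factorisation of $F$. Since they are only coherent, not locally free, the target is forced to be $\Coh$, consistent with the statement.

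\emph{Independence of choices and gluing.} Since we do not want a choice-dependent lattice, we would prove that any two $\lambda$-torsion-free coherent extensions of the same factorisation on $\lambda\ne0$ have isomorphic $Li_0^*$ in $\Coh(C,\lambda^{-2}f)$. Rather than an isomorphism, we may get canonical equivalences in the Verdier quotient. The argument would be the standard one: for two lattices $\mathcal L\subset\mathcal L'$ with $\lambda^N\mathcal L'\subset\mathcal L$, filter by $\lambda$ and use that $Li_0^*$ of the $\lambda$-torsion quotients $\mathcal L'/\mathcal L$ vanishes. This holds because $\mathcal Q$ has the two-term resolution $\lambda:\mathcal Q\to\mathcal Q$, which is acyclic after restriction in the $\Z/2$ sense. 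More precisely, we would show $Li_0^*$ of a factorisation supported on $C$ that is killed by $\lambda$ is the cone of multiplication by zero, hence totally acyclic only up to a shift. So instead we would define $\mathrm{sp}$ as the functor on the localisation
\[
\Coh(M^o,F)\big/\Coh(M^o,F)_{C}\ \simeq\ \Coh\big(Y\times\mathbb{G}_m,\lambda^{-2}f\big)
\]
composed with the Gysin-type map $i_0^!$. We would verify that $i_0^!$ kills objects supported on $C$ because $i_0^!i_{0*}=\mathrm{id}\oplus[1]$-twist by the trivial normal bundle, which cancels on objects pushed forward. This gives a well-defined triangulated functor, locally on the atlas and canonically, hence it descends. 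Quasi-compactness and affine diagonal of $Z$ are what let us use a finite smooth atlas, together with the fact that $\Coh(-,w)$ satisfies smooth descent for such stacks. Descent then glues the local lattices, or the canonical functor directly, to $\MF(Y,f)\to\Coh(C,\lambda^{-2}f)$.

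The main obstacle is this well-definedness step: showing that the specialisation does not depend on the chosen extension across $\lambda=0$, and that the localisation sequence
\[
\Coh(C,\cdot)\to\Coh(M^o,F)\to\Coh\big(Y\times\mathbb{G}_m,\lambda^{-2}f\big)
\]
is an exact sequence of triangulated categories. This includes essential surjectivity up to summands, which is the extension-of-coherent-factorisations problem handled above by the Rees lattices. We would attempt it first on affine charts, using Orlov/Positselski-type localisation for coherent matrix factorisations, and only then globalise.
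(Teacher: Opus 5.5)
\textbf{There is a genuine gap.} Your construction fails at the step you flagged as the main obstacle, and it cannot be repaired at the categorical level. You claim that any two $\lambda$-torsion-free coherent extensions of the rescaled factorisation have isomorphic restrictions to $\lambda=0$, and that $Li_0^*$ factors through $\Coh(M^o,\lambda^{-2}f)/\Coh(M^o,\lambda^{-2}f)_C$. Both claims are false.

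For a factorisation $\mathcal Q$ killed by $\lambda$, the Koszul resolution $\cO\rt{\lambda}\cO$ of $\cO_C$ gives $Li_0^*i_{0*}\mathcal Q\cong \mathcal Q\oplus\mathcal Q[1]$. This is nonzero in $D^{\mathrm{abs}}\Coh(C,\lambda^{-2}f)$ whenever $\mathcal Q$ is nonzero. Your sentence ``$\id\oplus[1]$ \dots cancels'' is true only for classes in $K$-theory.

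Here is a concrete counterexample. Take $Y=Z=\Spec\C$ and $f=0$, so $M^o=\AA^1$ and $C=\Spec\C$. Take the contractible factorisation $F_\pm=\C$, $d_+=1$, $d_-=0$. Both $(\C[\lambda],\C[\lambda])$ and $(\lambda\C[\lambda],\C[\lambda])$ are $\lambda$-torsion-free coherent extensions of it across $\lambda=0$.
\begin{itemize}
\item The first restricts to $(\C,\C)$ with structure maps $(1,0)$, which is contractible.
\item The second restricts to $(\C,\C)$ with structure maps $(0,0)$, i.e.\ $\C\oplus\C[1]$, which is nonzero in the derived category of $\Z/2$-graded vector spaces.
\end{itemize}

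So your argument yields at best the classical specialisation homomorphism on $K$-groups, not a triangulated functor. To be fair, that homomorphism is all the paper uses downstream. With choice-dependent lattices you also cannot define $\mathrm{sp}$ on morphisms: a morphism of factorisations over $\lambda\neq0$, especially a non-closed one, need not preserve the lattices you picked. Nor can you glue choices over an atlas.

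\textbf{The missing idea is a canonical, functorial extension, which the paper builds by the graph construction.}
\begin{itemize}
\item For $\Fd$, let $M^o_{\Fd}$ be the closure of the graphs of $d_\pm/\lambda$ inside $Gr(r_+,F_+\oplus F_-)\times_{M^o}Gr(r_-,F_+\oplus F_-)$. This is a proper modification of $M^o$ concentrated over $\lambda=0$.
\item On $M^o_{\Fd}$ the tautological subbundles $\xi_\pm$ form a \emph{locally free} factorisation of $\lambda^{-2}f$. The reason is that the obstruction maps $\alpha_\pm$ vanish over $\lambda\neq0$, hence on the closure.
\item One sets $\mathrm{sp}(\Fd):=\rho_*\bigl(\xi_\bullet|_{C_{\Fd}}\bigr)$ for the proper map $\rho:C_{\Fd}\to C$. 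The hypotheses on $Z$ are used here, to fix \v{C}ech resolutions for $\rho_*$; you had attributed them to a finite atlas.
\item Morphisms are handled on the fibre products $M^o_{\Fd^1,\Fd^2}$ of these closures.
\item Absolute acyclicity is preserved because a contracting homotopy of $\Fd$ lifts to one of $\xi_\bullet$.
\end{itemize}
The effect is that $\rho_*\xi_\bullet$ singles out one extension; in the example above it is the contractible one. The paper therefore never needs independence of the lattice, which, as shown, does not hold.
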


Using the Theorem, the diagram \eqref{p2} can be modified to
\begin{align}\label{p3}
\xymatrix@C=13mm{
&\MF (E,q) \ar[r]^-{(\cS_E,\tau)} \ar[d]_-{i^*}&\MF (E,0)_{V} \ar[d]^-{i^*}& \\
\MF (U,f)\ar[r]^-{\mathrm{sp}\;\circ\; \widehat{p}^*}&\Coh(C,q|_C) \ar[r]^-{(\cS_E,\tau)}&\Coh (C,0)_{Z(s)}\ar[r]^-{h^+\oplus h^-[1]} & D^{\Z/2}_{Z(s)}(V).
}
\end{align}
Then taking $K$-groups of the bottom glues to give the homomorphism \eqref{Kmain}.

\subsection*{Comparison}
Let $(Q,q_Q)$ be a $SO(2m,\C)$ bundle on $U$. The quadratic form $q_Q$ defines a quadratic function on the total space $\mathrm{tot}\;Q$, and the pullback function $f$ is also defined on it. Then the two critical models $(U,f)$ and $\(\mathrm{tot}\;Q,f+q_Q\)$ give the same $(-1)$-symplectic space $M$. Hence we have two Lagrangian classes when a $(-1)$-Lagrangian $L\to M$ is given. The equivalence $\MF(U,f)\to \MF (\mathrm{tot}\;Q,f+q_Q)$, called the Kn\"orrer periodicity, allows us to compare the two Lagrangian classes.

\begin{thm*}[Theorem \ref{thm:critind}] For an even rank quadratic bundle $(Q,q_Q)$ on $U$, the two Lagrangian classes given by different critical models $(U,f)$ and $(\mathrm{tot}\;Q,f+q_Q)$ commute with the Kn\"orrer periodicity,
$$
\xymatrix{
K\(\MF(U, f)\) \ar[r]^{\text{\em Lag}} \ar[d]_{\text{\em Kn\" orrer}}& K_0\(L,\ZZ\)\\
K\(\MF(\mathrm{tot}\;Q, f+q_Q)\). \ar[ur]_{\text{\em Lag}}&
}
$$
In particular, for a $(-2)$-symplectic $L$, the Lagrangian class
$$
K\(\MF(\AA^{2m}, x_1y_1+\cdots +x_my_m)\) \To K_0\(L\)
$$
takes the Clifford factorisation to the virtual structure sheaf $\widehat{\cO}_L\in K_0\(L,\ZZ\)$.
\end{thm*}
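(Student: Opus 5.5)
We will prove the comparison by reducing to the local cutout model and checking the identity there, with all gluing done compatibly on both sides. Write $\pi:\tot\;Q\to U$ and $\tilde f=\pi^*f+q_Q$. A $(-1)$-Lagrangian $L\to M$ written over $(U,f)$ via a cutout datum $(V,p,E,q,s)$ with $q(s,s)=p^*f$ gives a cutout datum over $(\tot\;Q,\tilde f)$. Take $\tilde V:=V\times_U\tot\;Q=\tot(p^*Q)$, with $\tilde p:\tilde V\to\tot\;Q$ the projection, and $\tilde E:=E\oplus p^*Q$ on $\tilde V$ with quadratic form $q\oplus q_Q$. Take $\tilde s:=(s,\tau_Q)$, where $\tau_Q$ is the tautological section of $p^*Q$ on $\tot(p^*Q)$. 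Then $\tilde q(\tilde s,\tilde s)=\tilde p^*\tilde f$ and $Z(\tilde s)=Z(s)=L$. The first step is to check that this is the cutout datum the global construction of Theorem \ref{main2} attaches to $L$ for the model $(\tot\;Q,\tilde f)$, and that the orientation of $T_{L/\tot Q}=T_{L/U}\oplus(Q|_L\text{ shifted})$ is induced from that of $T_{L/U}$ and the $SO(2m)$-structure of $Q$. Since Theorem \ref{main2} is independent of the choice of cutout, it then suffices to compare the two compositions in \eqref{p3} for these data.

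Next we compare cones. Since $\tau_Q$ is a regular section cutting out the zero section $V\subset\tot(p^*Q)$, the normal cone $\tilde C=C_{Z(\tilde s)/\tilde V}$ equals $C\times_L Q|_L\subset E\oplus Q$. Under this identification $\tilde q|_{\tilde C}=q|_C\boxplus q_Q$.

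The key step is the compatibility of specialisation with Knörrer periodicity and Clifford factorisations. Knörrer periodicity is $\Fd\mapsto\pi^*\Fd\otimes(\cS_Q,\tau_Q)$, or its twisted variant. The claim is that
\[
(\cS_{\tilde E},\tau)\otimes\mathrm{sp}_{\tilde V}\big(\widehat{\tilde p}^{\,*}(\pi^*\Fd\otimes(\cS_Q,\tau_Q))\big)
\]
agrees in $K\big(\Coh(\tilde C,0)_{L}\big)$ with $(\cS_E,\tau)\otimes\mathrm{sp}_V(\widehat p^*\Fd)$, after taking $(\cS_Q,\tau_Q)\otimes(\cS_Q,\tau_Q)$ on $Q$.

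We plan to use three facts.
\begin{itemize}
\item The specialisation of Theorem \ref{thm: spfunctor} is functorial for smooth base change along $\tot(p^*Q)\to V$ and monoidal with respect to tensoring by factorisations specialising trivially. Here $(\cS_Q,\tau_Q)$ is homogeneous of weight one under the scaling, so its specialisation is itself.
\item $\cS_{\tilde E}\cong\cS_E\otimes\cS_Q$ as Clifford modules. This holds locally, and holds globally on classes with $\ZZ$ coefficients, by Corollary \ref{corsE}.
\item The product $(\cS_Q,\tau_Q)\otimes(\cS_Q,-\tau_Q)$, in $\Coh(\tot\;Q,0)$ supported on the zero section, has cohomology class equal to $\cO_{\text{zero section}}$ twisted by $\surd\det Q^*$ up to sign. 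This is the $(U,f)=(\Spec\C,0)$ Koszul computation of the cutout subsection applied to $Q=\Lambda\oplus\Lambda^*$ locally. The square-root twists in $\widehat{\tilde p}^*$ versus $\widehat p^*$ differ exactly by $\surd\det T^*_{\tot Q/U}=\surd\det Q^*$, which cancels it.
\end{itemize}

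Pushing forward along the zero section then reduces the $\tilde C$ expression to the $C$ expression, and $h^+\oplus h^-[1]$ commutes with this. The last statement of the theorem follows by taking $U=\Spec\C$, $f=0$, $Q=\AA^{2m}$: Knörrer sends $\cO_{\Spec\C}$ to the Clifford factorisation, and Theorem \ref{main2} sends $\cO_{\Spec\C}$ to $\widehat{\cO}^{\vir}_L$.

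The main obstacle is the monoidality/base-change property of $\mathrm{sp}$ in the first of these facts. Its construction along the deformation to the normal cone depends on resolving factorisations on $M^o_{Z/Y}$. We would first verify it at the level of $K$-groups only: extend $\pi^*\Fd\otimes(\cS_Q,\tau_Q)$ over $M^o_{\tilde Z/\tilde V}=M^o_{Z/V}\times_V\tot(p^*Q)$ by tensoring an extension of $\Fd$ with the $\lambda$-rescaled $(\cS_Q,\lambda^{-1}\tau_Q)$, and then restrict to $\lambda=0$.
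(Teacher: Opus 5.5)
Your outline has the same architecture as the paper's proof. The cone of $L$ over $\tot Q$ splits as $C\oplus Q$ with function $q\boxplus q_Q$ (Lemma \ref{1stlemma}). Specialisation commutes with Kn\"orrer periodicity (Lemma \ref{2ndlemma}). Then $\cK^{\vir}_Q\otimes\cS^{\vir}_Q[m]$ is the Koszul factorisation of $(Q,\tau_Q)$, i.e.\ $i_*\cO_C$, and the projection formula finishes.

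\textbf{Globalisation.} The way you globalise does not work. You run everything on a single cutout datum $(V,p,E,q,s)$ with $L=Z(s)$, but such data exist only locally (Joyce--Safronov). The Lagrangian class is not built from them: it is defined from a normal form $\{T\to E\to T^*\}$ of $T_{L/U}$, its Behrend--Fantechi cone $C\subset E$ and the intrinsic normal cone $C_{L/U}$. Proposition \ref{thm:indpullback} is independence of the normal form, not of a cutout. An identity in $K_0(L,\ZZ)$ cannot be proved by checking it on local models and ``gluing'', since $K$-classes do not glue. So as written you only treat $L$ admitting a global cutout.

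The fix is to state your cone comparison for normal forms. The zero section splits $T_{\tot Q}|_U\cong T_U\oplus Q$, so the map $T_{U/\tot Q}|_L[-1]\cong Q[-2]\to T_{L/U}$ vanishes. Hence $\{T\to E\oplus Q\to T^*\}$ is a normal form of $T_{L/\tot Q}$, with the same twist $\surd\det T^*$; no $\surd\det Q^*$ correction arises, and in any case $\det Q\cong\cO$. On $L\subset A\times_U\tot Q$ the conormal sheaf is $I/I^2\oplus Q^*$, with zero map to $\Omega_{A/U}|_L$ on the second summand. So the obstruction map becomes $b\oplus 1$, and the Behrend--Fantechi cone is $C\oplus Q$ globally.

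\textbf{The key step.} The step you call the main obstacle is only a plan, and the plan presupposes something unproved. $\mathrm{sp}$ is not ``extend over $M^o$ and restrict to $\lambda=0$''. It is defined through the particular graph closure $M^o_{\Fd}\subset G$ and its tautological factorisation $\xi_\bullet$. Nothing shows that an arbitrary locally free extension computes the same class; that would need an independence-of-extension argument, for instance a localisation sequence plus Gysin pullback to the central fibre, which you do not give.

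What is needed is that $\varphi^*\xi_\bullet\otimes(\cS_Q,\tau_Q/\lambda)$ is the tautological factorisation of the graph closure of $\varphi^*\Fd\otimes\cK^{\vir}_Q[m]$ inside its own Grassmannian bundle $\wt G$. The paper first shows no new blow-up occurs: $\tau_Q/\lambda$ is regular on $M^o_{L/\tot Q}$, whose fibre coordinate is rescaled by $\lambda$, so $M^o_{\varphi^*\Fd\otimes\cK}$ is the base change of $M^o_{\Fd}$. It then writes an explicit embedding $G\hookrightarrow\wt G$, $(A,B)\mapsto\big(\Psi_1(A\otimes\cK_+\oplus B\otimes\cK_-),\Psi_2(A\otimes\cK_-\oplus B\otimes\cK_+)\big)$, sending the graphs of $d_\pm/\lambda$ to those of $\delta_\pm/\lambda$. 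Taking closures gives $\wt\xi_\bullet\cong\varphi^*\xi_\bullet\otimes\cK_\bullet$. The remaining square with $\rho_*$ is the projection formula plus smooth base change.

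\textbf{Sign slip.} In your third fact, $(\cS_Q,\tau_Q)\otimes(\cS_Q,-\tau_Q)$ is a factorisation of $\pm2q_Q$, not of $0$. The correct partner is $\cS^{\vir}_Q=(\cS_Q,\sqrt{-1}\tau_Q)$. Proposition \ref{KSfan} then gives the Koszul factorisation shifted by $m$, and this shift is what absorbs the $[m]$ in Kn\"orrer periodicity.
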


\subsection*{Torus localisation}
Now suppose that a torus $\mathsf{T}:=\C^*$ acts on both $L$ and $U$ making $L\to U$ to be equivariant and $f$ to be invariant. We further assume that the $\T$-action preserves the symplectic structure on $T_{L/U}$. In this case, we can construct the $\mathsf{T}$-equivariant Lagrangian class,
$$
\mathrm{Lag}: K\(\MF^{\mathsf{T}}(U, f)\)\To K_0^{\mathsf{T}}(L)_\mathrm{loc}
$$
with the localised $K$-group $K_0^{\mathsf{T}}(L)_\mathrm{loc}$ defined in \cite[Page 54]{OT1}. Also we have a new critical model $(U^{\mathsf{T}},f^{\mathsf{T}})$, $f^{\mathsf{T}}:=f|_{U^{\mathsf{T}}}$, on the fixed loci. We will check that $f^{\mathsf{T}}$ lies in $I_{L^{\mathsf{T}}/U^{\mathsf{T}}}^2$, so that we can define
$$
\mathrm{Lag}^{\mathsf{T}}: K\(\MF^{\mathsf{T}}(U^{\mathsf{T}}, f^{\mathsf{T}})\)\To K_0^{\mathsf{T}}\(L^{\mathsf{T}}\)_\mathrm{loc}.
$$
Let $\mathfrak{e}_{\mathsf{T}}$ and $\surd\mathfrak{e}_{\mathsf{T}}$ denote the equivariant $K$-theoretic Euler class and square root Euler class defined in \cite[Page 56]{OT1}, respectively; and let $T_{L/U}^{\mathrm{mov}}$ be the moving part of the tangent complex $T_{L/U}$. Then we prove the following localisation theorem under the assumption that $L$ is a {\em scheme}.

\begin{thm*}[Theorem \ref{main4}]
Let $i_U:U^{\T}\into U$ and $i_L:L^{\T}\into L$ denote the embeddings. Then we have the following commutative diagram 
\begin{equation*}
\xymatrixcolsep{11pc}
\xymatrix{
K\(\MF^{\mathsf{T}}(U, f)\) \ar[r]^-{\mathrm{Lag}} \ar[d]_-{i_U^*} & K_0^{\mathsf{T}}(L)_\mathrm{loc} \\
K\(\MF^{\mathsf{T}}(U^{\mathsf{T}}, f^{\mathsf{T}})) \ar[r]^-{\frac{\mathrm{Lag}^{\mathsf{T}}}{\mathfrak{e}_{\T}(N_{U^{\T}/U})\cdot \sqrt{\mathfrak {e}_{\mathsf{T}}}(T_{L/U}^\mathrm{mov})}}  & K_0^{\mathsf{T}}\(L^{\mathsf{T}}\)_\mathrm{loc} . \ar[u]_-{i_{L*}}
}
\end{equation*}
In particular, when $(U,f)=(\Spec\C,0)$, this reproves the ordinary localisation theorem \cite[Theorem 7.3]{OT1}.
\end{thm*}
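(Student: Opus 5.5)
We propose to prove the localisation theorem by reducing both sides to the cutout model of \eqref{p3} and then to a localisation statement for the specialised Clifford factorisation on the cone. As a first step we construct the equivariant Lagrangian class on a $\T$-equivariant global presentation. Since $L$ is a scheme with the resolution property, we choose a $\T$-equivariant smooth $V\to U$ with $L\subset V$, together with a $\T$-equivariant orthogonal bundle $(E,q)$ of even rank and a section $s$ with $q(s,s)=p^*f$ and $Z(s)=L$. If a global cutout model is not available, we instead embed the intrinsic cone $C_{L/U}$ into an $E$ built from a $\T$-equivariant locally free resolution of $T_{L/U}$, which is self-dual up to a shift by the form given in Section \ref{sect:normalform}. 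The fixed locus data $(V^\T,E^\T,s^\T)$ is then a cutout model for $L^\T\to U^\T$, and this also shows $f^\T\in I^2_{L^\T/U^\T}$. The moving parts split as $E^{\mathrm{mov}}$, which is orthogonal and even rank, and $N_{V^\T/V}$. In $K$-theory we have $T_{L/U}^{\mathrm{mov}}=T^{\mathrm{mov}}_{V/U}-E^{\mathrm{mov}}$.

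Next we compare specialisations. We check that $\mathrm{sp}$ from Theorem \ref{thm: spfunctor} commutes with restriction to fixed loci: $i^*\circ\mathrm{sp}_{L/V}\cong \mathrm{sp}_{L^\T/V^\T}\circ i_V^*$ as functors to $\Coh(C_{L/V}|_{\text{fixed}},\lambda^{-2}f)$. This should follow by functoriality of the deformation to the normal cone, since $M^o_{L^\T/V^\T}\subset (M^o_{L/V})^\T$. Combined with $\widehat p^*$ and $i_U^*$, we use $\surd\det T^*_{V/U}|_{V^\T}=\surd\det T^*_{V^\T/U^\T}\otimes\surd\det (T^{\mathrm{mov}}_{V/U})^*$ to absorb the twist.

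Then we apply localisation on the cone. The $K$-class of $h^+\oplus h^-[1]$ of $(\cS_E,\tau)\otimes\mathrm{sp}(\cdot)$ is supported on $L$. By the ordinary $K$-theoretic localisation for the $\T$-scheme $L$ (Thomason), it equals $i_{L*}$ of its restriction to $L^\T$ divided by $\mathfrak e_\T(N_{L^\T/L})$. The essential computation is the restriction of the Clifford module: $\cS_E|_{V^\T}\cong\cS_{E^\T}\otimes\cS_{E^{\mathrm{mov}}}$. Restricting $(\cS_{E^{\mathrm{mov}}},\tau)$ to the zero section of $E^{\mathrm{mov}}$ gives, in $K$-theory, $\surd\mathfrak e_\T(E^{\mathrm{mov}})$ up to the $\surd\det$ twist, exactly as in the split case $\Lambda\oplus\Lambda^*$ treated in the introduction. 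Meanwhile, passing from $\MF(U)$ to $\MF(U^\T)$ and from $V$ to $V^\T$ produces $\mathfrak e_\T(N_{U^\T/U})$ and $\mathfrak e_\T(N_{V^\T/V})$ via self-intersection formulas. Collecting these factors, together with the identity $\mathfrak e_\T(N)=\surd\mathfrak e_\T(N)^2$ up to sign and orientation for self-dual classes, yields the divisor $\mathfrak e_\T(N_{U^\T/U})\cdot\surd\mathfrak e_\T(T^{\mathrm{mov}}_{L/U})$. The orientation of $T_{L/U}$ restricts to orientations of the fixed and moving parts, and this fixes the signs.

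We expect the main obstacle to be the compatibility of $\mathrm{sp}$ with fixed loci. The intrinsic cone $C_{L^\T/V^\T}$ is only the fixed part of $C_{L/V}|_{L^\T}$, so we must show that the specialised factorisation on $C_{L/V}$, restricted to the fixed part, agrees with the specialisation on $C_{L^\T/V^\T}$ after tensoring by the Koszul contribution of the moving cone directions. We would first try to prove this by deformation to the normal cone of $V^\T\subset V$, working on a double deformation space $\AA^2_{\lambda,\mu}$ and specialising in both parameters. The independence of order in $K$-theory would follow from flatness over $\AA^2$, as in the functoriality arguments used for virtual structure sheaves in \cite{OT1}.
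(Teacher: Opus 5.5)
Your proposal has genuine gaps, and they sit exactly where the paper's proof does its work.

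\textbf{The global model and the localisation step.} Everything after your opening paragraph uses a global $\T$-equivariant cutout model $(V,E,q,s)$ with $L=Z(s)$ and $q(s,s)=p^*f$. Neither the resolution property nor $L$ being a scheme produces such a model. The Joyce--Safronov models are only local, which is why the Lagrangian class is built from a normal form $\{T\to E\to T^*\}$ and the cone $C_{L/U}\subset[E/T]$. Your fallback to normal forms removes $V$, $V^{\T}$ and $N_{V^{\T}/V}$, and all your later steps rely on them.

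The localisation step is also not meaningful as written. $L$ is singular, so there is no restriction map $K_0^{\T}(L)\to K_0^{\T}(L^{\T})$ and no normal bundle $N_{L^{\T}/L}$. Thomason only tells you that $i_{L*}$ becomes an isomorphism after localisation; identifying the preimage is the whole virtual-localisation problem. The paper handles it with the ordinary specialisation $\mathfrak{sp}:K_0^{\T}(L)_{\loc}\to K_0^{\T}(C_{L^{\T}/L})_{\loc}$, which satisfies $\mathfrak{sp}\circ i_{L*}=0_*$. It then embeds $C_{L^{\T}/L}$ into the bundle $\overline{T}^{\mathrm{mov}}$ of Lemma \ref{repTL}, so the moving contributions come from cones rather than from a normal bundle.

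If you did have a global equivariant cutout model, the right move would be to localise on the smooth ambient $V$. Apply the refined Gysin map along $V^{\T}\into V$, with excess intersection, to $h\big((\cS_E,\sqrt{-1}s)\otimes\widehat{p}^*\Fd\big)$, after first proving that this class equals the Lagrangian class defined through $C_{L/V}$. That produces a single factor $\mathfrak{e}_{\T}(N_{V^{\T}/V})=\mathfrak{e}_{\T}(T^{\mathrm{mov}}_{V/U})\,\mathfrak{e}_{\T}(N_{U^{\T}/U})$. Your bookkeeping has two errors here:
\begin{enumerate}
\item Extracting $\mathfrak{e}_{\T}(N_{U^{\T}/U})$ and $\mathfrak{e}_{\T}(N_{V^{\T}/V})$ as independent factors double counts $N_{U^{\T}/U}$.
\item $T^{\mathrm{mov}}_{L/U}=T^{\mathrm{mov}}_{V/U}-E^{\mathrm{mov}}$ omits the term $(T^{\mathrm{mov}}_{V/U})^*$.
\end{enumerate}
The identity actually needed is $\surd\mathfrak{e}_{\T}(T^{\mathrm{mov}}_{L/U})=\mathfrak{e}_{\T}(T^{\mathrm{mov}}_{V/U})\surd\det T^{\mathrm{mov}}_{V/U}/\surd\mathfrak{e}_{\T}(E^{\mathrm{mov}})$, not $\mathfrak{e}=(\surd\mathfrak{e})^2$.

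\textbf{Compatibility of $\mathrm{sp}$ with fixed loci.} The step you flag as the main obstacle is misformulated, and in the form you state it, it is false. $C_{L^{\T}/V^{\T}}$ is only a closed subcone of the fixed part of $C_{L/V}|_{L^{\T}}$, and it can be strictly smaller. Take $L=\{y_1y_2=0\}\subset V=\AA^2$ with weights $(1,-1)$. The fibre of $C_{L/V}$ over the origin is a $\T$-fixed line, while $C_{L^{\T}/V^{\T}}$ is a point. Moreover, $\mathrm{sp}(\Fd)$ is only a coherent factorisation and cannot be restricted along a non-flat closed embedding. This is why the paper always works upstairs, on the covers $C_{\Fd}$ carrying the locally free $\xi_\bullet$.

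The real content of the paper's proof is the chain \eqref{sqarrows}:
\begin{enumerate}
\item the deformation $M^o_{L^{\T}/C_{L/U}}$ from $C_{L/U}$ to $C_{L^{\T}/C_{L/U}}$;
\item a functorial deformation $M_2$ from $C_{L^{\T}/U}$ to $C_{L^{\T}/C_{L/U}}$;
\item a second one, $M_3$, from $C_{L^{\T}/U}$ to $C_{L^{\T}/N_{U^{\T}/U}}\supset C_{L^{\T}/U^{\T}}$.
\end{enumerate}
Each is placed in explicit bundle stacks from Corollaries \ref{grep1} and \ref{grep2}, so that $f/\lambda^2=q$ holds and $\cS^{\vir}_E$ extends over the family. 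The intermediate specialisations $\mathrm{sp}_1,\mathrm{sp}_2,\mathrm{sp}_3$ are defined through closures of the tautological covers. Double deformation spaces $\overline{D}$ show that the relevant $0^!$ of structure sheaves agree, and Lemma \ref{OS220} commutes $h$ with proper pushforward.

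Your ``double deformation over $\AA^2_{\lambda,\mu}$'' points in this direction. But it does not yet specify the families, how the tautological factorisations extend over them, or why $f/\lambda^2$ and $q$ agree there, so it is not yet an argument. Only your last step matches the paper, namely its fourth commutativity: splitting $\cS_E$ along $E^{\mathrm{fix}}\oplus E^{\mathrm{mov}}$, using that the tautological section vanishes in the moving directions to get $\surd\mathfrak{e}_{\T}(E^{\mathrm{mov}})$, and applying a self-intersection formula.
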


\subsection*{Other works} 
Cao-Toda-Zhao \cite{CTZ} constructed a homomorphism from $K(\MF(U,f))$ to $K_0(L,\ZZ)$ using a similar strategy, with one notable difference in the specialisation procedure. Rather than specialising factorisations directly, they first pass to coherent sheaves on the zero locus $Z(f)$ via Orlov's functor \cite{EP, Or}, and then apply the usual specialisation to $Z(q)\subset C$ at the level of $K$-groups. After pushing forward to $E$, they showed that perfect complexes vanish so that the construction becomes well-defined and yields elements in $K(\MF(E,q))$. The authors then discussed applications to GSLM and $K$-theoretic DT4 theory. We expect that our specialisation functor in Theorem \ref{thm: spfunctor} agrees with, at the level of $K$-groups, the morphism constructed in \cite{CTZ} after pushforward along $C\into E$. Establishing this agreement would be interesting, as it amounts to a nontrivial compatibility between specialisation and Orlov’s functor.

Khan-Kinjo-Park-Safronov \cite{KKPS} have announced the constructible version of Joyce-Safronov conjecture, namely, there is a morphism of sheaves 
$$
\varphi_M|_L\To \omega_L
$$
from the sheaf of vanishing cycles on $M$ to the dualising sheaf of $L$. Taking cohomology gives a morphism from DT cohomology to the Borel-Moore homology
$$
H_{\mathrm{DT}}^*(M):= \mathbb{H}(M,\varphi_M)\To H^{BM}_*(L)\cong \mathbb{H}(L,\omega_L).
$$
Note that they did not assume $M$ to be critical locus, so their construction is fully global (for both $L$ and $M$).

\subsection*{Acknowledgements}
The authors are deeply indebted to Richard Thomas for indispensable contributions and extensive discussions that were crucial to the development of the ideas and results of this paper. 

Also we would like to thank Hyeonjun Park for several helpful suggestions that improved the formulation of the paper. We are indebted to Yalong Cao, Yukinobu Toda, and Gufang Zhao for generously sharing their earlier draft, and are particularly grateful to Yalong Cao for his kind and insightful correspondence. J. O. wishes to thank Mark Shoemaker for his valuable ~advice.

Both authors were supported by the National Research Foundation of Korea (NRF) grant funded by the Korean government (MSIT)(RS-2024-00339364). D.C was supported by the National Research Foundation of Korea(NRF) grant funded by the Korea government(MSIT)(RS-2026-25478462). J. O was supported by the New Faculty Startup Fund from Seoul National University and the POSCO science fellowship.

\section{Specialisation of matrix factorisations}\label{sect:sp}

\subsection{Matrix factorisations}\label{sect:MF}
Given a space with function, factorisations of the function form a DG category. Once we choose a suitable notion of acyclicity, we obtain its derived category as the Verdier quotient by acyclic objects. We will work with the DG category of factorisations we started with, rather than a DG enhancement of the derived category. We introduce the precise notions below. 

For a function $f: Y \to \mathbb A^1$ on an Artin stack $Y$, its \emph{factorisation} consists of a pair of $\cO_Y$-modules $(F_+, F_-)$ with morphisms $d_\pm: F_\pm \to F_\mp$, called the {\em structure maps}, such that $d_+\circ d_- = f\!\cdot\!\id_{F_-}$ and $d_-\circ d_+ = f\! \cdot \!\id_{F_+}$. We often denote it by 
$$
\Fd := 
\xymatrix{
F_+\ar@/_1pc/[r]_-{d_+} & F_- \ar@/_1pc/[l]_-{d_-}
}.
$$
Morphisms between factorisations form a $(\Z/2)$-graded complex
$$
\Hom^\bullet (\Fd^1 , \Fd^2) := \left(\Hom_{Y}^\bullet (F_\pm^1, F_\pm^2), D = [d_\pm, -] \right) \in \mathrm{Ch^{\Z/2}}.
$$
This yields a ($\Z/2$)-graded DG category $\mathrm{Fact}(Y, f)$. It is pretriangulated with the shift:
$$
\Fd := 
\xymatrix{
F_+\ar@/_1pc/[r]_-{d_+} & F_- \ar@/_1pc/[l]_-{d_-}
}
\rightsquigarrow\ \Fd[1] := 
\xymatrix{
F_-\ar@/_1pc/[r]_-{-d_-} & F_+ \ar@/_1pc/[l]_-{-d_+}
} 
$$
and the mapping cone of a closed even morphism: 
$$
\left \{
\begin{array}{c}
a: \Fd^1 \to \Fd^2\\
D(a) = 0
\end{array} 
\right \}
\ \rightsquigarrow\ \Cone(a) := 
\xymatrix@C=5mm{
F_+^2 \oplus F_-^1 \ar@/_1.2pc/[rr]_-{\begin{bsmallmatrix} d^2_+ & a_- \\ 0 &-d^1_-\end{bsmallmatrix}
} && F_-^2 \oplus F_+^1  \ar@/_1.2pc/[ll]_-{\begin{bsmallmatrix} d^2_- & a_+ \\ 0 &-d^1_+\end{bsmallmatrix}}
} 
$$
Its homotopy category, denoted by $[\mathrm{Fact}(Y,f)]$, is canonically triangulated. 
\begin{defn}
\label{defn: MFandmf}
A factorisation $\Fd$ is called a matrix factorisation if $F_\pm$ are locally free sheaves of finite rank. It forms a full subcategory
$$
\MF(Y, f)\ \subset\ \mathrm{Fact}(Y, f),
$$
called the category of matrix factorisations. Similarly, a factorisation $\Fd$ is called a coherent (resp. quasi-coherent) factorisation if $F_\pm$ are coherent (resp. quasi-coherent) sheaves. The category of coherent (resp. quasi-coherent) factorisations
$$
\Coh(Y, f)\ \subset\ \mathrm{Fact}(Y, f), \quad (\text{resp.} \;\ \mathrm{Qcoh}(Y, f)\ \subset\ \mathrm{Fact}(Y, f))
$$
is a full DG subcategory generated by coherent (resp. quasi-coherent) factorisations. We denote their homotopy categories by $[\MF(Y, f)]$,  $[\Coh(Y, f)]$ and $[\mathrm{Qcoh}(Y, f)]$ respectively. 
\end{defn}

We move on to the notion of \emph{derived categories}. Before going into the precise definitions, we explain the ideas. Since a factorisation $\Fd$ is not a complex, the usual notion of acyclicity (via its cohomology) does not make sense. Instead, we consider a \emph{short exact sequence} of factorisations 
$$
0 \To \Fd^1 \rt{a} \Fd^2 \rt{b} \Fd^3 \To 0.
$$
By its definition, both $a$ and $b$ are closed of even degree, and they induce short exact sequences of sheaves on each parity. Then its \emph {totalisation} which is defined to be
$$
\mathrm{Tot}=\mathrm{Tot} \left( \Fd^1 \xrightarrow{a} \Fd^2 \xrightarrow{b} \Fd^3 \right) \cong \Cone((b, 0): \Cone(a) \to \Fd^3)
$$ 
has the following property: $\Hom(G_\bullet, \mathrm{Tot})$ is acyclic for any $G_\bullet$. In other words, the Yoneda module of the totalisation is quasi-isomorphic to zero.  This lead to the following notion of acyclic objects. 

\begin{defn}[\cite{CFGKS, EP, Po}]
\label{defn: absacyc}
We say a matrix (resp. coherent, quasi-coherent) factorisation $\Fd$ is \emph{absolutely acyclic} if it is contained in the smallest thick pretriangulated subcategory generated by the totalisations of short exact sequences of matrix (resp. coherent, quasi-coherent) factorisations. We denote such subcategories by 
$$
\mathrm{abs}(\MF(Y, f)) \subset \MF(Y, f) 
$$ 
(resp. $\mathrm{abs}(\Coh(Y, f)) \subset \Coh(Y, f)$, $\mathrm{abs}(\mathrm{Qcoh}(Y, f)) \subset \mathrm{Qcoh}(Y, f)$).
\end{defn}

\begin{defn}
The \emph{absolute derived category of matrix factorisations} is defined as a Verdier quotient
$$
D^{\mathrm{abs}}\MF(Y, f) := [\MF(Y, f)] / [\mathrm{abs}(\MF(Y, f))]
$$
Similarly, the \emph{absolute derived category of coherent (resp. quasi-coherent) factorisations} is defined as a Verdier quotient
$$
D^{\mathrm{abs}}\Coh(Y, f) := [\Coh(Y, f)] / [\mathrm{abs}(\Coh(Y, f))]
$$
(resp. $D^{\mathrm{abs}}\mathrm{Qcoh}(Y, f) := [\mathrm{Qcoh}(Y, f)] / [\mathrm{abs}(\mathrm{Qcoh}(Y, f))]$).
\end{defn}
The natural inclusions $\MF(Y, f) \subset \Coh(Y, f) \subset \mathrm{Qcoh}(Y, f)$ induce fully faithful embeddings between the derived categories \cite[Corollary 2.3 (i),(k)]{EP},
\beq{Eqeq}
D^{\mathrm{abs}}\MF(Y, f) \hookrightarrow D^{\mathrm{abs}}\Coh(Y, f) \hookrightarrow D^{\mathrm{abs}}\mathrm{Qcoh}(Y, f).
\eeq
In the following, we define a flexible enlargement of $\Coh(Y,f)$ which we will use throughout the paper.
\begin{defn}
We denote by
$$
\mf(Y,f) \subset \mathrm{Qcoh}(Y, f)
$$ 
the full DG subcategories generated by objects whose images in $D^{\mathrm{abs}}\mathrm{Qcoh}(Y, f)$ are contained in the essential image of $D^{\mathrm{abs}}\Coh(Y, f)$. We define a subcategory 
$$
\mathrm{abs}(\mf(Y,f)):=\mf (Y,f)\cap \mathrm{abs}(\mathrm{Qcoh}(Y,f))
$$ 
of absolutely acylic objects and its derived category $D^{\mathrm{abs}}\mf(Y,f)$ as a Verdier quotient $[\mf (Y,f)]/\mathrm{abs}(\mf(Y,f))$. 
\end{defn}
The enlargement $\Coh(Y, f) \subset \mf(Y,f)$ leaves its derived category unchanged, as it induces a quasi-equivalence 
$$
D^{\mathrm{abs}}\Coh(Y, f) \cong D^{\mathrm{abs}}\mf(Y, f).
$$
Indeed, \eqref{Eqeq} tells us that $D^{\mathrm{abs}}\Coh(Y, f)$ is a full subcategory of $D^{\mathrm{abs}}\mf(Y, f)$, and the inclusion is essentially surjective. On the other hand, $\mf(Y, f)$ allows a quasi-coherent representation of its object which makes it more convenient to work with.

There is yet another notion of acyclicity that we will use throughout the paper. We say a factorisation $\Fd$ is {\em locally contractible} if there is an open cover $\{U_i\}$ of $Y$ such that $\Fd\vert_{U_i}$ is null-homotopic. The following lemma shows that the two notions are equivalent for matrix factorisations, allowing us to freely switch between them in the rest of the paper.
\begin{lem}
\label{lem: coeqcon}
The followings are equivalent for a matrix factorisation $\Fd$:
\begin{enumerate}
\item $\Fd$ is absolutely acyclic.
\item $\Fd$ is locally contractible.
\end{enumerate}
\end{lem}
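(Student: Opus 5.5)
We prove the two implications separately; the direction (1)$\Rightarrow$(2) is formal, and (2)$\Rightarrow$(1) carries the content.

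\emph{(1)$\Rightarrow$(2).} Let $\mathcal L\subset \MF(Y,f)$ be the full subcategory of locally contractible matrix factorisations. We check three things:
\begin{itemize}
\item $\mathcal L$ is closed under shifts and cones.
\item $\mathcal L$ is closed under homotopy equivalence and direct summands, so it is thick.
\item $\mathcal L$ contains the totalisation of every short exact sequence $0\to \Fd^1\to\Fd^2\to\Fd^3\to 0$ of matrix factorisations.
\end{itemize}
For cones, given two locally contractible objects we take a common refinement of the two covers; a cone of a morphism between null-homotopic objects is null-homotopic. For the totalisation, we pass to an affine (or smooth-locally affine, for stacks) open $U_i$. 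There each $F^3_\pm$ is projective, so the sequence of components splits. The splitting can be chosen compatibly with parity, so $\Fd^2|_{U_i}$ is isomorphic to a cone of a closed morphism $\Fd^3[-1]\to \Fd^1$, with the twisting term absorbing the failure of the splitting to commute with $d_\pm$. Then $\mathrm{Tot}|_{U_i}$ is the totalisation of a split triangle, which is contractible by the usual explicit homotopy. Since $\mathrm{abs}(\MF(Y,f))$ is the smallest thick subcategory containing these totalisations, it lies in $\mathcal L$.

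\emph{(2)$\Rightarrow$(1).} Suppose $\Fd|_{U_i}$ is null-homotopic for a cover $\{U_i\}$. By quasi-compactness we may take the cover finite and, for stacks, work with a smooth atlas where necessary. The plan is a \v{C}ech resolution argument.
\begin{enumerate}
\item Form the \v{C}ech complex $\Fd\to \prod_i j_{i*}\Fd|_{U_i}\to\prod_{i<k} j_{ik*}\Fd|_{U_{ik}}\to\cdots$. Each term is a quasi-coherent factorisation, and the complex is an exact sequence of factorisations.
\item Its totalisation is built from totalisations of short exact sequences of quasi-coherent factorisations, hence it is absolutely acyclic in $\mathrm{Qcoh}(Y,f)$. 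Therefore $\Fd$ is isomorphic in $D^{\mathrm{abs}}\mathrm{Qcoh}(Y,f)$ to the totalisation of the remaining terms.
\item Each remaining term $j_*(\Fd|_{U})$ is the pushforward of a contractible factorisation, hence contractible, since pushforward is a DG functor. A finite totalisation of contractible objects is zero in $D^{\mathrm{abs}}\mathrm{Qcoh}(Y,f)$, so $\Fd\cong 0$ there. For affine diagonal the $j_*$ are exact, so no derived correction is needed; for stacks we use the smooth atlas and descent of absolute acyclicity.
\item By the fully faithful embeddings \eqref{Eqeq}, $\Fd\cong 0$ in $D^{\mathrm{abs}}\MF(Y,f)$. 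An object that vanishes in a Verdier quotient lies in the thick closure of the kernel, and $\mathrm{abs}(\MF(Y,f))$ is thick by definition. Hence $\Fd$ is absolutely acyclic.
\end{enumerate}

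The main obstacle is step (2)$\Rightarrow$(1), and specifically justifying the \v{C}ech step when $Y$ is an Artin stack rather than a separated scheme. The issue is ensuring the pushforwards stay in $\mathrm{Qcoh}$ with exact \v{C}ech sequences. We would first reduce to the case where the cover is by a smooth atlas and use flat descent, relying on \cite{EP} for the comparison \eqref{Eqeq}.
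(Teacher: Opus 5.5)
\textbf{(1)$\Rightarrow$(2).} This is the paper's argument. You split the components locally, recognise $\Fd^2$ as a cone of the extension class, contract the totalisation explicitly, and use that local contractibility is closed under shifts, cones, sums and summands.

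\textbf{(2)$\Rightarrow$(1), scheme case.} Here you take a different route from the paper. The paper quotes two facts: coacyclicity coincides with absolute acyclicity for matrix factorisations \cite[Section 2.4]{EP}, and coacyclicity is local in the smooth topology \cite[Proposition 2.2.6]{CFGKS}. This reduces it to a globally contractible $\Fd$, which is a summand of $\Cone(\id_{\Fd})$. Your argument is sound when $Y$ is a quasi-compact scheme and the cover is a finite Zariski cover with quasi-compact inclusions. The alternating \v Cech complex is then a finite exact complex of quasi-coherent factorisations. Its terms after $\Fd$ are contractible, since you can apply $j_*$ to the homotopy. So $\Fd\cong 0$ in $D^{\mathrm{abs}}\mathrm{Qcoh}(Y,f)$, and \eqref{Eqeq} together with thickness of $\mathrm{abs}(\MF(Y,f))$ puts $\Fd$ in $\mathrm{abs}(\MF(Y,f))$. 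What this buys is a hands-on proof that uses only \eqref{Eqeq}.

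\textbf{The gap: Artin stacks and smooth covers.} This is exactly the case the paper needs. The lemma is applied, for example, to $\xi_\bullet$ on $M^o_{\Fd}$ in the proof of Theorem \ref{thm: spfunctor}. There contractibility is only available after pulling back along a cover of the Artin stack $Y$.
\begin{itemize}
\item For a smooth atlas $u\colon U\to Y$ that is not a monomorphism, $U\times_Y U\neq U$. The alternating trick fails and the \v Cech nerve never terminates.
\item The augmented complex $\Fd\to u_*u^*\Fd\to u^{(2)}_*u^{(2)*}\Fd\to\cdots$ is therefore an unbounded exact complex.
\item $\mathrm{abs}(\mathrm{Qcoh}(Y,f))$ is a thick subcategory, closed only under finite operations. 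Nothing in its definition makes this infinite totalisation absolutely acyclic.
\end{itemize}
So your steps 2 and 3 break down. Your appeal to ``descent of absolute acyclicity'' restates what has to be proved; it is not an available tool.

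\textbf{The missing idea.} Replace absolute acyclicity by coacyclicity, which is closed under infinite direct sums. The direct-sum totalisation of the whole \v Cech resolution is a telescope of totalisations of its finite canonical truncations, which are absolutely acyclic. The totalisation of the part after $\Fd$ is a telescope of contractible factorisations. Both are coacyclic, hence so is $\Fd$. You then need that coacyclic matrix factorisations are absolutely acyclic \cite[Section 2.4]{EP}. This is precisely the input the paper imports, together with smooth-locality of coacyclicity from \cite[Proposition 2.2.6]{CFGKS}. With it your \v Cech argument can be completed; as written it only covers the Zariski-local scheme case.
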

\begin{proof} 
($1\Rightarrow 2$)
Suppose $\Fd = \mathrm{tot}(\Fd^1 \to \Fd^2 \to \Fd^3)$ is the totalisation of a short exact sequence. After passing to a sufficiently small open cover, we can assume 
$$
F^2_\pm = F^1_\pm \oplus F^3_\pm, \quad d^2_\pm = 
\begin{pmatrix} d^1_{\pm} & 0 \\ \phi_\pm & d^3_{\pm} \end{pmatrix}.
$$ 
where $\phi_\bullet: \Fd^3 \to \Fd^1[1]$ is the morphism defining the extension. Then the totalisation $\Fd=\mathrm{tot}(\Fd^1 \to \Fd^2 \to \Fd^3)$ becomes  
\begin{equation*}
\Fd=\Fd^1[1] \oplus \Fd^1 \oplus \Fd^3 \oplus \Fd^3[1] ,\quad  d=\begin{pmatrix}
d^1[1] & 0 & 0& 0\\ 
1& d^1 & 0 & 0\\
0 &\phi_\bullet & d^3 & 0\\
0 & 0 & 1 & d^3[1]
\end{pmatrix}.
\end{equation*}
It is null-homotopic with explicit contracting homotopy 
$$h=
\begin{pmatrix}
0 & 1 & 0& 0\\ 
0& 0 & 0 & 0\\
0 &0 & 0 & 1\\
0 & 0 & 0 & 0
\end{pmatrix}:\Fd\to \Fd[1].
$$
Local contractibility is stable under taking shifts, mapping cones, and direct sums. It also descends to its summand. Indeed, if $\Fd = \Fd^1 \oplus \Fd ^2$ with a contracting homotopy $h=h_{ij}, (i, j = 1, 2)$, then it is easy to check that $h_{ii}$ is a contracting homotopy of $\Fd^i$ for $i=1,2$.

($2\Rightarrow 1$)
The notion of coacyclicity, which is equivalent to the absolute acyclicity for matrix factorisations by \cite[Section 2.4]{EP}, is local in a smooth topology \cite[Proposition 2.2.6]{CFGKS}. Therefore we may assume that $\Fd$ is globally contractible by the homotopy $h$. Then the morphism
$$
\Fd \xrightarrow{h\oplus 1} \mathrm{Cone}(\Fd \rt{1} \Fd)\cong \mathrm{Tot}(0 \rt{0} \Fd \rt{1} \Fd )
$$
splits. Therefore $\Fd$ is absolutely acyclic.
\end{proof}

\subsection*{Support} Let $Z \subset Y$ be a closed substack. We say a matrix factorisation $\Fd$ is {\em supported on}  $Z$ if it is absolutely acyclic on $Y \take\, Z$. Matrix factorisations supported on $Z$ form a pretriangulated subcategory of $\MF(Y, f)$, which we denote by $ \MF(Y, f)_Z$. Similarly, we can define a pretriangulated subcategory $\mf(Y,f)_Z\subset \mf(Y,f)$.

\subsection*{Pullback} Let $\pi: X \to Y$ be a morphism of stacks. We can define the (derived) pullback $\mf(Y,f)\to \mf(X,\pi^*f)$ when $Y$ has enough $\pi^*$-adapted objects. We don't have to derive it when we work with matrix factorisations. In this case, we always have the pullback
\begin{equation*}
\pi^* : \MF(Y, f) \To \MF(X, \pi^*f),  \quad \Fd\! \Mapsto\! \pi^*\Fd,
\end{equation*}
which preserves the pretriangulated structure. It takes the subcategory $\MF(Y,f)_Z$ to $\MF(X,\pi^*f)_{Z\times_Y X}$. In particular, when $Z=\emptyset$, $\pi^*$ takes absolute acyclic objects to absolute acyclic objects.

When $\pi$ is flat, we can also define the pullback
\begin{equation*}
\pi^* : \mf(Y, f)_Z \To \mf(X, \pi^*f)_{Z\times_Y X},  \quad \Fd\! \Mapsto\! \pi^*\Fd,
\end{equation*}
without deriving it since any object in $\mf (Y,f)$ is $\pi^*$-adapted. It is also pretriangulated, and preserves acyclicity.

\subsection*{Tensor product}
Given a matrix factorisation $G_\bullet\in \MF(Y,g)_Z$ supported on $Z$, the tensor product gives a pretriangulated functor
\begin{equation*}
-\otimes G_\bullet: \mf(Y, f)_W \To \mf(Y, f+g)_{Z\cap W}, \quad \Fd \Mapsto \Fd \otimes G_\bullet. 
\end{equation*}
In particular, when $W=\emptyset$, it preserves absolute acyclicity.

\subsection*{Pushforward}
Let $X$ be a quasi-compact Artin stack with affine diagonal. Then a proper morphism $\pi:X\to Y$ to an Artin stack $Y$ defines the (derived) pushforward on quasi-coherent factorisations using \v Cech resolutions. Following the idea of \cite[Proposition 10.6.2]{DG}, this pushforward restricts to the subcategories $\mf\subset \mathrm{Qcoh}$,
$$
R\pi_*: \mf (X,\pi^*f)\To \mf (Y,f),
$$
and preserves the absolute acylicity \cite[Section 3.5]{EP}. Throughout the paper, we omit the symbol $R$ and write $\pi_*$ in place of $R\pi_*$.

\subsection*{$K$-groups} Let $K(\MF(Y, f))$ denote the Grothendieck group of its derived category $D^{\mathrm{abs}}\MF(Y, f)$. A pretriangulated DG functor preserving the absolute acyclicity induces a homomorphism between $K$-groups,
$$
\MF(Y, f) \To \MF(Y', f')\ \rightsquigarrow\ K(\MF(Y, f)) \To K(\MF(Y', f')).
$$ 
Similar story holds for $K(\mf(Y, f))$. In particular, the pullback, tensor product and pushforward functors introduced above induce the morphisms of $K$-groups.

\subsection{Specialisation}\label{Sect:sp}
Now let $Z\to Y$ be a morphism from a quasi-compact Artin stack $Z$ with affine diagonal to a smooth Artin stack $Y$. Let $f$ be a function on $Y$ lying in $I_{Z/Y}^2$. Recall from the Introduction that $f\in I_{Z/Y}^2$\vspace{-0.3mm} means that for any local closed embedding $Z\subset A$ with $A$ smooth over $Y$, the pullback of $f$ lies in $I_{Z/A}^2$. 

We consider the deformation $M^o\to \AA^1=\Spec \C[\lambda]$ from $Y$ to the normal cone $C$. The condition $f\in I_{Z/Y}^2$\vspace{-0.3mm} guarantees that $q:=f/\lambda^2$ becomes a function on $M^o$. By abusing notation, we also denote by $q$ its restriction to the central fibre $C$. Then we would like to construct a functor
\beq{YMo}
\MF(Y,f)\To \mf(C,q),
\eeq
using the space $M^o$.

The example below illustrates what we are going to do na\"ively.

\subsection*{Example} Suppose that $Z=\{0\}\subset Y=\C^2_{x,y}$ and $f=xy$. Then one can associate to a factorisation
$$
\xymatrix{
\cO_{\C^2}\ar@/_1pc/[r]_-{d_+=x} & \cO_{\C^2} \ar@/_1pc/[l]_-{d_-=y} \hspace{-5mm} & \in \quad \MF(Y,f),
}
$$
another one 
$$
\xymatrix{
\cO_{M^o}\ar@/_1pc/[r]_-{d_+=x/\lambda} & \cO_{M^o} \ar@/_1pc/[l]_-{d_-=y/\lambda} \hspace{-5mm} & \in \quad \MF(M^o,q),
}
$$
as $x/\lambda$, $y/\lambda$ are functions on $M^o$. We obtain a factorisation in $\MF (C,q)$ by restricting it to $C$. This looks to suggest \eqref{YMo} can be taken by replacing $d_\pm$ by $d_\pm/\lambda$. But if we consider a different example $(d_+,d_-)=(xy,1)$, then the suggestion no longer works since $1/\lambda$ is not a function on $M^o$. So we need a remedy of this suggestion. However, we emphasise that the precise construction is based on this simple idea.

\subsection*{Object to object via graph construction}  
A key idea of the remedy (to the suggestion in the above Example) is to take the blowup of $M^o$ along the locus where $d_\pm/\lambda$ are not defined. Although this depends on the object $\Fd$, the blowup locus always lies entirely in the central fibre $C$. Recall that a blowup may be described as the closure of a rational section of a projective bundle. Following this idea, we describe our blowup as the closure of a rational section of a Grassmannian bundle over $M^o$. Then the advantage is that we can use the tautological bundles to associate to $\Fd\in \MF(Y,f)$ the specialisation $\mathrm{sp}(F_\bullet)\in \mf (C,q)$. Now we provide a precise construction.

Set $r_+$ and $r_-$ be the ranks of $F_+$ and $F_-$ respectively. Considering them as bundles on $M^o$, we let $G$ be the product of two Grassmannians $Gr(r_+,F_+\oplus F_-)\times_{M^o} Gr(r_-,F_+\oplus F_-)$ over $M^o$. We have a rational section of $G$ defined on $Y\times (\AA^1\take\{0\})\subset M^o$,
\beq{lce}
Y\times (\AA^1\take\{0\})\ \into\ G,\ \ (x,\lambda)\Mapsto \left(\Gamma_{\frac{d_+(x)}{\lambda}}, \Gamma_{\frac{d_-(x)}{\lambda}}, \lambda\right) 
\eeq
given by the graphs of $d_+/\lambda$ and $d_-/\lambda$. It is the dotted arrow in the below diagram,
$$
\xymatrix@R=12mm{
& G|_{\lambda\neq 0} \ar[d] \ar@{^(->}[r] & G \ar[d]  \\
Y\times (\AA^1\take\{0\}) \ar@{=}[r] \ar@{^(->}[ru]^-{\Gamma_{d/\lambda}} \ar@{-->}[rru]_-{\eqref{lce}}& Y\times (\AA^1\take\{0\}) \ar@{^(->}[r]& M^o .
}
$$ 
We let 
\beq{MFd}
M^o_{\Fd}\subset G\quad\text{and}\quad C_{\Fd}\subset G|_{\lambda=0}
\eeq
be the closure (or equivalently, the scheme theoretic image) of \eqref{lce} and its restriction to $\lambda=0$, respectively. Hence $M^o_{\Fd}$ is the closure of the image inside $G$. Since $Y$ is smooth, the closure $M^o_{\Fd}$ is integral. We have the projection morphisms $M^o_{\Fd}\to M^o$ and $C_{\Fd}\to C$, which are proper. 

\begin{ex} \label{ex1}Suppose $f=0$. Consider the factorisation $\Fd:=\cO_Y$ of $f=0$ with $F_+:=\cO_Y$, $F_-:=0$. By convention, we set $Gr(0,F_+\oplus F_-):=Y$. Then we have $G\cong M^o\cong M^o_{\Fd}$ and $C\cong C_{\Fd}$. 
\end{ex}

Over $M^o_{\Fd}$, we have the pullbacks of tautological bundles $\xi_+$ and $\xi_-$ from $G$. We would like to make it to a factorisation $\xi_\bullet \in \MF(M^o_{\Fd},q)$. To do so, we consider two composition maps for each parity,
$$
\alpha_{\pm}:\xi_\pm\subset F_+\oplus F_-\xrightarrow[1\oplus \;q]{q\;\oplus 1} F_+\oplus F_-\onto (F_+\oplus F_-)/\xi_\mp.
$$
This means that $\alpha_+$ is defined using $q\oplus 1$ in the middle map, while $\alpha_-$ uses $1\oplus q$ according to the parity. On the common zero $Z(\alpha):=Z(\alpha_+)\cap Z(\alpha_-)$, the maps $\xi_\pm\to F_+\oplus F_-$ factor through $\xi_\pm\to \xi_{\mp}$. Hence these define a subfactorisation $\xi_\bullet$ of
\beq{framefact}
\xymatrix{
F_+\oplus F_-\ar@/_1pc/[r]_-{q\;\oplus\; 1} & F_+\oplus F_- \ar@/_1pc/[l]_-{1\;\oplus\; q} &\hspace{-8mm} \text{in }\ \MF(Z(\alpha),q),
}
\eeq
cf. \cite[Section 2]{Oh}. So the following Lemma tells us that we obtain a factorisation $\xi_\bullet \in \MF(M^o_{\Fd},q)$.
\begin{lem}
\label{lem: vanish}
The morphisms $\alpha_\pm$ vanish on $M^o_{\Fd}$.
\end{lem}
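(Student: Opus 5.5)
Since $\alpha_\pm$ are maps between locally free sheaves on $M^o_{\Fd}$ and $M^o_{\Fd}$ is integral (it is the closure of the image of the integral space $Y\times(\AA^1\take\{0\})$, as noted after \eqref{MFd}), it suffices to show that $\alpha_\pm$ vanish on a dense open subset. A morphism of locally free sheaves on an integral stack that vanishes generically vanishes identically: its image is a subsheaf of a torsion free sheaf and is generically zero, hence zero. Equivalently, the zero locus $Z(\alpha_\pm)$ is a closed substack containing the dense open set, so it contains the scheme-theoretic closure. The natural dense open is the image of the section \eqref{lce} over $\lambda\neq 0$. Over that locus $M^o_{\Fd}$ is identified with $Y\times(\AA^1\take\{0\})$, and $\xi_+=\Gamma_{d_+/\lambda}$, $\xi_-=\Gamma_{d_-/\lambda}$.

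The key step is a direct computation on this open set. A local section of $\xi_+$ has the form $(u,\,d_+u/\lambda)$ with $u\in F_+$. Applying $q\oplus 1$ gives
$$
\(qu,\ d_+u/\lambda\).
$$
We must show that this lies in $\xi_-=\{(d_-w/\lambda,\,w): w\in F_-\}$. Take $w=d_+u/\lambda$. Then
$$
d_-w/\lambda=d_-d_+u/\lambda^2=fu/\lambda^2=qu,
$$
so the vector does lie in $\xi_-$ and $\alpha_+=0$ there. Symmetrically, for a section $(d_-w/\lambda,\,w)$ of $\xi_-$, applying $1\oplus q$ gives $(d_-w/\lambda,\,qw)$. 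Using $d_+d_-=f$, this equals the graph point over $u=d_-w/\lambda$, so $\alpha_-=0$ as well. I would be careful about the ordering conventions in the definition of $\alpha_\pm$, since the summand ordering in $F_+\oplus F_-$ determines which graph each maps to. I would also check that the two factorisation identities $d_\mp d_\pm=f$ are used exactly once each.

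The only genuine subtlety is the passage from the open set to the whole of $M^o_{\Fd}$, in particular near $\lambda=0$, where $q=f/\lambda^2$ is regular only because $f\in I^2_{Z/Y}$. So $\alpha_\pm$ is a genuine morphism of vector bundles on all of $G$, and its vanishing locus is closed. Since $M^o_{\Fd}$ is by definition the scheme-theoretic image, it factors through any closed substack containing the image of \eqref{lce}. This gives the vanishing without any further reducedness input.

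For stacks, I would check the statement smooth-locally on an atlas of $M^o$. The graph construction commutes with smooth base change, and scheme-theoretic image is compatible with flat base change, so this reduction is harmless.
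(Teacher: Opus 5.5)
Your proposal is correct and follows essentially the same route as the paper. The paper also checks on $\lambda\neq 0$ that $q\oplus d_+/\lambda$ lands in the graph of $d_-/\lambda$, using $q=f/\lambda^2=(d_-/\lambda)\circ(d_+/\lambda)$ (and symmetrically for $\alpha_-$), and then extends the vanishing to all of $M^o_{\Fd}$ by closure. Your justification of that last step, that the scheme-theoretic image factors through the closed zero locus of $\alpha_\pm$ on $G$, spells out what the paper leaves implicit.
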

\begin{proof}
We compute $\alpha_+$ first at $\lambda\neq 0$. The first embeddings $\xi_+\subset F_+\oplus F_-$ defining $\alpha_+$ is
$$
\xi_+\cong F_+\ \xrightarrow{1\oplus\; \frac{d_+}{\lambda}}\ F_+\oplus F_-.
$$
Composing it with $F_+\oplus F_-\rt{q\;\oplus 1} F_+\oplus F_-$ becomes
$$
\xi_+\cong F_+\ \xrightarrow{q\;\oplus\; \frac{d_+}{\lambda}}\ F_+\oplus F_-.
$$
The image lies in the graph of $d_-/\lambda$ because $q=f/\lambda^2=d_-/\lambda\!\circ\!\; d_+\lambda$, which proves $\alpha_+\!=0$ on $M^o_{\Fd}|_{\lambda\neq 0}$. Since $M^o_{\Fd}$ is the closure, $\alpha_+=0$ on here. Similarly we can prove $\alpha_-\!=0$.
\end{proof}

Pulling $\xi_\bullet\in \MF(M^o_{\Fd},q)$ back to $\MF (C_{\Fd},q)$ via central fibre $C_{\Fd}\into M^o_{\Fd}$ and pushing it down to $\mf (C,q)$ via the proper morphism $\rho: C_{\Fd}\to C$, we obtain a specialisation 
\beq{defsp}
\mathrm{sp}(F_\bullet):=\rho_*\xi_\bullet \in \mf (C,q).
\eeq
Note that the pushforward $\mf (C_{\Fd},q)\to \mf (C,q)$ is defined once we specify the \v Cech resolutions of objects in $\mf (C_{\Fd},q)$. Note that this is the only place where we use the condition on $Z$; quasi-compact with affine diagonal. Hence, for each $\Fd$, we have an a priori chosen resolution of $\xi_\bullet \in \MF (C_{\Fd},q)$ to define $\mathrm{sp}(F_\bullet) \in \mf (C,q)$. One requirement for these a priori chosen \v Cech resolutions is that they are shifts of each other for $\Fd$ and $\Fd[1]$. We achieve this requirement using the fact that $M_{\Fd}$ is isomorphic to $M_{\Fd[1]}$.

\begin{ex} \label{ex2} The factorisation $\cO_Y$ in Example \ref{ex1} has the specialisation $\mathrm{sp}(\cO_Y)=\cO_{C}$. Here, since $C_{\Fd}=C$, we don't need to resolve $\cO_{M^o}$ to get $\mathrm{sp}(\cO_Y)=\cO_{C}$.
\end{ex}

\subsection*{Morphism to morphism}
Now we associate to a morphism $a: \Fd^1\to \Fd^2$ in $\MF (Y,f)$ its specialisation $\mathrm{sp}(a): \mathrm{sp}(F^1_\bullet)\to \mathrm{sp}(F^2_\bullet)$ in $\mf (C,q)$. 

We first construct it for an even morphism 
$$
a=(a_{++}, a_{--}) \in \Hom(F_+^1, F_+^2) \oplus \Hom(F_-^1, F_-^2).
$$
For each parity, consider morphisms of tautological bundles on the fibre product $M^o_{\Fd^1}\times_{M^o} M^o_{\Fd^2}$, 
{\setlength{\jot}{-1pt}
\begin{align} \label{defofA}
& a'_{++}:\; \xi^1_+\ \into F^1_+\oplus F^1_-\rt{A_{++}} F^2_+\oplus F^2_-\Onto (F^2_+\oplus F^2_-)/\xi^2_+,\\ 
& a'_{--}:\; \xi^1_-\ \into F^1_-\oplus F^1_+\rt{A_{--}} F^2_-\oplus F^2_+ \Onto (F^2_-\oplus F^2_+)/\xi^2_-, \nonumber
\end{align}
}%
where $A_{\pm\pm}$ are defined using $a$,
\beq{matrixAA}
A_{++} = 
\begin{pmatrix}a_{++} & 0 \\ 
\frac{1}{\lambda}D(a)_{+-} & a_{--} \end{pmatrix}, \quad 
A_{--} = \begin{pmatrix}  a_{--} & 0 \\ 
\frac{1}{\lambda}D(a)_{-+}  &  a_{++} \end{pmatrix}.
\eeq
Here, $D = [d, -]$ is the differential of morphism of complexes of factorisations, that is, $D(a)_{+-}=d^2_+ \circ a_{++} -a_{--}\circ d^1_+$ and $D(a)_{-+}=d^2_- \circ a_{--} -a_{++}\circ d^1_-$. The computation $A_{++}\(v, d^1_+(v)/ \lambda\) = \( a_{++}(v), d^2_+\circ a_{++}(v)/\lambda\)$ checks that the map $a'_{++}$  is zero on the closure $M^o_{\Fd^1,\Fd^2}$ of the graphs $\Gamma_{d^1_{\pm}/\lambda}$, $\Gamma_{d^2_{\pm}/\lambda}$ inside the fibre product 
$$
M^o_{\Fd^1,\Fd^2}\subset M^o_{\Fd^1}\times_{M^o} M^o_{\Fd^2}.
$$
Similar computation shows $a'_{--}$ also vanishes on $M^o_{\Fd^1,\Fd^2}$\vspace{-0.5mm}. Therefore, $A_{\pm\pm}$ define a morphism of factorisations $A_{\pm\pm}:\xi^1_\bullet\to \xi^2_\bullet$ on $\MF (M^o_{\Fd^1,\Fd^2}, q)\vspace{-0.5mm}$. Its pullback to $\lambda=0$ and pushforward to $C$ gives a morphism in $\mf (C,q)$.  

To regard the resulting one as a morphism between $\mathrm{sp}(\Fd^1)$ and $\mathrm{sp}(\Fd^2)$, we must carefully define the pushforward $\mf(C_{\Fd^1,\Fd^2},q)\to \mf(C,q)$, where $C_{\Fd^1,\Fd^2}$ denotes the central fibre of $M^o_{\Fd^1,\Fd^2}$\vspace{-0.7mm}. The key point is that it suffices to specify \v Cech resolutions of the pullbacks $\xi^1_\bullet$, $\xi^2_\bullet\in \mf (C_{\Fd^1,\Fd^2},q)$\vspace{-0.5mm}. These are obtained by pulling back the chosen \v Cech resolutions of $\xi^1_\bullet\in \mf (C_{\Fd^1},q)$ and $\xi^2_\bullet\in \mf (C_{\Fd^2},q)$\vspace{-0.5mm}, respectively. Under the pushforward to $\mf (C,q)$, these resolutions map precisely to $\mathrm{sp}(\Fd^1)$ and $\mathrm{sp}(\Fd^2)$. Moreover, the morphism between $\xi^1_\bullet$ and $\xi^2_\bullet$ constructed from $a$ pulls back to a morphism between the corresponding resolutions. This allows us to define $\mathrm{sp}(a):\mathrm{sp}(\Fd^1)\to \mathrm{sp}(\Fd^2)$.

For an odd morphism $b = (b_{+-}, b_{-+}) \in \Hom(F_+^1, F_-^2) \oplus \Hom(F_-^1, F_+^2),$  we similarly define morphisms of tautological bundles
{\setlength{\jot}{-1pt}
\begin{align} \label{defofB}
& b'_{+-}:\; \xi^1_+\ \into F^1_+\oplus F^1_- \rt{B_{+-}} F^2_-\oplus F^2_+\Onto (F^2_-\oplus F^2_+)/\xi^2_-,\vspace{-1.5mm}\\
& b'_{-+}:\; \xi^1_-\ \into F^1_-\oplus F^1_+ \rt{B_{-+}} F^2_+\oplus F^2_-\Onto (F^2_+\oplus F^2_-)/\xi^2_+, \nonumber
\end{align}
}%
for each parity, where $B_{\pm\mp}$ are defined using $b$,
\beq{matrixBB}
B_{+-} = \begin{pmatrix}  b_{+-} & 0 \\ 
\frac{1}{\lambda}D(b)_{++} & -b_{-+} \end{pmatrix}, \quad 
B_{-+} = \begin{pmatrix}  b_{-+} & 0 \\ 
\frac{1}{\lambda}D(b)_{--}  & -b_{+-} \end{pmatrix}.
\eeq
The rest of the construction goes exactly the same. 

This defines $\mathrm{sp}(a)$,
$$
\mathrm{sp}(a) : \mathrm{sp}(F^1_\bullet) \To \mathrm{sp}(F^2_\bullet)\quad\text{in}\quad \mf (C,q).
$$

\subsection*{Differential to differential}
For an even morphism $a$, we would like to prove that
\beq{prdiff}
\mathrm{sp}(D(a))=D(\mathrm{sp}(a)).
\eeq
Since $a$ is of even degree, $D(a)$ should be of odd degree. Let $DA_{\pm\mp}$ denote the matrix $B_{\pm\mp}$ in \eqref{defofB} for $D(a)$ rather than $b$, 
$$
\xi^1_\pm\ \into F^1_\pm\oplus F^1_{\mp}\rt{DA_{\pm\mp}} F^2_\mp\oplus F^2_{\pm}\Onto (F^2_\mp\oplus F^2_\pm)/\xi^2_\mp,
$$
defining $\mathrm{sp}(D(a))$. Using \eqref{matrixBB}, we can check that
\begin{align*}
DA_{\pm\mp} &= 
\begin{pmatrix}
\frac{1}{\lambda }D(a)_{\pm\mp} & 0 \\
\frac{1}{\lambda^2}D^2(a)_{\pm\pm} & -\frac{1}{\lambda}D(a)_{\mp\pm}
\end{pmatrix} \\
&=
\begin{pmatrix}
\frac{1}{\lambda}D(a)_{\pm\mp} & 0 \\
0& -\frac{1}{\lambda}D(a)_{\mp\pm}
\end{pmatrix}
=\begin{pmatrix}
0 & 1 \\
q & 0
\end{pmatrix}A_{\pm\pm}-
A_{\mp\mp}\begin{pmatrix}
0 & 1 \\
q & 0
\end{pmatrix}.
\end{align*}
The resulting morphism defines $D(\mathrm{sp}(a))$ since the \v Cech resolution of $\Fd[1]$ was chosen to be the shift of that of $\Fd$. This checks \eqref{prdiff} when $\deg a=$even. We can check it for $\deg a$=odd similarly.

\subsection*{Identity to identity} For $\id:\Fd\to \Fd$, the space $M^o_{\Fd,\Fd}\subset M^o_{\Fd}\times_{M^o}M^o_{\Fd}$ is the diagonal, and $\id:\Fd\to \Fd$ maps to $\id=\mathrm{sp}(\id):\mathrm{sp}(\Fd)\to\mathrm{sp}(\Fd)$.

\subsection*{Composition to composition}
Given even morphisms $a_1: \Fd^1\to \Fd^2$, $a_2: \Fd^2\to \Fd^3$ in $\MF (Y,f)$, we would like to prove that 
\beq{prcomm}
\mathrm{sp}(a_2\circ a_1)=\mathrm{sp}(a_2)\circ \mathrm{sp}\mathrm(a_1).
\eeq 
For this we consider the fibre produce $M^o_{\Fd^1,\Fd^2}\times_{M^o}M^o_{\Fd^2,\Fd^3}\times_{M^o}M^o_{\Fd^1,\Fd^3}$\vspace{-0.3mm} on which all $a_1$, $a_2$ and $a_2\circ a_1$ defined. Also let $A_1, A_2$ and $A_{12}$ denote the matrices in \eqref{defofA} for $a_1$, $a_2$ and $a_2\circ a_1$, respectively,
$$
F^1_\pm\oplus F^1_\mp\rt{A_{1,\pm\pm}} F^2_\pm\oplus F^2_{\mp} \rt {A_{2, \pm\pm}}  F^3_\pm\oplus F^3_\mp.
$$
Using \eqref{matrixAA}, one can directly check that $A_1 \cdot A_2 = A_{12}$; for example,
\begin{align*}
\(A_{1} \cdot A_{2}\)_{++} &=
\begin{pmatrix}
a_{1++} & 0 \\
\frac{1}{\lambda}D(a_1)_{+-}& a_{1--}
\end{pmatrix}
\begin{pmatrix}
a_{2++} & 0 \\
\frac{1}D{\lambda}(a_2)_{+-}& a_{2--}
\end{pmatrix} \\
&=
\begin{pmatrix}
(a_1\circ a_2)_{++} & 0 \\
\frac{1}{\lambda}\(a_1 \circ D(a_2) + D(a_1)\circ a_{2}\)_{+-}& (a_1\circ a_2)_{--}
\end{pmatrix} 
=(A_{12})_{++}
\end{align*}
Here, we used the graded Leibniz rule for the last equality. To conclude that this proves \eqref{prcomm} for $\deg a_1=\deg a_2$ even, we must specify the \v Cech resolutions used to define the pushforward from the fibre product. As expected, we use the pullback resolutions of $\xi_\bullet^i$ that were already chosen in $\mf (C_{\Fd^i},q)$ when defining $\mathrm{sp}(\Fd^i)$, for $i=1,2,3$. Then the morphisms pull back to define morphisms between the resolutions. The identity $A_1\cdot A_2=A_{12}$ proves \eqref{prcomm} together with this pushforward. We can check \eqref{prcomm} for morphisms of any degree in a similar way.

\subsection*{Absolutely acyclic to absolutely acyclic}
Suppose that $\Fd$ is absolutely acyclic. We would like to prove that its specialisation $\mathrm{sp}(\Fd)$ is also absolutely acyclic. By Lemma \ref{lem: coeqcon}, $\Fd$ is locally contractible. Since the deformation to normal cone $M^o$ and its blowup $M^o_{\Fd}$ are stable under flat pullbacks, especially open immersions, we may assume that $\Fd$ is contractible with a contracting homotopy $h_\bullet = h_\pm : F_\pm \to F_\mp$. Using this, we construct an explicit contracting homotopy of $\xi_\bullet$. We first extend $h_\bullet$ to a contracting homotopy $\wt h_\bullet$ of the factorisation \eqref{framefact}. In the diagram below, the first and third modules are the odd factor of \eqref{framefact} while the second is the even factor. The lower morphisms are the structure maps; the upper ones, the homotopy $\wt h_\bullet$,
$$
\xymatrix{
F_+\oplus F_-\ar@/_1pc/[rrr]_-{1\;\oplus\; q} &&& F_+\oplus F_- \ar@/_1pc/[lll]_-{\tilde h_+= \begin{bsmallmatrix} 1 & -\lambda h_- \\ \lambda h_+ & 0 \end{bsmallmatrix} } \ar@/_1pc/[rrr]_-{q\;\oplus\; 1} &&& F_+\oplus F_- \ar@/_1pc/[lll]_-{\tilde h_-=\begin{bsmallmatrix} 0 & -\lambda h_+ \\ \lambda h_- & 1 \end{bsmallmatrix}.}
}. 
$$ 
To see $\wt h_\bullet$ descends to a contracting homotopy of $\xi_\bullet$, as in Lemma \ref{lem: vanish}, we consider the composition maps
$$
\beta_{\pm}:\xi_\pm \hookrightarrow F_+\oplus F_-\xrightarrow{\tilde h_\pm} F_+\oplus F_-\onto \Fd/\xi_\mp.
$$
We claim that these $\beta_\pm$ vanish on $M^o_{\Fd} \vert_{\lambda \neq 0}$. Writing an element of $\xi_\bullet$ as $(v, d_+(v)/\lambda)$, the homotopy relation $[d_\bullet, h_\bullet] = \id_{\Fd}$ checks 
$$
\beta_+\(v, \frac{d_+(v)}{\lambda}\) = \( v- h_-d_+(v), \lambda h_+(v)\) = \(d_-h_+(v), \lambda h_+(v)\) 
$$
which is zero in $F_+\oplus F_-/\xi_-$. This shows that $\beta_+ =0$ on $M^o_{\Fd}\vert_{\lambda\neq 0}$\vspace{-0.5mm} and hence on its closure $M^o_{\Fd}$. Similarly, $\beta_-$ is also zero on $M^o_{\Fd}$. As a result, $\tilde h_\bullet$ restricts to $\xi_\bullet$ to be a contracting homotopy, making $\xi_\bullet$ contractible. Then again by Lemma \ref{lem: coeqcon}, $\xi_\bullet$ is absolutely acyclic. Recall that $\mathrm{sp}(\Fd)$ is the pullback of $\xi_\bullet$ to $C_{\Fd}$ followed by the pushforward to $C$. Since both functors preserve absolute acyclicity, We conclude $\mathrm{sp}(\Fd)$ is absolutely acyclic. 

\begin{thm}
\label{thm: spfunctor}
The specialisation 
$$
\mathrm{sp}: \MF(Y, f) \To \mf(C, q)
$$ 
is a DG functor preserving absolute acyclicity. In particular, it induces a triangulated functor 
$$
\mathrm{sp}: D^{\mathrm{abs}}\MF(Y, f)\To D^{\mathrm{abs}}\mf(Y, f)\cong D^{\mathrm{abs}}\Coh(Y, f),
$$ 
and a homomorphism between their $K$-groups
$$
\mathrm{sp}: K\(\MF(Y, f)\) \To K\(\mf(C, q)\).
$$
\end{thm}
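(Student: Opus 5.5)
The proof will assemble the verifications already carried out in this subsection into the definition of a DG functor, and then pass to homotopy categories, Verdier quotients and $K$-groups. On objects, $\mathrm{sp}$ is given by \eqref{defsp}: we form the closure $M^o_{\Fd}$ in the Grassmannian bundle, restrict the tautological factorisation $\xi_\bullet$ to $C_{\Fd}$, and push it forward along $\rho$ using the chosen \v Cech resolution. Lemma \ref{lem: vanish} shows that $\xi_\bullet$ is a factorisation of $q$. Since $\rho$ is proper and $C_{\Fd}$ inherits quasi-compactness and affine diagonal from $Z$ (it is proper over $C$), the pushforward lands in $\mf(C,q)$.

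On morphisms, $\mathrm{sp}$ is given by the matrices \eqref{matrixAA} and \eqref{matrixBB} on $M^o_{\Fd^1,\Fd^2}$. These are then pushed forward using the pulled-back resolutions, which are compatible with the resolutions chosen for $\mathrm{sp}(\Fd^i)$. This assignment is $\C$-linear and preserves parity, since $a\mapsto A$ is linear. The DG functor axioms are exactly the computations above: \eqref{prdiff} gives compatibility with differentials, and the identity check and \eqref{prcomm} give unitality and composition. One point needs care here. For composition we must know that pushing forward from the triple fibre product $M^o_{\Fd^1,\Fd^2}\times_{M^o}M^o_{\Fd^2,\Fd^3}\times_{M^o}M^o_{\Fd^1,\Fd^3}$ recovers the morphisms defined on each pairwise $M^o_{\Fd^i,\Fd^j}$. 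The plan is to use that all of these spaces map properly and birationally onto their images. The morphism on resolutions is pulled back from the pairwise closure, so its pushforward to $C$ agrees with the one defined on that pairwise closure; this uses functoriality of the \v Cech pushforward under the proper maps between these spaces over $C$. Mixed-parity compositions are handled the same way using the $B$-matrices.

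Next, shifts and cones. The shift is respected because the resolution for $\Fd[1]$ is chosen as the shift of that for $\Fd$, using $M^o_{\Fd}\cong M^o_{\Fd[1]}$. A DG functor commuting with shifts sends cones of closed even morphisms to objects canonically homotopy equivalent to the cones of the images. Hence the functor induced on homotopy categories $[\MF(Y,f)]\to[\mf(C,q)]$ is exact, since any DG functor between pretriangulated DG categories induces an exact functor on homotopy categories once it is compatible with shifts up to natural isomorphism.

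Finally, we have shown that $\mathrm{sp}$ sends absolutely acyclic objects to absolutely acyclic objects. This used Lemma \ref{lem: coeqcon} to reduce to the contractible case, the explicit homotopy $\wt h_\bullet$ restricting to $\xi_\bullet$, and the fact that restriction to $C_{\Fd}$ and pushforward preserve absolute acyclicity. We must also check that local contractibility on an open cover of $Y$ yields local contractibility on the induced cover of $M^o_{\Fd}$; this holds because the whole construction is compatible with flat base change along open immersions. By the universal property of the Verdier quotient, $\mathrm{sp}$ then descends to a triangulated functor $D^{\mathrm{abs}}\MF(Y,f)\to D^{\mathrm{abs}}\mf(C,q)\cong D^{\mathrm{abs}}\Coh(C,q)$. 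Being exact, it induces the homomorphism on Grothendieck groups.

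The main obstacle is the bookkeeping of \v Cech resolutions, that is, checking that the pushforward is strictly functorial rather than functorial only up to homotopy, so that $\mathrm{sp}$ is genuinely a DG functor. I would handle this by fixing a single affine cover of $C$ and pulling it back to every $C_{\Fd}$ and every fibre product. The \v Cech complexes are then pullbacks of one another, and composition commutes on the nose.
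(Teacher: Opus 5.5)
Your architecture is the paper's. The theorem summarises the object/morphism/differential/identity/composition/acyclicity checks that precede it, and the passage to $D^{\mathrm{abs}}$ and $K$-groups is formal \emph{once a genuine DG functor is in hand}.

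The first gap is that you treat the morphism-level construction as settled (``the DG functor axioms are exactly the computations above''), and it is not. On $\xi^1_+$ the matrix $A_{++}$ of \eqref{matrixAA} acts as $(v,d^1_+v/\lambda)\mapsto(a_{++}v,d^2_+a_{++}v/\lambda)$, i.e.\ as ``$a_{++}$ in graph coordinates''. For non-closed $a$ this can have a pole on $M^o_{\Fd^1,\Fd^2}$. Take $Y=\AA^2$, $Z=\{0\}$, $f=xy$, so $M^o=\Spec\C[\lambda,x/\lambda,y/\lambda]$, and let $\Fd^1=(\cO\rt{x}\cO\rt{y}\cO)$, $\Fd^2=(\cO\rt{1}\cO\rt{xy}\cO)$. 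All graph sections are regular, so $M^o_{\Fd^1}=M^o_{\Fd^2}=M^o_{\Fd^1,\Fd^2}=M^o$, with $\xi^1_+$ spanned by $(1,x/\lambda)$ and $\xi^2_+$ by $(\lambda,1)$. For $a=(a_{++},a_{--})=(1,0)$ one gets $D(a)_{+-}=1$ and $A_{++}(1,x/\lambda)=(1,1/\lambda)=\lambda^{-1}(\lambda,1)$. This is a pole along the whole central fibre, which no blowup removes. Likewise the null-homotopy $b=(b_{+-},b_{-+})=(0,1)$ of the closed map $D(b)=(x,1)$ does not specialise; $B_{-+}$ has the same $1/\lambda$ pole.

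The recipe is only valid for closed morphisms, where $A_{++}=a_{++}\oplus a_{--}$ is regular. So the formal step ``DG functor, hence exact on homotopy categories'' is unavailable, since it needs the non-closed structure maps of cones and null-homotopies to specialise. You would still have to prove directly that homotopic closed maps have homotopic specialisations. You would also need $\mathrm{sp}(\Cone(a))\cong\Cone(\mathrm{sp}(a))$ in $D^{\mathrm{abs}}\mf(C,q)$, or at least $[\mathrm{sp}\,\Cone(a)]=[\mathrm{sp}\,\Fd^2]-[\mathrm{sp}\,\Fd^1]$ in $K$. This gap is inherited from the paper's sketch, but your proposal does not close it.

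Your proposed fix for the \v Cech bookkeeping also fails. The map $\rho\colon C_{\Fd}\to C$ is proper with positive-dimensional fibres in general. For example, with $Z=\{x=0\}\subset\AA^2$, $f=x^2y$, $\Fd=(\cO\rt{y}\cO\rt{x^2}\cO)$, $C_{\Fd}$ contains an exceptional $\mathbb{P}^1$-bundle over $\{y=0\}\subset C$. So the preimages $\rho^{-1}(V_i)$ of an affine cover of $C$ are not affine. The \v Cech complex for the pulled-back cover then computes only the underived $\rho_*\xi_\bullet$, not the $R\rho_*\xi_\bullet$ of \eqref{defsp}. Unless you also show $R^{>0}\rho_*\xi_\pm=0$, you would be proving the statement for a different functor. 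The paper instead fixes affine \v Cech resolutions on each $C_{\Fd}$ separately and pulls them back only to the fibre products.

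Even then, you must identify the pushforward from $C_{\Fd^1,\Fd^2}$ (or your triple product) of the pulled-back $\xi^i_\bullet$ with $\mathrm{sp}(\Fd^i)$. That requires something like $Rg_{i*}\cO=\cO$ for the projections $g_i$ between central fibres. ``Proper and birational onto the image'' does not give this. The maps are isomorphisms only over $\lambda\neq0$, their restrictions to central fibres need not be birational, and without normality even $g_*\cO=\cO$ can fail.
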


\subsection{Smooth cover}
We introduce a commutativity of the specialisation and a smooth cover. Let $Z\to Y$ and $f\in I^2_{Z/Y}$\vspace{-0.5mm} be as in the previous section. Suppose, furthermore, that we have an embedding $Z\into A$ with smooth surjective morphism $\pi:A\to Y$ commuting $Z$,
\beq{Scover}
\xymatrix@R=5mm{
Z\ar@{^(->}[r]\ar[rd] & A\ar[d]^-{\pi}\\
& Y.
}
\eeq
Then one can check $\pi^*f\in I^2_{Z/A}$\vspace{-0.5mm}. In fact, we have a morphism of deformations $\pi: M^o_{Z/A}\to M^o_{Z/Y}$ with $\pi^*(f/\lambda^2)=(\pi^*f)/\lambda^2$ by abusing notation for the projection morphism $\pi$. The morphism $\pi$ is smooth (although it is not a base change) so that we can define the pullback $\pi^*$ without resolutions.

We may expect a commutativity in the level of DG category but it requires a further discussion. It is rather simpler to discuss with derived categories, but we state it in the level of $K$-theory which will be enough for our purpose.

\begin{prop}\label{Kcomm} For the smooth cover \eqref{Scover}, we have the following commutative diagram of $K$-groups
\beq{MKMKMK}
\xymatrix{
K\(\MF (Y,f)\)\ar[r]^-{\mathrm{sp}}\ar[d]_-{\pi^*} & K\(\mf (C_{Z/Y},q)\) \ar[d]^-{\pi^*}\\
K\(\MF (A,\pi^*f)\)\ar[r]^-{\mathrm{sp}} & K\(\mf(C_{Z/A},\pi^*q)\)
}
\eeq
\end{prop}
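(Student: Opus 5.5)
Fix $\Fd\in\MF(Y,f)$; we want $\pi^*\mathrm{sp}(\Fd)=\mathrm{sp}(\pi^*\Fd)$ in $K\(\mf(C_{Z/A},\pi^*q)\)$. The plan is to compare the two graph closures. The map $\pi:M^o_{Z/A}\to M^o_{Z/Y}$ is smooth, and the Grassmannian bundle $G_A$ used for $\pi^*\Fd$ is the base change $G\times_{M^o_{Z/Y}}M^o_{Z/A}$. The rational section \eqref{lce} for $\pi^*\Fd$ is the pullback of the one for $\Fd$, since $\pi^*d_\pm/\lambda$ is the pullback of $d_\pm/\lambda$. Scheme-theoretic closure commutes with flat base change, so
$$
M^o_{\pi^*\Fd}\ \cong\ M^o_{\Fd}\times_{M^o_{Z/Y}}M^o_{Z/A},
$$
and the tautological factorisation $\xi_\bullet$ on the left is the pullback of the one on the right. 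Restricting to $\lambda=0$ gives a Cartesian square with $C_{\pi^*\Fd}=C_{\Fd}\times_{C_{Z/Y}}C_{Z/A}$, the map $\pi':C_{\pi^*\Fd}\to C_{\Fd}$ smooth, and $\rho_A:C_{\pi^*\Fd}\to C_{Z/A}$ the base change of the proper map $\rho:C_{\Fd}\to C_{Z/Y}$. We must check that $\pi|_{\lambda=0}:C_{Z/A}\to C_{Z/Y}$ is flat, and that the central fibre of $M^o_{Z/A}$ is the base change of that of $M^o_{Z/Y}$. This holds because $\pi$ is smooth and the central fibre is the Cartier divisor $\lambda=0$, pulled back along $\pi$.

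Given this, flat base change for the \v Cech-resolved pushforward gives $\pi^*\rho_*\xi_\bullet\simeq\rho_{A*}\pi'^*\xi_\bullet=\rho_{A*}\xi^A_\bullet$. More precisely, the chosen \v Cech resolution of $\xi_\bullet$ on $C_{\Fd}$ pulls back to a \v Cech-type resolution of $\xi^A_\bullet$ on $C_{\pi^*\Fd}$. Hence $\pi^*\mathrm{sp}(\Fd)$ and $\mathrm{sp}(\pi^*\Fd)$ are both computed as pushforwards of resolutions of the same object. Their difference therefore lies in the choice of resolution only, so they are isomorphic in $D^{\mathrm{abs}}\mf(C_{Z/A},\pi^*q)$. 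In particular their $K$-classes agree.

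Since $K(\MF(Y,f))$ is generated by classes of objects, this object-level identity gives the commutativity of \eqref{MKMKMK}. We do not need compatibility on morphisms, which is why the statement is made only in $K$-theory.

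The main obstacle is the base-change identification of $M^o_{\pi^*\Fd}$ with the fibre product. The key point is that $\pi:M^o_{Z/A}\to M^o_{Z/Y}$ is not a base change of $\pi:A\to Y$, so we cannot use that directly. Instead we will rely on its smoothness, hence flatness, which is asserted in the text and which we would verify locally by writing $A$ as smooth over $Y$ and $M^o_{Z/A}$ as smooth over $M^o_{Z/Y}$ with relative tangent $T_{A/Y}$. Flatness makes scheme-theoretic image, and hence closure of the open dense locus $\lambda\neq0$, commute with pullback. This gives the identification, including integrality, since a flat pullback of an integral closure along a smooth map is reduced with no components over $\lambda=0$.

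The second technical point is flat base change for the proper pushforward of quasi-coherent factorisations. We expect to prove it termwise on \v Cech complexes, using that $\pi^*$ is exact and commutes with the \v Cech construction for the pulled-back cover.
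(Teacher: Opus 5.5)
Your proposal is correct and follows the paper's proof essentially step by step. The paper likewise uses flatness of $\pi\colon M^o_{Z/A}\to M^o_{Z/Y}$ to identify $M^o_{\pi^*\Fd}$ with the base change of $M^o_{\Fd}$, with the pulled-back tautological factorisation, and then gets $\pi^*\rho_*\xi_\bullet\cong\rho_*\pi^*\xi_\bullet$ in $D^{\mathrm{abs}}$ by flat base change, citing \cite[Proposition 5.9]{BDFIK} where you argue termwise on \v Cech complexes; this yields the equality of $K$-classes.
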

\begin{proof}
For a matrix factorisation $\Fd\in \MF(Y,f)$, we have a fibre diagram
$$
\xymatrix@R=7mm{
M^o_{\pi^*\Fd}\ar[r]^-{\pi}\ar[d]_-r & M^o_{\Fd}\ar[d]^-{r}\\
M^o_{Z/A} \ar[r]^-{\pi} & M^o_{Z/Y}
}
$$
because the blowup loci in $M^o_{Z/A}$ and $M^o_{Z/Y}$ to define $M^o_{\pi^*\Fd}$ and $M^o_{\Fd}$, respectively, is the base change from one to the other; and $\pi$ is flat. It also tells us that the tautological factorisation on $M^o_{\pi^*\Fd}$ is the pullback of the one $\xi_\bullet$ on $M^o_{\Fd}$. We denote it by $\pi^*\xi_\bullet\in \MF (M^o_{\pi^*\Fd},\pi^*q)$. Hence $\pi^*\xi_\bullet \in \MF (C_{\pi^*\Fd},\pi^*q)$ at the central fibre is also the pullback of $\xi_\bullet \in \MF (C_{\Fd},q)$.

In the level of derived category, we have the base-change theorem \cite[Proposition 5.9]{BDFIK},
$$
\xymatrix{
D^{\mathrm{abs}}\MF (C_{\Fd},q) \ar[r]^-{\pi^*} \ar[d]_-{\rho_*} & D^{\mathrm{abs}}\MF (C_{\pi^*\Fd},\pi^*q) \ar[d]^-{\rho_*}  \\
D^{\mathrm{abs}}\mf (C_{Z/Y},q) \ar[r]^-{\pi^*} & D^{\mathrm{abs}}\mf (C_{Z/A}, \pi^*q) 
}
$$
basically because we can use any resolutions for the pushforwards $\rho_*$. Hence we have $\pi^*\rho_*\xi_\bullet\cong \rho_*\pi^*\xi_\bullet\in D^{\mathrm{abs}}\mf (C_{Z/A},\pi^*q)$, and
$$
\pi^*\mathrm{sp}(\Fd)=\pi^*\rho_*\xi_\bullet=\rho_*\pi^*\xi_\bullet=\mathrm{sp}(\pi^*\Fd)\in K\(\mf(C_{Z/A},\pi^*q)\),
$$
which proves the commutativity \eqref{MKMKMK}.
\end{proof}

\section{Clifford factorisation}\label{sect:CF}
In this section, we review a classical theory of Clifford algebras and modules. We refer readers to \cite{Ch} for details. 

\subsection{Clifford algebra/module}\label{Sect:MF}
Let $(E,q)$ be an $SO(2n,\C)$ bundle over an Artin stack $Y$\footnote{\label{f4}In some places, we use arguments from \cite{OT1,OS}. In those instances, we either regard $Y$ as a DM stack or interpret the arguments as a generalisation to the case where $Y$ is an Artin stack.}. Its \emph{Clifford algebra} is defined as
$$
Cl(E,q):= T(E)/(v\otimes v +q(v,v)=0).
$$
It is a bundle on $Y$ of rank $2^{2n}$. It is canonically $(\Z/2)$-graded by the parity of a word-length of its element.

A \emph{Clifford module} of $Cl(E, q)$ is a sheaf $F$ of $\cO_Y$-modules with Clifford multiplication which is an $\cO_Y$-algebra homomorphism $\phi_F: Cl(E, q) \to \mathcal{E}nd_Y(F)$. We require them to be also $(\Z/2)$-graded $F = F_+\oplus F_-$. We call it \emph{irreducible} if each fibre $F\vert_x$ is an irreducible representation of $Cl(E, q)\vert_x$. The Clifford algebra $Cl(\C^{2n}, q)$ at a fibre is isomorphic to the matrix algebra of rank $2^n$. Therefore, an irreducible $F$, if exists, must be locally free of rank $2^n$ with $\phi_F$ isomorphism.  

We associate to a maximal isotropic subbundle $\Lambda\subset E$\footnote{There is always a preferred one on a suitable cover as explained in \cite{EG}. We use such a cover whenever the existence of a maximal isotropic subbundle is needed.} a canonical irreducible Clifford module $\cS_\Lambda$ defined to be the quotient
$$
\cS_\Lambda:=\frac{Cl(E,q)}{\langle \Lambda\rangle},
$$
where $\langle \Lambda\rangle\subset Cl(E,q)$ is the left ideal generated by $\Lambda$. A local decomposition $E\cong \Lambda \oplus \Lambda^*$ gives a local isomorphism $\cS_\Lambda\cong \Lambda^\bullet \Lambda^*$ of $\cO_Y$-modules. In particular, $\cS_\Lambda$ is irreducible and $(\Z/2)$-graded.

\begin{prop}
\label{lem: IandS}
We have an equality {\em (}\!\;as $(\Z/2)$-graded elements{\em )}
$$
[\cS_\Lambda] = [\Lambda^\bullet \Lambda^*] \in K^0(Y). 
$$
In particular, $[\cS_\Lambda]$ is $K$-theoretic Euler class $\mathfrak{e}(\Lambda)$.
\end{prop}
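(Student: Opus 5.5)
Since $\cS_\Lambda$ is only locally isomorphic to $\Lambda^\bullet\Lambda^*$, I will build a canonical global filtration of $\cS_\Lambda$ whose associated graded is $\Lambda^\bullet\Lambda^*$, compatible with the $\Z/2$-grading. Additivity of $K$-classes in short exact sequences then gives the equality in $K^0(Y)$.

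\textbf{The filtration.} The Clifford algebra $Cl(E,q)$ has the word-length filtration $F_kCl=\mathrm{image}\big(\bigoplus_{j\le k}E^{\otimes j}\big)$, whose associated graded is canonically $\Lambda^\bullet E$. This is the usual symbol isomorphism, and it is defined globally. The left ideal $\langle\Lambda\rangle$ inherits a filtration, and so does the quotient $\cS_\Lambda$, via $F_k\cS_\Lambda:=$ the image of $F_kCl$.

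\textbf{Identifying the graded pieces.} I claim $\mathrm{gr}_k\cS_\Lambda\cong\Lambda^k(E/\Lambda)\cong\Lambda^k\Lambda^*$, where $E/\Lambda\cong\Lambda^*$ is induced by $q$ since $\Lambda$ is maximal isotropic. There is a natural surjection $\Lambda^k E\to\mathrm{gr}_k\cS_\Lambda$. It kills $\Lambda\wedge\Lambda^{k-1}E$, because any word containing a vector of $\Lambda$ can be reordered, modulo shorter words (the commutators $uv+vu=-2q(u,v)$ have lower length), so that the $\Lambda$-vector comes last and hence lies in $\langle\Lambda\rangle$. So the surjection factors through $\Lambda^k(E/\Lambda)$. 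To see that the induced map is an isomorphism, it suffices to check locally. With a splitting $E\cong\Lambda\oplus\Lambda^*$, the module $\cS_\Lambda\cong\Lambda^\bullet\Lambda^*$ is free of rank $2^n$, and the filtration becomes the filtration by degree. So both sides are locally free of the same ranks, and the map is a surjection between them, hence an isomorphism. The $\Z/2$-grading is respected because $F_k$ and the parity of $k$ are compatible. Therefore
$$[\cS_\Lambda]=\sum_k(-1)^k[\Lambda^k\Lambda^*]=[\Lambda^\bullet\Lambda^*]$$
in $K^0(Y)$, and by definition this equals $\mathfrak{e}(\Lambda)$.

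\textbf{Main obstacle.} The hard step is the reordering argument: one must verify that $F_{k-1}$-corrections stay inside $\langle\Lambda\rangle+F_{k-1}$, so that the factorization through $\Lambda^k(E/\Lambda)$ is well defined. The cleanest way is to work with the filtration on $\langle\Lambda\rangle=Cl\cdot\Lambda$ directly and show $\mathrm{gr}\langle\Lambda\rangle=\Lambda^\bullet E\wedge\Lambda$. For the inclusion $\supseteq$, use the symbols of the products $w\cdot\ell$; for equality, compare ranks locally.
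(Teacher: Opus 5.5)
Your argument is correct, but it takes a different route from the paper. The paper deforms the pair $(E,\Lambda)$ itself. It builds a family of $SO(2n,\C)$ bundles $E_\lambda$ over $Y\times\AA^1$ as an isotropic reduction of $E\oplus(\Lambda\oplus\Lambda^*)$, with $E_\lambda\cong E$ for $\lambda\neq 0$ and $E_0\cong\Lambda\oplus\Lambda^*$ carrying the pairing form, and $\Lambda$ stays maximal isotropic throughout. So $\cS_\Lambda$ varies in a family whose central fibre is the Clifford module of the split form, namely $\Lambda^\bullet\Lambda^*$, and the equality follows by homotopy invariance.

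You instead keep $E$ fixed and filter $\cS_\Lambda$ by word length, identifying $\mathrm{gr}_k\cS_\Lambda\cong\Lambda^k(E/\Lambda)\cong\Lambda^k\Lambda^*$. The equality then comes purely from additivity along short exact sequences of vector bundles.

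What your approach buys:
\begin{itemize}
\item It works verbatim in $K^0(Y)$ for an arbitrary, possibly singular, $Y$ without any appeal to $\AA^1$-invariance. That invariance is not automatic for vector-bundle $K$-theory over a non-regular base; the paper's degeneration implicitly relies on the family being functorial in $(E,\Lambda)$, hence pulled back from a smooth universal situation.
\item It gives a finer statement: $\cS_\Lambda^{+}$ and $\cS_\Lambda^{-}$ are separately filtered, with graded pieces $\Lambda^{\mathrm{even}}\Lambda^*$ and $\Lambda^{\mathrm{odd}}\Lambda^*$.
\end{itemize}
What the paper's approach buys is brevity, and consistency with the same degeneration technique it reuses later, e.g.\ in Proposition \ref{prop: factisored}, where $E$ is degenerated to $(K^\perp/K)\oplus(K\oplus K^*)$.

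Finally, the step you flag as the main obstacle is easier than you suggest. For $w\in F_{k-1}Cl$ and $\ell\in\Lambda$, the product $w\cdot\ell$ lies in $\langle\Lambda\rangle\cap F_kCl$ and has symbol $\bar w\wedge\ell$. Hence $\Lambda^{k-1}E\wedge\Lambda$ dies in $\mathrm{gr}_k\cS_\Lambda$ immediately, and no reordering argument is needed.
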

\begin{proof}
We use one-parameter family of $SO(2n,\C)$ bundles,
$$
E_\lambda : = \frac{\ker \(E \oplus (\Lambda\oplus \Lambda^*) \xrightarrow{\pi -\lambda\;\cdot\;\mathrm{pr}_2}  \Lambda^*\)}{\Lambda},\ \ \lambda\in \AA^1,
$$
where $\pi:E\to \Lambda^*$ denotes the cokernel of $\Lambda\subset E$. Its restriction is isomorphic to $E$ at $\lambda\neq 0$ and $E_0\cong \Lambda\oplus \Lambda^*$ at $\lambda=0$. The bundle $\Lambda\subset E_
\lambda$ remains as a maximal isotropic subbundle as
$$
\Lambda\cong \frac{\Lambda\oplus (\Lambda\oplus 0)}{\Lambda}\subset E_\lambda
$$
so that we obtain the family of $S_\Lambda$. This is a degeneration from $S_\Lambda$ in a generic fibre to $\Lambda^\bullet \Lambda^*$ in the central fibre.
\end{proof}

\begin{rmk}
The bundle $\cS_\Lambda$ has the following differential geometric interpretation. Consider the corresponding real, positive definite quadratic bundle $(E_\R, q_\R)$. The maximal isotropic bundle $\Lambda \subset E$ determines a $ q_\R$-compatible complex structure $J$ and hence, a canonical $Spin^c$-structure on $E_\R$. Its associated complex spinor bundle is precisely $\cS_\Lambda$.
\end{rmk}

\subsection*{Some properties}
For the proposition below we assume that $E$ comes equipped with the spin structure which we briefly review in Appendix \ref{AppA}. 
\begin{prop}
\label{prop: sqcorrection}
When $E$ is spin, we can assign the square root determinant line bundle $\sqrt{\det \Lambda}$ to each maximal isotropic subbundle $\Lambda\subset E$ so that the $(\Z/2)$-graded Clifford module
$$
\cS_\Lambda  \otimes \sqrt{\det \Lambda}\[\vert \Lambda \vert\]
$$
is independent of $\Lambda$ up to a canonical isomorphism after shifting by $\vert \Lambda \vert$ the sign of $\Lambda$ determined by the orientation of $E$ \cite[Definition 2.2]{OT1}. In particular, $\vert \Lambda \vert\cdot[\cS_\Lambda \otimes \sqrt{\det \Lambda}]$ represents the $K$-theoretic square root Euler class $\sqrt{\mathfrak{e}}(E)$ in \cite[Definition 5.3]{OT1} by Proposition \ref{lem: IandS}.
\end{prop}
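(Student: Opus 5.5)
The claim to prove is that when $E$ is spin, the twisted Clifford module $\cS_\Lambda\otimes\sqrt{\det\Lambda}\,[|\Lambda|]$ is independent of the maximal isotropic subbundle $\Lambda$. The plan is to compare $\Lambda$ with the spinor bundle of $E$ coming from the spin structure, namely an irreducible graded Clifford module $\cS$ with $\phi_{\cS}:Cl(E,q)\cong\mathcal{E}nd(\cS)$.

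\textbf{Step 1: a canonical line.} Given $\Lambda$, we consider the subsheaf $\cS^{\Lambda}\subset\cS$ annihilated by $\Lambda$. Locally $E\cong\Lambda\oplus\Lambda^*$ and $\cS\cong\Lambda^\bullet\Lambda^*\otimes N$ for a line bundle $N$, with $\Lambda$ acting by contraction. Then $\cS^\Lambda=\Lambda^0\Lambda^*\otimes N$ is a line bundle, of parity $|\Lambda|$ relative to the grading of $\cS$.

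\textbf{Step 2: identify $\cS$ with $\cS_\Lambda\otimes\cS^\Lambda$.} Clifford multiplication on $\cS^\Lambda$ gives a map
$$
Cl(E,q)\otimes\cS^\Lambda\To\cS.
$$
It kills $\langle\Lambda\rangle\otimes\cS^\Lambda$, so it factors through $\cS_\Lambda\otimes\cS^\Lambda$. The induced map $\cS_\Lambda\otimes\cS^\Lambda\to\cS$ is a nonzero morphism between irreducible modules of the same rank $2^n$. By the local model it is an isomorphism, and it respects the grading once we shift by the parity of $\cS^\Lambda$.

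\textbf{Step 3: define the square root.} We set
$$
\sqrt{\det\Lambda}:=(\cS^\Lambda)^{\vee}.
$$
This is the assignment the statement asks for. With it, Step 2 becomes a canonical isomorphism
$$
\cS_\Lambda\otimes\sqrt{\det\Lambda}\,[|\Lambda|]\cong\cS,
$$
and the right-hand side does not involve $\Lambda$, which gives independence. The remaining check, and the main obstacle, is that $(\cS^\Lambda)^{\otimes-2}$ is indeed canonically $\det\Lambda$, and that the parity of $\cS^\Lambda$ agrees with the sign $|\Lambda|$ of \cite[Definition 2.2]{OT1}. To do this I would use the spin structure reviewed in Appendix \ref{AppA}, which provides $\cS$ together with a pairing $\cS\otimes\cS\to\det$-type line. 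Restricting this pairing to the pure spinor line should identify $(\cS^\Lambda)^2$ with $\det\Lambda^*$ up to the orientation trivialisation of $\det E$. The parity claim should follow from the classical fact that two maximal isotropics lie in the same family exactly when their pure spinors have the same chirality, and chirality is determined by the orientation.

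\textbf{Step 4: the $K$-theoretic consequence.} Applying Proposition \ref{lem: IandS} gives
$$
|\Lambda|\cdot[\cS_\Lambda\otimes\sqrt{\det\Lambda}]=|\Lambda|\cdot\mathfrak e(\Lambda)\sqrt{\det\Lambda}.
$$
The right-hand side matches the definition of $\sqrt{\mathfrak e}(E)$ in \cite[Definition 5.3]{OT1}.
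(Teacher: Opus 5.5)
Your Steps 1, 2 and 4 are the paper's argument. Your $\cS^\Lambda$ is the paper's Pfaffian line $\mathrm{Pf}(\Lambda)\subset\cS_E$, and your map $\cS_\Lambda\otimes\cS^\Lambda\to\cS$ is the paper's map $Cl(E,q)\to\sqrt{\det\Lambda}^{-1}\otimes\cS_E$, descended to $\cS_\Lambda$.

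\textbf{The gap is in Step 3: you take the wrong dual.} Step 2 gives $\cS_\Lambda\cong\cS\otimes(\cS^\Lambda)^{\vee}$. With your choice $\sqrt{\det\Lambda}:=(\cS^\Lambda)^\vee$ you therefore get
$$
\cS_\Lambda\otimes\sqrt{\det\Lambda}\;\cong\;\cS\otimes(\cS^\Lambda)^{\otimes -2},
$$
which depends on $\Lambda$, so the claimed isomorphism with $\cS$ does not follow. For the same reason the ``remaining check'' $(\cS^\Lambda)^{\otimes -2}\cong\det\Lambda$ is false.

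A rank-two example shows both failures. Take $E=L\oplus L^*$ with the pairing, and a spin structure $K$ with $K^{2}\cong L$, so $\cS=K\oplus K^{-1}$ and Clifford multiplication by $L$ raises weight by one. Then $\cS^{L}=K$ and $\cS_L\cong\cO\oplus L^*$, and $\cS_L\otimes K\cong\cS$. Your recipe instead gives $\cS_L\otimes K^{-1}\cong K^{-1}\oplus K^{-3}$ for $\Lambda=L$, and $\cS_{L^*}\otimes K\cong K\oplus K^{3}$ for $\Lambda=L^*$.

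The fix is the paper's choice $\sqrt{\det\Lambda}:=\cS^\Lambda=\mathrm{Pf}^*\cO_{\mathbb P(\cS_E)}(-1)$, the tautological line itself. With it, Step 2 gives $\cS_\Lambda\otimes\sqrt{\det\Lambda}\cong\cS_E$ directly. The fact you then need is $(\cS^\Lambda)^{\otimes 2}\cong\det\Lambda$, which the paper cites from Pressley--Segal as $\mathrm{Pf}^*\cO(-2)\simeq\det\varLambda$.

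The mechanism you suggest for that identity would also fail as stated. The scalar spinor pairing vanishes on a pure spinor paired with itself: by Cartan--Chevalley, $\beta(u,v)\neq 0$ exactly when the two maximal isotropics meet trivially. So restricting it to $\cS^\Lambda$ gives zero (in the example, the invariant pairing only pairs $K$ with $K^{-1}$). To prove the identity rather than cite it, use the middle-degree component of the $\mathrm{Spin}$-equivariant identification $\cS\otimes\cS\cong Cl(E,q)\cong\Lambda^\bullet E$. It carries $\cS^\Lambda\otimes\cS^\Lambda$ isomorphically onto the line $\det\Lambda\subset\Lambda^nE$; in the example, $K\otimes K=L\subset E$.

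Your treatment of the parity shift $[|\Lambda|]$ via chirality of pure spinors is fine. It matches the paper's computation of the chirality operator $\omega$ on $\Lambda^{\mathrm{even}}\Lambda^*$.
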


We will prove Proposition \ref{prop: sqcorrection} in Appendix \ref{AppA}. 

When $E$ is spin, we set $\cS_E:=\cS_\Lambda \otimes \sqrt{\det \Lambda}$ for positive $\Lambda$. In fact, $\surd{\det \Lambda}$ exists in $K$-group $K^0(Y,\ZZ)$ with $\ZZ$-coefficients even without spin structure by \cite[Lemma 5.1]{OT1}. Hence the $K$-theoretic class $[S_E]$ is always defined to be $[\cS_\Lambda]\cdot [\surd{\det \Lambda}]$. 
\begin{cor} \label{corsE} Even if $E$ is not spin, we have the equality
$$
[\cS_E] = \sqrt{\mathfrak{e}}(E)\ \in\ K^0\(Y,\ZZ\).
$$
\end{cor}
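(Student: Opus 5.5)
The plan is to reduce to the spin case locally and then glue using the $\ZZ$-coefficients. By definition $[\cS_E]:=[\cS_\Lambda]\cdot[\surd{\det\Lambda}]$, where $\surd{\det\Lambda}\in K^0(Y,\ZZ)$ is the class of \cite[Lemma 5.1]{OT1}. The first step is to apply Proposition \ref{lem: IandS}, which gives $[\cS_\Lambda]=[\Lambda^\bullet\Lambda^*]=\mathfrak e(\Lambda)$ without any spin hypothesis. The identity then becomes
$$
[\cS_E]=\mathfrak e(\Lambda)\cdot\surd{\det\Lambda}\quad\text{in } K^0(Y,\ZZ).
$$
Next I will compare this with the definition of $\sqrt{\mathfrak e}(E)$ in \cite[Definition 5.3]{OT1}. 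There, for a positive maximal isotropic $\Lambda\subset E$ (after passing to the cover of \cite{EG} on which such a $\Lambda$ exists, with pullback injective on $K$-theory), the square root Euler class is defined precisely as $\mathfrak e(\Lambda)\cdot\sqrt{\det\Lambda}$ up to the sign $|\Lambda|$ determined by the orientation. It is $\Lambda^\bullet\Lambda^*\otimes\surd{\det\Lambda}$ with its appropriate twist. Since we take $\Lambda$ positive, the sign is $+1$, and the two sides agree by definition.

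The main obstacle is to make sure that the two independent choices of $\surd{\det\Lambda}$ coincide: the class in \cite[Lemma 5.1]{OT1} used to define $[\cS_E]$, and the one entering \cite[Definition 5.3]{OT1}. If they are literally the same construction, the proof is formal. If instead \cite{OT1} defines $\sqrt{\mathfrak e}(E)$ intrinsically, I would argue as follows. The difference of the two candidate classes becomes zero after pulling back to a cover on which $E$ is spin; such a cover exists, for instance a double cover trivialising the obstruction, and its pullback is injective with $\ZZ$-coefficients because pushforward composed with pullback is multiplication by $2$. On that cover, Proposition \ref{prop: sqcorrection} already gives $[\cS_\Lambda\otimes\sqrt{\det\Lambda}]=\sqrt{\mathfrak e}(E)$ for positive $\Lambda$. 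Moreover, the honest square root line bundle squares to $\det\Lambda$ and is therefore identified with the $\ZZ$-class of \cite[Lemma 5.1]{OT1}, since that class is the unique square root congruent to $1$ in the relevant filtration. Injectivity then yields the equality on $Y$.
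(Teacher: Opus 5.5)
Your main argument is correct and is the paper's: $[\cS_E]$ is defined as $[\cS_\Lambda]\cdot[\surd{\det\Lambda}]$ for positive $\Lambda$, with $\surd{\det\Lambda}$ the class from \cite[Lemma 5.1]{OT1}; Proposition \ref{lem: IandS} turns $[\cS_\Lambda]$ into $\mathfrak e(\Lambda)$; and the result is exactly \cite[Definition 5.3]{OT1}, which uses the same $\surd{\det\Lambda}$, so the proof is formal. Your fallback paragraph is therefore unnecessary, and as written it is inaccurate: the spin obstruction is a degree-two class that a double cover need not kill, and for a double cover $p$ one has $p_*p^*=[p_*\cO]\cdot(-)$ rather than multiplication by $2$, so it is best dropped.
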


We sometimes use $\cS_E$ if it were a module even when $E$ does not admit a spin structure. In this case, we work with an actual module $\cS_\Lambda$ and twist it by $\surd{\det \Lambda}$ in $K$-theory.

\subsection*{Isotropic reduction}
Let $K\subset E$ be an isotropic subbundle. Then we have a new $SO$-bundle $(K^\perp/K, \overline q)$, called the reduction by $K$, which defines a new Clifford algebra $Cl(K^\perp/K, \overline q)$. By Corollary \ref{corsE} and \cite[Equation (106)]{OT1}, we have a relationship between the irreducible Clifford modules $\cS_E$ and $\cS_{K^\perp/K}$,
$$
[\cS_E] = [\cS_{K^\perp/K}] \cdot [\mathfrak{e}(K)\otimes \sqrt{\det K}]\ \in\ K^0\(Y,\ZZ\).
$$

\subsection{Clifford factorisation}
Let $s\in H^0(E)$ be a section, and $F$ be a locally free of finite rank Clifford module. Then the Clifford multiplication by $s$ defines a matrix factorisation
\beq{eq: fact}
(F,s):=(\cdots F_+ \xrightarrow{s\cdot} F_- \xrightarrow{s\cdot} F_+ \to \cdots) \in \MF(Y, q(s,s))_{Z(s)}
\eeq
of $q(s,s)\in \mathcal O_Y$. 

\begin{lem}
\label{lem: support}
The matrix factorisation $(F, s)$ \eqref{eq: fact} is supported on $Z(s)$. In particular, when $s$ is isotropic, $(F, s)$ is a periodic complex exact off $Z(s)$.
\end{lem}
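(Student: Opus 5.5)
The plan is to show directly that $(F,s)$ is locally contractible on $Y\take Z(s)$, and then invoke Lemma \ref{lem: coeqcon} to conclude that it is absolutely acyclic there. Local contractibility is local on $Y\take Z(s)$, so I will fix a point where $s\neq 0$. The construction splits into two cases.

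\textbf{Case $q(s,s)$ invertible.} Near the point where $q(s,s)\neq 0$, the Clifford relation $s\cdot s=-q(s,s)$ (with the sign conventions of $Cl(E,q)$) shows that $s$ is invertible in $Cl(E,q)$. Take the odd endomorphism
$$
h:=-q(s,s)^{-1}\, s\cdot .
$$
Then $d h+h d=2\,s\cdot h=2\,\mathrm{id}$. After rescaling by $\tfrac12$ (or using $h$ on only one side, depending on how $D$ is normalised on odd maps), this is a contracting homotopy.

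\textbf{Case $q(s,s)=0$, $s\neq 0$.} This case needs an auxiliary vector instead of $s$ itself. Since $q$ is nondegenerate and $s$ is a nowhere vanishing section locally, I can choose, on a small open set, a local section $t$ of $E$ with $q(s,t)$ invertible; nondegeneracy guarantees such a $t$ exists locally. The Clifford relation polarises to
$$
s\cdot t+t\cdot s=-2q(s,t).
$$
Set $h:=-\big(2q(s,t)\big)^{-1}\,t\cdot$. Then $[d,h]=s h+h s=\mathrm{id}$, so $h$ is a contracting homotopy.

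\textbf{Uniform argument.} The second construction covers the first case as well, since it works at any point where $s\neq 0$: there one can always choose $t$ with $q(s,t)$ a unit nearby. So I will present the single uniform argument with $t$. The only points to check are the Clifford identity above and that $h$ has odd degree with the correct sign for the differential $D=[d,-]$ on $\Hom^\bullet$. Both are routine.

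\textbf{Consequences.} By Lemma \ref{lem: coeqcon}, $(F,s)|_{Y\take Z(s)}$ is absolutely acyclic, so $(F,s)$ is supported on $Z(s)$. For the second claim: when $s$ is isotropic, $q(s,s)=0$ and $(F,s)$ is an honest $2$-periodic complex. A contractible complex is exact, so local contractibility on $Y\take Z(s)$ gives exactness off $Z(s)$.

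The main care point is the normalisation of the Clifford relation, where the factor of $2$ and the sign depend on whether $q(v,v)$ denotes the bilinear or the quadratic form. This affects only the scalar in $h$, not the argument.
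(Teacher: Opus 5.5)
Your proposal is correct and follows the paper's proof: off $Z(s)$, choose locally a section $t$ with $q(s,t)$ invertible, use the polarised Clifford relation to make a scalar multiple of $t\cdot$ a contracting homotopy, and conclude via Lemma \ref{lem: coeqcon}. Your normalisation $h=-(2q(s,t))^{-1}\,t\cdot$ is more careful than the paper's, which glosses over the factor $-2$ coming from the convention $v\cdot v=-q(v,v)$; your separate case with $q(s,s)$ invertible is unnecessary, as you yourself note.
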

\begin{proof}
Passing to an \'{e}tale open subset, we can find a section $\lambda$ of $E$ outside $Z(s)$ such that $q(s, \lambda) =1$. The Clifford relation says $(s \cdot \lambda + \lambda \cdot s)\cdot v = v$, which means that $\lambda : F_\pm \to F_\mp$ is a null-homotopy for the identity. Hence it is locally contractible off $Z(s)$, and supported on $Z(s)$ by Lemma \ref{lem: coeqcon}. 
\end{proof}

Now we assume that $E$ is spin. Given a section $s\in \Gamma(E)$, we define the \emph{Clifford factorisation}
$$
\(\cS_E, \sqrt{-1}s\) \in \MF\(Y, -q(s,s)\)_{Z(s)}
$$
as in \eqref{eq: fact}. Tensoring with it defines a functor $\mf (Y,q(s,s))\to \mf (Y,0)_{Z(s)}$, which gives a homomorphism
\beq{S-E}
K\(\mf (Y,q(s,s))\)\To K_0\(Z(s)\).
\eeq

Even if $E$ is not spin, we can define the homomorphism \eqref{S-E} using a maximal isotropic subbundle $\Lambda\subset E$ -- we can use $\cS_\Lambda$ instead of $\cS_E$ and twist it by $[\surd{\det\Lambda}]$ after inverting $2$ in coefficients. In the last paragraph of Appendix \ref{AppA}, we prove that for two $\Lambda_1, \Lambda_2\subset E$, there is a line bundle whose square is $\det \Lambda_1 \otimes \det \Lambda_2^*$. It twists the factorisation $(\cS_{\Lambda_1}, \sqrt{-1} s)$ to give an isomorphism to $(\cS_{\Lambda_2}, \sqrt{-1} s)$. This proves the well-definedness of the homomorphism \eqref{S-E} in case there is no spin. 

Comparing the homomorphism \eqref{S-E} with the ones in \cite{OS,OT1} when $s$ is isotropic, one can prove its equivalence to the localised square root Euler class, $[(\cS_{E}, \sqrt{-1} s)]=\surd{\mathfrak{e}}(E,s)$ defined in \cite[Definition 5.7]{OT1}. It does not make sense to speak of the square of the localised square root Euler class although such a square exists in the non-localised case; see \cite[Proposition 5.4]{OT1}. The following Proposition may clarify how this can be understood even when $s$ is non-isotropic.

\begin{prop}\label{KSfan}
Applying the homomorphism \eqref{S-E} to the factorisation $(\cS_E,s)\in \MF (Y, q(s,s))$, we have
\begin{equation*}
\label{eq: Eulersqformula}
[(\cS_E, \sqrt{-1} s)\otimes (\cS_E, s)] = (-1)^n \mathfrak{e}(E, s)\ \in\ K^0\(Z(s), \ZZ\),  
\end{equation*}
where $\mathfrak{e}(E, s)$ is the localised $K$-theoretic Euler class in \cite{YP}.
\end{prop}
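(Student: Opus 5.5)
Since both sides are classes in $K^0(Z(s),\ZZ)$ and all the constructions involved commute with pullback along étale or smooth covers, I will reduce to the split case. I do not expect $\cS_E$ to split globally, so I will work with $\cS_\Lambda$ for a maximal isotropic subbundle $\Lambda\subset E$ and carry the twist by $\surd{\det\Lambda}$ separately. That twist appears twice, once in each factor, giving $\det\Lambda$ in total. To obtain $\Lambda$, I will pass to the cover of \cite{EG} on which a maximal isotropic subbundle exists; pullback along it is injective on the relevant $K$-groups (or one argues via the usual splitting principle). Then I will deform $E$ to $\Lambda\oplus\Lambda^*$ exactly as in the proof of Proposition \ref{lem: IandS}. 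The catch is that the section $s$ must deform along with $E$, and this is delicate because the localised class depends on $s$.

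The plan for that step is to apply the homotopy from Proposition \ref{lem: IandS} only to the bundle and the Clifford structure on $E$, keeping the section fixed. Concretely, the tensor product $\cS_E\otimes\cS_E$, with the product Clifford multiplication by $(\sqrt{-1}s,s)$, is a Clifford module for $E\oplus E$ with the form $q\oplus(-q)$, acted on by the section $(s,s)$. This section is isotropic, since $-q(s,s)+q(s,s)=0$. The space $E\oplus E$ with the form $q\oplus -q$ has the canonical maximal isotropic diagonal $\Delta=\{(v,v)\}$, and the antidiagonal $\{(v,-v)\}$ is complementary to it and canonically identified with $\Delta^*\cong E^*\cong E$. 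The section $(s,s)$ lies in $\Delta$. Hence $E\oplus E\cong\Delta\oplus\Delta^*$ with $(s,s)$ a section of $\Delta\cong E$. The product module is irreducible of rank $2^{2n}$, so up to a line bundle it is $\cS_\Delta\cong\Lambda^\bullet\Delta^*$. The corresponding factorisation is then the Koszul complex $(\Lambda^\bullet E^*,s)$, twisted by some line bundle $\mathcal L$. Its class is the localised Euler class $\mathfrak e(E,s)$ of \cite{YP}, multiplied by $[\mathcal L]$.

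The main work, and the step I expect to be hardest, is to identify $\mathcal L$ together with the overall sign. I will identify $\mathcal L$ by computing $\Hom_{Cl(E\oplus E)}(\Lambda^\bullet\Delta^*,\cS_\Lambda\otimes\cS_\Lambda)$. The claim is that this Hom space is $\det\Lambda^*$ up to a parity shift, so that it cancels the $\det\Lambda$ coming from the two square roots. I will check this fibrewise in a basis $E=\Lambda\oplus\Lambda^*$. Vacuum vectors are those annihilated by $\Delta$. The vacuum of $\cS_\Lambda\otimes\cS_\Lambda$ sits in degree $\Lambda^{\mathrm{top}}\Lambda^*\otimes\Lambda^0$ (the pairing $\Lambda\otimes\Lambda^*$ combines the two factors), which produces $\det\Lambda^*$ in top degree $n$. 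That parity yields the sign $(-1)^n$. Any $\sqrt{-1}$ factors will be absorbed by rescaling the generators.

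As a cross-check, I will restrict to the complement of $Z(s)$, where both sides vanish, and to the case $s=0$. There the identity reads $\sqrt{\mathfrak e}(E)^2=(-1)^n\mathfrak e(E)$, which is \cite[Proposition 5.4]{OT1}. This pins down the sign convention independently.
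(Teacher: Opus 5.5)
Your argument is correct, and it takes a different route from the paper's, though the two are closely related.

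\emph{Comparison.} The paper writes down an explicit isomorphism $\cS_E\otimes\cS_E\cong\cS_\Lambda\otimes\cS_{\Lambda^*}[n]\to Cl(E,q)[n]$, $a\otimes b\mapsto a\cdot b^t$. Here Proposition \ref{prop: sqcorrection}, applied to $\Lambda$ and $\Lambda^*$, absorbs the square-root twists and produces the shift $[n]$. The paper then checks in an orthonormal frame that the transported differential $c\mapsto s\cdot c\pm c\cdot s^t$ is contraction with $s$.

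You instead treat the graded tensor product as an irreducible module over $Cl(E\oplus E)$ for the split form $(-q)\oplus q$. You use the fact that $(s,s)$ lies in the Lagrangian diagonal $\Delta$, whose canonical complement is the antidiagonal. Uniqueness of irreducible modules up to a graded line bundle then gives $\cS_\Delta\otimes\mathcal L\cong\Lambda^\bullet\Delta^*\otimes\mathcal L$, with $\Delta$ acting by contraction. So the Koszul shape is automatic, and all the sign and twist bookkeeping is concentrated in computing the vacuum line $\mathcal L$.

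The two pictures are the same in disguise: $Cl(E,q)$ with its left/right graded action is exactly $\cS_\Delta$, generated by $1$, which the diagonal annihilates. The paper's route gives an explicit global isomorphism with no separate identification of $\mathcal L$. Yours gives a coordinate-free reason for the interior-product formula.

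\emph{Two corrections.}
\begin{enumerate}
\item Drop the deformation of $E$ to $\Lambda\oplus\Lambda^*$ ``keeping the section fixed''. Your actual argument never uses it, and as stated it does not make sense, since $s$ is a section of the bundle being deformed.
\item The vacuum of $\cS_\Lambda\hat\otimes\cS_\Lambda$ does not sit in $\Lambda^{\mathrm{top}}\Lambda^*\otimes\Lambda^0$. Already for $n=1$, with $\lambda\mu+\mu\lambda=1$ as in Appendix \ref{AppA}, it is spanned by $\mu\otimes 1-\sqrt{-1}\,1\otimes\mu$.
\begin{itemize}
\item In general it is a line of total degree $n$, spread over $\bigoplus_k\Lambda^k\Lambda^*\otimes\Lambda^{n-k}\Lambda^*$, of $GL(\Lambda)$-weight $\det\Lambda^*$.
\item Its projection to the $(n,0)$ piece of the associated graded of the word-length filtration is canonical and an isomorphism.
\item A fibrewise check alone only gives rank and parity. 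This canonical projection (or a naturality/weight argument) is what identifies $\mathcal L\cong\det\Lambda^*$ in parity $n$ globally.
\item With that identification, $\mathcal L$ cancels the $\det\Lambda$ from the two square roots and leaves the shift $[n]$, hence the sign $(-1)^n$.
\end{itemize}
\end{enumerate}
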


\begin{proof}
It is enough to prove that $(\cS_E, \sqrt{-1} s)\otimes (\cS_E, s)$ is isomorphic to the Koszul complex of $(E, s)$ shifted by $n$. We first explain what is going on after forgetting the section $s$ -- we have an isomorphism of modules between $\cS_E\otimes \cS_E$ and $Cl(E,q)[n]$ in this case. By Proposition \ref{prop: sqcorrection}, we have an isomorphism $\cS_E\otimes \cS_E\cong \cS_\Lambda \otimes \cS_{\Lambda^*}[n]$. On the other hand, the multiplicative morphism $Cl(E,q)\otimes \det \Lambda\to Cl(E,q)$ has the kernel $\langle \Lambda\rangle\otimes \det \Lambda$, inducing an injection $\cS_\Lambda\otimes \det L\into Cl(E,q)$. Hence we have a morphism $\cS_\Lambda \otimes \cS_{\Lambda^*}[n]\into Cl(E,q)^{\otimes 2}[n]$. Denoting by $(-)^t$ the operation transposing words in $Cl(E,q)$, we then obtain graded morphism, 
\begin{equation}
\label{eq: sqiso}
\cS_E \otimes \cS_E \rt{\sim} \cS_\Lambda \otimes \cS_{\Lambda^*}[n] \rt{(a, b) \mapsto a\cdot b^t} Cl(E, q)[n].
\end{equation}
It is surjective, and hence isomorphism by counting dimensions.

Now we would like to keep track of the differentials of the 2-periodic complex associated with the first module in \eqref{eq: sqiso}, in order to see that the induced 2-periodic complex structure on $Cl(E,q)[n]$ becomes the Koszul complex shifted by $n$. One can compute the 2-periodic complex $(\cS_E, \sqrt{-1} s)\otimes (\cS_E, s)$ is isomorphic to
\begin{equation*}
    \(
    \cS_E^+\otimes \cS_E^+ \oplus \cS_E^- \otimes \cS_E^- \xleftrightharpoons[D_+]{D_-} \cS_E^-\otimes \cS_E^+ \oplus \cS_E^+ \otimes \cS_E^-
    \), 
\end{equation*}
\begin{equation*}
    D_+ = 
    \begin{pmatrix}
    s \otimes 1 & -1\otimes s\\
    -1\otimes s & s \otimes 1
    \end{pmatrix}, \quad  
    D_- = 
    \begin{pmatrix}
    s \otimes 1 & 1\otimes s\\
    1\otimes s & s \otimes 1
    \end{pmatrix}.
\end{equation*}
To get this, we replaced one factorisation $(\cS_E, \sqrt{-1} s)$ by an isomorphic one
$$
\xymatrix{
\cS_E^+ \ar@/_1pc/[r]_-{-s} & \cS_E^- \ar@/_1pc/[l]_-{s}
}.
$$
We compute that $D$ corresponds to the below differential on $Cl(E,q)[n]$ under the identification \eqref{eq: sqiso},
$$
D(c) = (s\cdot c) + (-1)^{\vert c\vert+1} (c\cdot s^t), \quad c\in Cl(E,q),\  \vert c\vert = \textrm{word length of $c$}.
$$
With local basis of $E$, one can check that this $D$ is the interior product by the section $s$. Precisely, with a local orthonormal basis $\{e_1, \ldots, e_{2m}\}$ of $E$, we express $s$ and a local basis of $Cl(E, q)$ using $e_i$'s;
$$
s=\sum s_ie_i, \quad e_I = e_{i_1}\cdots e_{i_k}, \ I=\{i_1< \ldots < i_k\}\subset \{1, \ldots, 2m\}.
$$
Using these descriptions, we can compute
\begin{align*}
D(e_I) & = \sum_i\( s_i e_i \cdot e_I + (-1)^{\vert I \vert+1} s_ie_I \cdot e_i \) 
=\sum_{i_j \in I}(-1)^{j-1}s_{i_j} \iota_{e_{i_j}}(e_I)=\iota_s(e_I).
\end{align*}
Therefore, $(Cl(E, q), D)$ equals the (2-periodisation of) Koszul complex associated with $(E,s)$. 
\end{proof}

\subsection*{Isotropic reduction}
We slightly change our notation. Let $Z$ be a DM stack and $(E,q)$ be $SO(2n,\C)$ bundle over $Z$. Denoting by $p:E\to Z$ the projection, the tautological section $\tau_E$ of $p^*E$ cuts out the zero section $Z\subset E$. Now let $C\subset E$ be a cone over $Z$. Then the quadruple $(C,q=q(\tau_E,\tau_E)|_C, p^*E|_C,\tau_E|_C)$ plays a role of the previous $(Y,f,E,s)$ so that we have the homomorphism \eqref{S-E} constructed by using $(\cS_{p^*E\vert_C}, \sqrt{-1}\tau_E)$. We denote this factorisation or homomorphism by
\beq{Svir}
\cS^{\vir}_E\in \MF(C,-q)\quad\text{or}\quad K\(\mf(C, q)\) \To K_0\(Z,\ZZ\).
\eeq

Suppose there is an isotropic subbundle $K \subset E$ such that $K\subset C\subset K^\perp$. Then the reduced cone $C/K$ lies in the reduction $K^\perp/K$. Denoting by $\overline{q}$ the induced quadratic form on the reduction, we may understand it as a quadratic function on $C/K$. Its pullback by the quotient morphism $\pi: C\to C/K$ becomes $q$. So we have two homomorphisms
$$
\cS_{K^{\perp}/K}^{\vir},\ \ \pi^*(-)\otimes \cS_E^{\vir}: K(\mf (C/K,\overline{q})) \To K_0(Z,\ZZ).
$$

\begin{prop}
\label{prop: factisored}
The two operations above are equivalent up to twisting,
\begin{equation*}
\cS_{K^{\perp}/K}^{\vir} \otimes \sqrt{\det K}=\pi^*(-)\otimes \cS_E^{\vir}: K\(\mf(C/K, \overline{q})\)\To K_0\(Z,\ZZ\).
\end{equation*}
\end{prop}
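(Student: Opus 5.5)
The plan is to compare the two factorisations on $C$ directly, at the level of objects of $\MF(C,-q)_Z$, working locally on a cover where a maximal isotropic subbundle is available. The target is an isomorphism
$$
\cS^{\vir}_E\cong \pi^*\cS^{\vir}_{K^\perp/K}\otimes \sqrt{\det K}\otimes (\text{Koszul factor}),
$$
where the Koszul factor becomes trivial after pushing to $Z$.

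First, choose (on a suitable cover, as in the footnote citing \cite{EG}) a maximal isotropic $\Lambda\subset E$ with $K\subset\Lambda\subset K^\perp$. Then $\overline\Lambda:=\Lambda/K$ is maximal isotropic in $K^\perp/K$. Locally split $E\cong K\oplus K^*\oplus (K^\perp/K)$ orthogonally, with $K\oplus K^*$ carrying the pairing, so that
$$
\cS_\Lambda\cong \Lambda^\bullet K^*\otimes \cS_{\overline\Lambda}
$$
as $\Z/2$-graded Clifford modules, with the graded tensor product. This is the module-level form of the isotropic reduction identity $[\cS_E]=[\cS_{K^\perp/K}]\cdot[\mathfrak e(K)\otimes\sqrt{\det K}]$, using $\sqrt{\det\Lambda}=\sqrt{\det\overline\Lambda}\otimes\sqrt{\det K}$.

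Second, decompose the tautological section on $C$. Since $C\subset K^\perp$, the section $\tau_E|_C$ has no $K^*$-component. Its $K^\perp/K$-component is $\pi^*\tau_{K^\perp/K}$, and its $K$-component is some section $\kappa$ of $K$. Clifford multiplication then splits as
$$
(\cS_\Lambda,\sqrt{-1}\tau_E)\cong (\Lambda^\bullet K^*,\sqrt{-1}\kappa)\otimes \pi^*(\cS_{\overline\Lambda},\sqrt{-1}\tau_{K^\perp/K}).
$$
In this expression $\kappa\in K$ acts on $\Lambda^\bullet K^*$ by contraction, and $\kappa$ is isotropic, so the first factor is a genuine 2-periodic Koszul complex with potential $0$. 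The potentials match because $q(\tau_E,\tau_E)|_C=\pi^*\overline q$.

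Third, show that tensoring with $(\Lambda^\bullet K^*,\kappa)$ and pushing forward to $Z$ acts as the identity in $K$-theory, compatibly with $\pi^*$. Because $C\supset K$ is $K$-invariant, $C\to C/K$ is a $K$-torsor-like quotient, and $\kappa$ is the tautological fibre coordinate. The Koszul complex $(\Lambda^\bullet K^*,\kappa)$ therefore resolves $\cO$ of the section $C/K\hookrightarrow C$ given by $\kappa=0$ (one can rather use the zero section of $K$ over $C/K$ inside $C$). Hence $\pi^*(G_\bullet)\otimes(\Lambda^\bullet K^*,\kappa)$ is quasi-isomorphic, in $\mf(C,q)$, to the pushforward of $G_\bullet$ along this section. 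Composing with $\pi$ recovers $G_\bullet$, and the pushforward to $Z$ agrees.

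The main obstacle is precisely this last identification. It requires a rigorous Koszul-resolution statement for factorisations, namely that the Koszul factorisation of a regular isotropic section equals the pushforward from its zero locus in $D^{\mathrm{abs}}$. It also requires handling that $C\to C/K$ is only a quotient of cones (free $K$-action), which means checking regularity of $\kappa$ fibrewise. Finally, one must track signs and the $\sqrt{-1}$ and $\sqrt{\det K}$ twists, and glue the local choices of $\Lambda$, which the paper treats via the $\ZZ$-coefficients argument in Appendix \ref{AppA}.
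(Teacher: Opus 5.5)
Your local computation is sound, and it is essentially the second half of the paper's proof. In the split situation $E=(K\oplus K^*)\perp W$, $C=C/K\oplus K$, the paper likewise does three things:
\begin{itemize}
\item it writes $\cS^{\vir}\cong \cS^{\vir}_{K^\perp/K}\boxtimes \cS^{\vir}_{K\oplus K^*}$;
\item it identifies the second factor with the Koszul factorisation $(\Lambda^\bullet K^*,\tau_K)\otimes\sqrt{\det K}$;
\item it uses that $\mathrm{pr}_1^*(-)\otimes\mathfrak{e}(K,\tau_K)$ becomes the identity after pushforward.
\end{itemize}

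The gap is the passage from this local picture to an identity in $K_0(Z,\ZZ)$. Three ingredients of your argument depend on an orthogonal splitting $E\cong (K\oplus K^*)\perp K^\perp/K$: your section $\kappa$, the identification $C\cong C/K\oplus K$ (i.e.\ the section $C/K\into C$ that your Koszul complex resolves), and the graded tensor decomposition of $\cS_\Lambda$. Such a splitting means a splitting of $0\to K\to K^\perp\to K^\perp/K\to 0$ together with an isotropic complement of $K^\perp$, and these exist only locally. Different splittings give different sections $C/K\into C$, so your local isomorphisms are not canonical and do not glue to a global isomorphism in $D^{\mathrm{abs}}$. Nor can an equality of classes be checked on an open or \'etale cover, because $K_0$ is not a sheaf. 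The Appendix \ref{AppA} argument you appeal to only shows that $[(\cS_\Lambda,\sqrt{-1}s)]\cdot[\sqrt{\det\Lambda}]$ is independent of $\Lambda$. It says nothing about dependence on the splitting, which is what your third step actually uses.

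The missing idea is to degenerate to the split model rather than choose a splitting. The paper builds two canonical families over $\AA^1$:
\begin{itemize}
\item $\mathcal{E}$ over $Z\times\AA^1$ is the isotropic reduction of $K\xrightarrow{i\oplus(\lambda,0)}E\oplus(K\oplus K^*)$. It equals $E$ for $\lambda\neq 0$ and $(K^\perp/K)\oplus(K\oplus K^*)$ at $\lambda=0$.
\item $\mathcal{C}$ is the quotient of $C\oplus K$ by $K\xrightarrow{j\oplus(\lambda,0)}C\oplus K$. It degenerates $C$ to $C/K\oplus K$.
\end{itemize}
The resulting family $\pi^*F_\bullet\otimes\cS^{\vir}$ is flat over $\AA^1$. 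Then $\AA^1$-homotopy invariance of $K$-theory of coherent sheaves, together with compatibility of cohomology with Gysin pullback to fibres \cite[Lemma 2.15]{OS}, reduces the claim to the central fibre. There the splitting is canonical and your computation holds globally.

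Alternatively, you could keep your route by first pulling back along the iterated affine bundle $\widetilde Z\to Z$ parametrising isotropic complements of $K^\perp$. On it a universal splitting exists, and flat pullback is an isomorphism on $K_0$ of coherent sheaves by homotopy invariance. Some such global device must be supplied, and your proposal does not supply one.
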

\begin{proof}
Our strategy is to construct a family of $SO(2n,\C)$ bundles $\mathcal{E}$ over $Z\times \AA^1$ degenerating from $E$ to $\(K^\perp/K\)\oplus (K\oplus K^*)$, where the quadratic form on the right-hand side is $\overline{q}\oplus \mathrm{pair}_K$ with isotropic subbundle $K\subset \mathcal{E}$ degenerating form the given $K\subset E$ to 
$$
K\rt{0\oplus (1,0)} \(K^\perp/K\)\oplus (K\oplus K^*).
$$
We also construct a family of cones $\cC$ inside $\mathcal{E}$ degenerating from $C$ to $C/K\oplus K$ such that $K\subset \cC\subset K^\perp$.

We explain how we construct them later. Instead we explain how we can prove Proposition using them. The families defines a deformation of factorisations
$$
\cS_E^{\vir}\in \MF(C,-q)_Z\ \rightsquigarrow\ \cS_{(K^\perp/K)\oplus (K\oplus K^*)}^{\vir}\in \MF(C/K\oplus K,-\overline{q}\oplus 0)_Z.
$$ 
This induces, for $F_\bullet \in \mf (C/K,\overline{q})$, a deformation of $2$-periodic complexes
$$
\pi^*F_\bullet\otimes \cS_E^{\vir}\in \mf(C,0)_Z\ \rightsquigarrow\ \mathrm{pr}_1^*F_\bullet\otimes  \cS_{(K^\perp/K)\oplus (K\oplus K^*)}^{\vir}\in \mf(C/K\oplus K,0)_Z,
$$
which is flat over $\AA^1$. The $\AA^1$-homotopy invariance of $K$-groups of coherent sheaves tells us that the Gysin pullback to fibre $t\in \AA^1$ of its (signed) cohomology is independent of $t$. Since taking cohomology commutes with Gysin pullback to fibre \cite[Lemma 2.15]{OS}\footnote{\label{OS215}\cite[Lemma 2.15]{OS} applies to a $2$-periodic complex on a trivial family $Y\times \AA^1$, but the proof works well for a complex on any $\cY$ over $\AA^1$.}, the flatness guarantees that the cohomology of each fibre is independent. Hence, it is enough to prove that 
$$
\cS_{K^\perp/K}^{\vir}\otimes \sqrt{\det K}=\mathrm{pr}_1^*(-)\otimes \cS_{(K^\perp/K)\oplus (K\oplus K^*)}^{\vir},
$$
as homomorphisms from $K(\mf(C/K,\overline{q}))$ to $K_0(Z,\ZZ)$. Here, $\mathrm{pr}_1$ denotes the first projection $C/K\oplus K\to C/K$. Then the first upshot is that the factorisation is decomposed into
$$
\cS_{(K^\perp/K)\oplus (K\oplus K^*)}^{\vir}\cong \cS_{K^{\perp}/K}^{\vir}\boxtimes \cS_{K\oplus K^*}^{\vir}.
$$
The second upshot is the latter one $\cS_{K\oplus K^*}^{\vir}\in \MF(\mathrm{tot}K,0)$ is the Koszul factorisation $(\Lambda^\bullet K^*,\tau_K)\otimes \surd{\det K}=\mathfrak{e}(K,\tau_K)\otimes \surd{\det K}$; see Propositions \ref{lem: IandS}, \ref{prop: sqcorrection} and \eqref{eq: fact}, where $\mathfrak{e}$ denotes the $K$-theoretic Euler class and $\tau_K$ is the tautological section of $K$. Hence as homomorphisms from $K(\mf (C/K,\overline{q}))$ to itself, we have
$$
\mathrm{pr}_1^*(-)\otimes \cS_{K\oplus K^*}^{\vir}= \mathrm{pr}_1^*(-)\otimes \mathfrak{e}(K,\tau_K) \otimes  \sqrt{\det K}= \sqrt{\det K},
$$
providing the proof.

It remains to construct such families described above. The bundle $\mathcal{E}$ is taken to be the isotropic reduction of 
$$
K\rt{i\oplus (\lambda,0)}E\oplus (K\oplus K^*),
$$
where $i:K\into E$ is the given isotropic subbundle and $\lambda$ is the coordinate on $\AA^1$. Then the isotropic subbundle $K\subset \mathcal{E}$ is given by the quotient of
$$
K\rt{1\oplus (\lambda,0)} K\oplus (K\oplus 0)\cong K\oplus K.
$$
The family $\cC$ of cones is constructed as the quotient of
$$
K\rt{j\oplus (\lambda,0)} C\oplus (K\oplus 0)\cong C\oplus K,
$$
where $j:K\into C$ denotes the embedding.
\end{proof}

\section{Lagrangian class in $K$-theory}
\subsection{Normal form of the tangent complex $T_{L/U}$}\label{sect:normalform}
We fix a critical model $(U,f)$ of $M\cong \mathrm{crit}(f)$ and let $p:L\to M$ be a $(-1)$-Lagrangian space. We assume that
\begin{itemize}
\item $L$ is a quasi-compact Artin stack with affine diagonal, 
\item $L\to M$ is a DM morphism,
\item $L$ has the resolution property -- every coherent sheaf on $L$ is a quotient of a locally free sheaf of finite rank, and
\item $L\to U$ is oriented and has even virtual dimension. 
\end{itemize}
On $L$ we have the following exact triangle of tangent complexes
\beq{t}
T_{L/M}\To T_{L}\To p^*T_{M}.
\eeq
Since $L$ is $(-1)$-shifted Lagrangian, there is another exact triangle
\beq{l}
T_{L}^\vee[-2]\To T_{L}\To p^*T_{M}.
\eeq
Comparing \eqref{t} and \eqref{l}, we have $T_{L/M}\cong T_L^\vee[-2]$. The last term $p^*T_M$ in \eqref{l} is represented by the symmetric complex $\{d^2f: p^*(T_U|_M)\to p^*(\Omega_U|_M)\}$. Using this, we would like to find a {\em normal form} of the tangent complex $T_L$ of the $(-1)$-shifted Lagrangian $L$.

Consider a composition of complexes $T_{L}\to p^*T_{M}\to p^*(T_U|_M)$ whose cocone is quasi-isomorphic to $T_{L/U}$. Then from the diagram of exact triangles
$$
\xymatrix{
& p^*(\Omega_U|_M)[-1] \ar@{=}[r]\ar[d] & p^*(\Omega_U|_M)[-1]\ar[d] \\
T_{L} \ar@{=}[d]\ar[r] &p^*T_{M} \ar[d]\ar[r] & T_{L/M}[1]\cong T_L^\vee[-1] \ar[d]\\
T_{L} \ar[r] & p^*(T_U|_M)\ar[r] & T_{L/U}[1]\cong T_{L/U}^\vee[-1],\\
}
$$
we have a shifted symmetric quasi-isomorphism $T_{L/U}\cong T_{L/U}^\vee[-2]$. Applying the method of getting the self-dual forms \cite[Equations (48), (49)]{OT1} for the tangent complex of $(-2)$-symplectic space, we can find a symmetric representative.

\begin{prop}
On $L$, there is a bundle map $a:T\to E$, with $(E,q)$ an even rank $SO$-bundle such that the tangent complex $T_{L/U}$ is represented by
\beq{n2}
T_{L/U}\=\left\{T\rt{a} E\rt{a^*} T^*\right\}.
\eeq
\end{prop}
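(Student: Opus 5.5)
We follow the Oh--Thomas construction of self-dual representatives for $(-2)$-shifted symplectic complexes \cite[Equations (48), (49)]{OT1}, with the symmetric quasi-isomorphism $\theta:T_{L/U}\cong T_{L/U}^\vee[-2]$ obtained above playing the role of the $(-2)$-symplectic form.

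First we fix the amplitude. Since $L\to M$ is a DM morphism, $T_{L/M}$ has no cohomology in degree $-1$. Hence $T_{L/U}$, which sits in a triangle $T_{L/M}\to T_{L/U}\to p^*(\Omega_U|_M)[-1]$ (reading off the diagram), is concentrated in degrees $[-1,2]$ at worst. Self-duality $T_{L/U}\cong T_{L/U}^\vee[-2]$ then forces amplitude $[0,2]$, with $h^0\cong (h^2)^*$ up to the identification. Using the resolution property of $L$, we represent $T_{L/U}$ by a three-term complex of locally free sheaves $\{B_0\to B_1\to B_2\}$ in degrees $0,1,2$. To do this, we resolve the dual (perfect of amplitude $[-2,0]$) by surjecting locally free sheaves onto the top cohomology/cocycles, and truncate to length three, which is possible because the Tor-amplitude is $[0,2]$. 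Next we take the cone of the composite $B_\bullet\to B_\bullet^\vee[-2]$ given by $\theta$. Following \cite[(48)]{OT1}, we replace $B_\bullet$ by the quasi-isomorphic complex
$$
\left\{B_0\To B_1\oplus B_2^{*}\To B_0^{*}\right\}
$$
or a variant thereof. This makes the complex term-by-term of the shape $T\to E\to T^*$ with $T=B_0$, and the chain map representing $\theta$ becomes an honest isomorphism of complexes $\{T\to E\to T^*\}\to\{T\to E^*\to T^*\}$ whose middle term gives a nondegenerate pairing on $E$.

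The main step, and the expected obstacle, is making the middle pairing $q$ symmetric and the differentials adjoint to each other, i.e.\ $d_2=a^*$ for $d_1=a$. The map $\theta$ is symmetric only up to homotopy. We would try the trick from \cite[(49)]{OT1}: replace the chain-level map by its symmetrisation $\tfrac12(\theta+\theta^\vee[-2])$, which is still a quasi-isomorphism since $\theta$ is homotopy-symmetric. Its middle component $E\to E^*$ is then symmetric, and the homotopy is absorbed by modifying the splitting of $E$. Nondegeneracy of the middle component is the delicate point, and we would ensure it by enlarging $E$ by a hyperbolic summand $F\oplus F^*$ together with an acyclic piece $F\xrightarrow{=}F$, as in \cite{OT1}. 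The outer maps are isomorphisms after this adjustment, so we can identify the last term with $T^*$ and the last differential with $a^*$ using $q$. Finally, $q(a,a)=0$ holds automatically since $a^*\circ a=0$ in the complex.

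It remains to see that $E$ is an $SO$-bundle of even rank. Even rank follows from the parity: the rank of $T_{L/U}$ is $2\,\mathrm{rk}\,T-\mathrm{rk}\,E$ and is assumed even. The orientation on $T_{L/U}$ induces $\det E\cong\cO$ compatibly with $q$, giving the reduction of structure group to $SO$. If needed, the rank parity can also be fixed by adding a trivial rank-one summand to $T$ together with a hyperbolic pair in $E$.
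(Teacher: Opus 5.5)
Your overall route matches the paper's. The paper does not rebuild the self-dual representative: it invokes the construction of \cite[Proposition 4.1]{OT1} via the resolution property. It gets three terms from $L\to M$ being DM, even rank of $E$ from the even relative virtual dimension ($\mathrm{rk}\,T_{L/U}=2\,\mathrm{rk}\,T-\mathrm{rk}\,E$), and the $SO$-structure from the orientation. Your bookkeeping for these three points is fine.

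The trouble is your own reconstruction of the OT1 step, which does not work as written. First, the complex $\{B_0\to B_1\oplus B_2^*\to B_0^*\}$ cannot be quasi-isomorphic to $B_\bullet$ in general. Writing $b_i=\mathrm{rk}\,B_i$, its rank is $2b_0-b_1-b_2$, while $\mathrm{rk}\,B_\bullet=b_0-b_1+b_2$. These agree only if $b_0=2b_2$, which already fails when $B_\bullet$ is itself of the form $\{T\to E\to T^*\}$ with $T\neq 0$. A correct version needs a middle term of rank $b_0+b_1-b_2$. Producing it together with a chain-level $\theta$ whose outer components are isomorphisms is exactly the content you skip. Note also that on a non-affine stack $\theta$ is a priori only a derived-category morphism, so even making it a chain map requires replacing the source complex using the resolution property.

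Second, your nondegeneracy discussion targets the wrong step. Suppose a chain map $\{T\to E\to T^*\}\to\{T\to E^*\to T^*\}$ is a quasi-isomorphism with invertible outer components. Then its middle component is automatically an isomorphism: compare the stupid filtrations $T^*[-2]\subset\sigma_{\ge 1}\subset(\text{whole complex})$ and use two-out-of-three twice. So no hyperbolic enlargement is needed for that. The real difficulty is that averaging a merely homotopy-symmetric $\theta$ with $\theta^\vee[-2]$ changes the degree-$0$ component by a term of the form $\tfrac12\, h\circ a$, where $h$ is the homotopy. This can destroy invertibility of the outer maps, so the step ``symmetrise, and the outer maps are isomorphisms'' is unjustified.

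You should either cite \cite[Proposition 4.1]{OT1} directly, as the paper does, or carry out that construction correctly. Also drop your closing remark about fixing parity. Adding $\cO\to\cO^{\oplus 2}\to\cO$ changes $\mathrm{rk}\,E$ by $2$, so the parity of $\mathrm{rk}\,E$ always equals that of the virtual dimension and cannot be adjusted. The even-dimension hypothesis is genuinely needed.
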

We call such a representative \eqref{n2} a {\em normal form of $T_{L/U}$}.

\begin{proof} The construction of a normal form follows the proof of \cite[Proposition 4.1]{OT1} using the resolution property of $L$. The resulting complex is a symmetric three-term complex since $L\to M$ is a DM morphism. The resulting bundle $E$ has even rank because $L\to U$ has even virtual dimension. Moreover, it is an $SO$-bundle since $L\to U$ is oriented.
\end{proof}

Fix a local smooth embedding $L\subset A$ which is smooth over $U$, with defining ideal $I$. Mimicking the proof in \cite[Proposition 4.1]{OT1}, the representative \eqref{n2} of $T_{L/U}$ can be taken to have a representative of the morphism $T^\vee_{L/U}\to \LL_{L/U}$ by a chain map
\beq{nwithL}
\xymatrix{
T_{L/U}^\vee \ar[d] & = & \{\ T\ar[r]^-a & E \ar[r]^-{a^*} \ar@{->>}[d]& T^* \} \ar@{->>}[d] \\
\LL_{L/U} & = & & I/I^2 \ar[r]^-d & \Omega_{A/U}|_L.
}
\eeq
It allows the Behrend-Fantechi cone of $L\to U$ to embed into $E$.

\subsection{$K$-theoretic Lagrangian class}\label{Sec:pullback}
In this and next Sections, we would like to construct a homomorphism
\beq{Lvir}
L^{\vir}_{(U,f)}: K\(\MF(U,f)\)\To K_0\(L,\ZZ\),
\eeq
called the {\em $K$-theoretic Lagrangian class}, or simply {\em Lagrangian class} for short. Then we prove that it is independent of choices of normal forms.

\smallskip
Let us choose a local smooth embedding $L\subset A$, smooth over $U$. By abusing notation, we denote by $f$ the pullback function on $A$ via $A\to U$. 

\begin{lem} \label{Lem:fq}
The pullback function $f$ on $A$ lies in $I_{L/A}^2$, where $I_{L/A}\subset \cO_A$ is the ideal sheaf of $L\subset A$. In particular $f$ lies in $I_{L/U}^2$.\footnote{H. Park pointed out that this lemma can be proved using the fact that $L\to U$ is $f$-locked $(-2)$-symplectic fibration \cite[Corollary 3.13]{Pa}, which avoids the difficulty that \cite[Example 3.6]{JS} is formulated only for schemes.}
\end{lem}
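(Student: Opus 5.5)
The plan is to prove the statement locally, in a model where it becomes a one-line computation, and then transport it to an arbitrary smooth embedding using the independence argument of footnote \ref{footnote3}.

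\emph{Reduction to a local question.} The condition $f\in I^2_{L/A}$ is local on $A$ near $L$. It is also local in the smooth topology. If $\pi:A'\to A$ is smooth and $L'=L\times_A A'$, then flatness of $\pi$ gives $\pi^{-1}I^2_{L/A}\cdot\cO_{A'}=I^2_{L'/A'}$, and faithful flatness on a cover detects membership. So I may pass to a smooth atlas of $L$ and assume, \'etale-locally around a point $x\in L$, that $L$ and the embedding are as nice as needed. By footnote \ref{footnote3}, once $f\in I^2_{L/A_1}$ holds for \emph{some} local embedding $L\subset A_1$ with $A_1$ smooth over $U$ around $x$, it holds for every other such $A_2$. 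The mechanism is to replace $A_1$ by $A_1\times_U A_2$, which smoothly surjects onto $A_2$ compatibly with $L$, and then use $\pi^{-1}I^2_{L/A_2}=I^2_{L/A_1}\cap\pi^{-1}\cO_{A_2}$. The final clause, $f\in I^2_{L/U}$, is then exactly the definition.

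\emph{Existence of one good local model.} I will invoke the Joyce--Safronov cutout model \cite[Example 3.6]{JS}, which is the local form \eqref{-1local}. Around $x$, $L\cong Z(s)$ for a section $s$ of an orthogonal bundle $(E,q)$ on a smooth $V$, where $p:V\to U$ is smooth and $q(s,s)=p^*f$. Take $A_1=V$. The ideal $I_{Z(s)/V}$ is generated by the local components $s_i$ of $s$ in a local frame of $E$. Writing $q(s,s)=\sum q_{ij}s_is_j$ shows that $p^*f\in I^2_{Z(s)/V}$, and the reduction step above then finishes the proof.

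\emph{Main obstacle.} The hard step is justifying the cutout model in the present generality. \cite[Example 3.6]{JS} is formulated for schemes, whereas here $L$ is an Artin stack with a DM morphism to $M$. I would first try Park's result that $L\to U$ is an $f$-locked $(-2)$-symplectic fibration \cite[Corollary 3.13]{Pa}, which supplies the local Darboux-type form $q(s,s)=p^*f$ smooth-locally on $L$ without the scheme assumption. The locality established in the first step then lets this smooth-local statement suffice.

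I do not expect a purely first-order argument from the normal form \eqref{nwithL} to work. The facts $f|_L=0$ and that $df|_L$ lies in the image of $d:I/I^2\to\Omega_{A/U}|_L$ do not force the class $[f]\in I/I^2$ to vanish, since $d$ need not be injective. The quadratic identity $q(s,s)=p^*f$ is the genuinely needed input.
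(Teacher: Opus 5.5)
Your proposal is correct and follows the paper's proof. Both arguments take the Joyce--Safronov cutout model $L\cong Z(s)\subset V$, observe that $p^*f=q(s,s)\in I^2_{L/V}$, and then transfer this to the given $A$.

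The only difference is in that transfer. The paper stabilises $V$ by $\C^{2r}$ so that an open subset of $A$ embeds in $V$ compatibly with $L$, and pulls back $I^2_{L/V}\to I^2_{L/A}$. You instead descend along the smooth surjection $V\times_U A\to A$ using Footnote~\ref{footnote3}. Your appeal to Park's result for the stacky case matches the paper's own footnote to the lemma.
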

\begin{proof} 
A crucial fact we are going to use is that $L$ is a (shifted) Lagrangian of $\mathrm{Crit}(f)$. By \cite[Example 3.6]{JS}, this fact guarantees, locally, there is an ambient space $V\supset L$ with smooth map $p:V\to U$; and quadratic bundle $(E,q)$ on $V$ and section $s$ of $E$ satisfying $q(s,s)=p^*f$, $L\cong Z(s)$. In particular, $p^*f$ lies in $I_{L/V}^2$; see \eqref{-1local}. By multiplying $\C^{2r}$ if necessary, we may assume that $V$ contains an open set of $A$ as a closed subscheme. Letting $i:A\into V$ denote this embedding, we have a morphism $i^*I_{L/V}\to I_{L/A}$\vspace{-0.5mm} which gives $i^*I_{L/V}^2\to I_{L/A}^2$. Hence the function $i^*p^*f$ lies in $I_{L/A}^2$.
\end{proof}

Lemma \ref{Lem:fq} together with Theorem \ref{thm: spfunctor} shows that there exists a specialisation DG functor
$$
\mathrm{sp}: \MF (U,f)\To \mf (C_{L/U},\lambda^{-2}f),
$$
where $C_{L/U}$ is the intrinsic normal cone inside the bundle stack $[E/T]$. We denote by $\pi:C\to C_{L/U}$ the fibre product $C:=C_{L/U}\times_{[E/T]} E$, called the Behrend-Fantechi cone. Lemma \ref{Lem:fq} (or \cite[Corollary 3.1.3]{Pa}) shows that not only is $\lambda^{-2}f$ defined on $C_{L/U}$, but its pullback $\pi^*(\lambda^{-2}f)$ coincides with the quadratic function $q$ under the embedding $C\subset E$. Thus, we have a pullback functor
$$
\pi^*:\mf (C_{L/U},\lambda^{-2}f)\To \mf (C,q).
$$
The twisted composition $\sqrt{\det T^*}\otimes (\pi^*\circ \mathrm{sp})$ induces a homomorphism between $K$-groups, $K(\MF(U,f))\to K(\mf (C,q))$. Composing with the homomorphism $\cS_E^{\vir}$ \eqref{Svir}, we define the $K$-theoretic Lagrangian class \eqref{Lvir}
$$
L^{\vir}_{(U,f)}: K\(\MF(U,f)\)\To K_0\(L,\ZZ\).
$$

\subsection{Independence of the choice of normal form}
In this section we prove the independence. Remember the Lagrangian class $L_{(U,f)}^{\vir}$\vspace{-0.7mm} is defined to be the composition $\cS_E^{\vir}\otimes \surd{\det T^*}\otimes \pi^*\circ \mathrm{sp}$. In the composition, $\cS_E^{\vir}\otimes \surd{\det T^*}\otimes \pi^*$ depends on the choice of normal form \eqref{n2} while the specialisation $\mathrm{sp}$ is well-defined.

\begin{prop}\label{thm:indpullback}
The Lagrangian class $L_{(U,f)}^{\vir}$ is independent of choices of normal forms \eqref{n2}.
\end{prop}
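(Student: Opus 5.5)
We follow the strategy of \cite[Section 4]{OT1} for the independence of the $(-2)$-shifted virtual structure sheaf. The first step reduces to a simple comparison: any two normal forms $T_1\to E_1\to T_1^*$ and $T_2\to E_2\to T_2^*$ of $T_{L/U}$, each compatible with $\LL_{L/U}$ as in \eqref{nwithL}, are dominated by a third one. Using the resolution property, we would construct a normal form $T\to E\to T^*$ together with surjections $T\onto T_i$ and isotropic subbundles $K_i:=\ker(T\to T_i)^*\subset E$, such that $E_i\cong K_i^\perp/K_i$ and the reduction is compatible with the maps to $T^*$. This is the same "roof" argument as in \cite[Proposition 4.1 and Section 4]{OT1}. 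It is therefore enough to treat a single elementary change of this form: $E_1=K^\perp/K$, $T_1=T/K^*$, with $E=E_2$.

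Next we compare the cones. Since $\sp$ lands in $\mf(C_{L/U},\lambda^{-2}f)$ independently of choices, we only have to compare the two pullbacks $\pi_E^*$ and $\pi_{E_1}^*$ to the Behrend--Fantechi cones $C\subset E$ and $C_1\subset E_1$. Because $C_{L/U}\subset[E/T]\cong[E_1/T_1]$, the cone $C$ should satisfy $K\subset C\subset K^\perp$, with $C/K\cong C_1$ and $\pi_E$ factoring as $C\to C/K=C_1\to C_{L/U}$. The inclusion $C\subset K^\perp$ uses the fact that the composite $C\to E\to T^*\to K$ vanishes, since $C$ maps to the cone of $\Omega$, as in \cite{OT1}. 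Under these identifications the quadratic function $q$ on $C$ is the pullback of $\overline q$ on $C/K$.

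Proposition \ref{prop: factisored} then gives
$$\pi^*(-)\otimes\cS^{\vir}_E=\cS^{\vir}_{K^\perp/K}\otimes\sqrt{\det K}$$
on $K(\mf(C_1,\overline q))$. It remains to match the twists. From $T_1=T/K^*$ we get $\sqrt{\det T^*}=\sqrt{\det T_1^*}\otimes\sqrt{\det K}^{-1}$ in $K^0(L,\ZZ)$, which is well defined by \cite[Lemma 5.1]{OT1}. Hence
$$\cS_E^{\vir}\otimes\sqrt{\det T^*}\otimes\pi_E^*=\cS_{E_1}^{\vir}\otimes\sqrt{\det T_1^*}\otimes\pi_{E_1}^*.$$
We also need to check that the orientations agree: the orientation of $E$ induces that of $K^\perp/K$ compatibly with the fixed orientation of $T_{L/U}$, following \cite[(106)]{OT1}. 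The parity conventions of Proposition \ref{prop: sqcorrection} may introduce a sign here that has to be tracked.

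The main obstacle is the first step. We must produce the dominating normal form so that both the $\LL_{L/U}$-compatibility \eqref{nwithL} and the cone inclusions $K\subset C\subset K^\perp$ hold globally on an Artin stack $L$, rather than on the DM stacks treated in \cite{OT1}. We would first try to copy the \cite{OT1} construction verbatim, using the resolution property together with the DM property of $L\to M$, which makes $T_{L/U}$ three-term. A more flexible fallback is to prove independence étale- or smooth-locally and then glue, using the fact that $K_0(L,\ZZ)$ classes built from cohomology of $2$-periodic complexes can be compared on a cover via Proposition \ref{Kcomm}.
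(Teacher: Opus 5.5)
There is a genuine gap in your first step. You assert that any two normal forms are isotropic reductions of a common third one and cite \cite[Section 4]{OT1} for it. The comparison there, which the paper uses, gives something weaker: two normal forms are related by isotropic reductions \emph{together with} a one-parameter family. After reducing to a common $A$, one is left with $\{A\to B_0\to A^*\}$ and $\{A\to B_1\to A^*\}$, joined only by a family $\{A\to B_t\to A^*\}$ over $L\times\AA^1$ (cf.\ \cite[Equation (77)]{OT1}). In general one does not know that $B_0\cong B_1$ as orthogonal bundles compatibly with the maps to $\LL_{L/U}$, and hence with the Behrend--Fantechi cones. The paper obtains such a cone-preserving isomorphism only when $L$ is a projective scheme. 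So your reduction to ``a single elementary change'' is unjustified, and this remaining case is never treated.

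Your handling of a single isotropic reduction does match the paper's: $K\subset C\subset K^\perp$, $C/K\cong C_1$, Proposition \ref{prop: factisored}, then the twist bookkeeping. There is one notational slip. The isotropic subbundle is $K=\ker(T\onto T_1)$ itself, embedded in $E$ via $T\to E$. With $T_1=T/K^*$ as you wrote it, you would get $\sqrt{\det T^*}=\sqrt{\det T_1^*}\otimes\sqrt{\det K}$, which would not cancel the $\sqrt{\det K}$.

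What is missing is the deformation argument for the family case:
\begin{enumerate}
\item Using the local cutout models from the proof of Lemma \ref{Lem:fq}, the cones $C_0\subset B_0$ and $C_1\subset B_1$ are joined by a family of cones in $B_t$ on which $f/\lambda^2$ agrees with the quadratic function $q_t$.
\item Hence $\cS^{\vir}_{B_t}\otimes\sqrt{\det A^*}\otimes\pi_{B_t}^*(\mathrm{sp}(F_\bullet))$ is defined over $L\times\AA^1$.
\item $\AA^1$-homotopy invariance of $K$-theory of coherent sheaves makes its Gysin pullbacks to fibres independent of $t$.
\item Taking cohomology commutes with Gysin pullback to fibres \cite[Lemma 2.15]{OS}.
\item By flatness, the Gysin pullback is the ordinary restriction, which identifies the classes at $t=0$ and $t=1$.
\end{enumerate}

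Your fallback cannot replace this. Classes in $K_0(L,\ZZ)$ are not determined by their restrictions to an \'etale or smooth cover, so independence proved locally does not glue. Proposition \ref{Kcomm} only says that $\mathrm{sp}$ commutes with smooth pullback; it gives no descent statement for $K$-theory classes.
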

\begin{proof}
We base our proof on \cite[Section 4.2]{OT1} in which it is proved that two normal forms are related by a combination of (1) isotropic reductions and (2) a one-parameter family. We say a normal form $A^\bullet=\{A\to B\to A^*\}$ is an isotropic reduction of another normal form $T^\bullet=\{T\to E\to T^*\}$ if there is a bundle $K$ satisfying
$$
A^\bullet\oplus \{K\to K\oplus K^*\to K^*\}\ \cong\ T^{\bullet}
$$
locally. Globally, such $K$ should be a subbundle of $T$ with the quotient isomorphic to $A$ which becomes an isotropic subbundle of $E$ via $T\to E$ whose reduction $K^{\perp}/K$ is isomorphic to $B$. So, if $A^\bullet$ is an isotropic reduction of $T^\bullet$, there are diagrams
\begin{equation}
\label{eq: isoredcompl}
\xymatrix@R=5mm{
T \ar[r] \ar@{=}[d] & (K^\perp) \ar[r] \ar@{_{(}->}[d]& A^*\ar@{_{(}->}[d] &&K  \ar@{_{(}->}[d]  \ar@{=}[r] & K \ar@{_{(}->} [d]&\\
T \ar[r]  & E \ar[r] \ar@{->>}[d]& T^* \ar@{->>}[d] & \textrm{and} & T \ar[r] \ar@{->>}[d]& (K^\perp) \ar@{->>}[d] \ar[r] & A^*\ar@{=}[d]\\
& K^* \ar@{=}[r]& K^*& &  A \ar[r] & B \ar[r] & A^*,\\
}
\end{equation}
(cf. the dual diagrams \cite[Equation (81)]{OT1}). A one-parameter family between two normal forms $A^\bullet_0=\{A\to B_0\to A^*\}$ and $A^\bullet_1=\{A\to B_1\to A^*\}$ is considered only when they have the same $A$ because we only meet this situation. It is a family of normal forms $A^\bullet_t=\{A\to B_t\to A^*\}$ over $L\times \AA^1$ (cf. \cite[Equation (77)]{OT1}).

We first prove that $\cS^{\vir}_E\otimes\sqrt{\det T^*}\otimes \pi^*$ is independent of choices of normal forms related by isotropic reductions. Suppose that the normal form $A^\bullet=\{A\to B\to A^*\}$ is an isotropic reduction of $T^\bullet=\{T\to E\to T^*\}$ with the diagrams \eqref{eq: isoredcompl}. Let us denote by $C_E\subset E$ and $C_B\subset B$ the corresponding Behrend-Fantechi cones, respectively. As discussed in \cite[Proposition 4.5]{OT1}, $C_E$ lies in $K^{\perp}\subset E$ and contains $K$. Its quotient $C_E/K\subset K^{\perp}/K$ maps to $C_B$ via the canonical isomorphism $K^{\perp}/K\cong B$ induced by the middle vertical morphism in the second diagram of \eqref{eq: isoredcompl}. With these facts and the projection maps $\pi_{E}: C_E \to C_{L/U}$ and $\pi_B:C_B \to C_{L/U}$ which intertwine with $\pi_K:C_E\to C_E/K\cong C_B$, we conclude from Proposition \ref{prop: factisored} that
\begin{align*}
\cS_E^\mathrm{vir}\otimes \sqrt{\det T^*}\otimes \pi^*_{E}(\mathrm{sp}(\Fd))   = &\cS_E^\mathrm{vir}\otimes \sqrt{\det T^*} \otimes (\pi^*_K \circ \pi^*_{B})(\mathrm{sp}(\Fd))  \\
= &\cS_{B}^\mathrm{vir} \otimes \sqrt{\det T^*} \otimes \sqrt{\det K} \otimes  \pi_{B}^*( \mathrm{sp} (\Fd)) \\
= & \cS_{B}^\mathrm{vir} \otimes \sqrt{\det A^*} \otimes \pi_{B}^*( \mathrm{sp} (\Fd)) .
\end{align*}

Next, suppose that two normal forms $A^\bullet_0=\{A\to B_0\to A^*\}$ and $A^\bullet_1=\{A\to B_1\to A^*\}$ are related by a family $A^\bullet_t=\{A\to B_t\to A^*\}$\footnote{When $L$ is a projective scheme, we can prove that two normal forms connected by a one-parameter family are isomorphic. Moreover, we can choose the isomorphism so that the Behrend-Fantechi cones are preserved. This provides an alternative proof of the invariance of $\cS^{\vir}_E\otimes\sqrt{\det T^*}\otimes \pi^*$ between these two normal forms when $L$ is a projective scheme.}. As in the paragraph containing \cite[Equation (77)]{OT1}, using the local cutout models of $L$ in the proof of Lemma \ref{Lem:fq}, the Behrend-Fantechi cones $C_0\subset B_0$ and $C_1\subset B_1$ are related by a family of cones on which the two functions $f/\lambda^2$ and $q_t$ are matched. Here, the function $q_t$ is induced by the quadratic form on $B_t$. Then the $\AA^1$-homotopy invariance of $K$-groups of coherent sheaves tells us that the Gysin pullback of
\beq{cSBt}
\cS_{B_t}^\mathrm{vir} \otimes \sqrt{\det A^*}\otimes  \pi_{B_t}^* (\mathrm{sp}(\Fd))  \in K_0\( L\times \mathbb A^1,\ZZ\)
\eeq
to the fibre $t\in \AA^1$ is independent of $t$. It is the same as the cohomology of Gysin pullback if we view \eqref{cSBt} as a factorisation; see \cite[Lemma 2.15]{OS} and Footnote \ref{OS215}. Since it is flat, the Gysin pullback is the ordinary pullback. Hence the pullbacks to $t=0$ and $t=1$ of \eqref{cSBt} are
$$
\cS_{B_0}^\mathrm{vir} \otimes \sqrt{\det A^*}\otimes  \pi_{B_0}^* (\mathrm{sp}(\Fd)) \ \ \text{and}\ \ \cS_{B_1}^\mathrm{vir} \otimes \sqrt{\det A^*}\otimes  \pi_{B_1}^* (\mathrm{sp}(\Fd)) 
$$
in $K_0( L,\ZZ)$, respectively. This proves the invariance of $\cS^{\vir}_E\otimes\sqrt{\det T^*}\otimes \pi^*$ under a one-parameter family of normal forms.
\end{proof}

\subsection{$(-2)$-shifted symplectic}
Let $L$ be a $(-2)$-symplectic DM stack. The morphism $L\to \Spec\C$ is then $(-1)$-Lagrangian of the point with critical model $(\Spec\C, 0)$. Hence we obtain the Lagrangian class
$$
L^{\vir}:=L^{\vir}_{(\Spec\C, 0)}: K\(\MF(\Spec\C,0)\)\cong \Z\ \To\ K_0\(L,\ZZ\).
$$

\begin{prop}\label{1toO} The Lagrangian class $L^{\vir}$ for a $(-2)$-symplectic DM stack $L$ takes $1$ to the virtual structure sheaf $\widehat{\cO}^{\vir}_L$ \cite[Definition 5.9]{OT1}.
\end{prop}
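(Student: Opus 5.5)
The plan is to unwind the definition of $L^{\vir}_{(\Spec\C,0)}$ in this degenerate case and match it term by term with \cite[Definition 5.9]{OT1}. With $U=\Spec\C$ and $f=0$, the generator $1\in\Z$ is represented by the matrix factorisation $\cO_{\Spec\C}$, with $F_+=\cO$ and $F_-=0$. We have $I_{L/U}^2\ni f=0$ trivially, and the deformation $M^o$ is the usual deformation to the intrinsic normal cone $C_{L}=C_{L/\Spec\C}$ with $q=\lambda^{-2}\cdot 0=0$. By Example~\ref{ex1} and Example~\ref{ex2}, $\mathrm{sp}(\cO_{\Spec\C})=\cO_{C_L}\in\mf(C_L,0)$. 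No resolution is needed here, since $C_{\Fd}=C_L$.

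Next, fix a normal form $T_L=\{T\to E\to T^*\}$ from \eqref{n2} with $U$ a point. This is exactly the self-dual three-term representative of \cite[Proposition 4.1]{OT1}, and the Behrend--Fantechi cone is $C=C_L\times_{[E/T]}E\subset E$. Then $\pi^*\cO_{C_L}=\cO_C$, regarded as a factorisation of $q|_C$. This function $q|_C$ is identically zero, because $C$ is isotropic by \cite[Proposition 4.3]{OT1}, or equivalently by the matching $\pi^*(\lambda^{-2}f)=q$ noted before \eqref{Lvir}. Consequently
$$
L^{\vir}(1)=\cS_E^{\vir}\bigl(\sqrt{\det T^*}\otimes\cO_C\bigr),
$$
which is the $K$-class of the cohomology of the 2-periodic complex $(\cS_{p^*E|_C},\sqrt{-1}\tau_E)\otimes\sqrt{\det T^*}$ on $C$, supported on the zero section $L$.

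Finally, compare this with \cite[Definition 5.9]{OT1}. There, $\widehat\cO^{\vir}_L=\sqrt{\mathfrak e}(E,\tau_E)|_C\otimes\sqrt{\det T^*}$, where the localised square root Euler class is defined via the Koszul-type 2-periodic complex $(\Lambda^\bullet\Lambda^*,\tau)$, twisted by $\sqrt{\det\Lambda}$ and signed by the orientation, for a maximal isotropic $\Lambda$ on a suitable cover. The key input is the identification $[(\cS_E,\sqrt{-1}s)]=\sqrt{\mathfrak e}(E,s)$ for isotropic $s$, asserted before Proposition~\ref{KSfan}. To justify it, I would reduce to the split case $E=\Lambda\oplus\Lambda^*$ using the degeneration from Proposition~\ref{lem: IandS}, carried out so that the cone stays isotropic along the family. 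In the split case, $\cS_\Lambda\cong\Lambda^\bullet\Lambda^*$, and Clifford multiplication by $\tau$ becomes $\tau_{\Lambda^*}\wedge+\iota_{\tau_\Lambda}$, which is precisely the Oh--Thomas operator. The factor $\sqrt{-1}$ can be removed by an automorphism rescaling the odd part, and the sign $|\Lambda|$ together with the twist $\sqrt{\det\Lambda}$ match by Proposition~\ref{prop: sqcorrection} and Corollary~\ref{corsE}.

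The main obstacle is this last comparison, specifically checking that the sign and orientation conventions agree, including the conventions for $h^+\oplus h^-[1]$ and the choice of positive $\Lambda$. Along the way one also has to check that the equivariant-cover and descent arguments of \cite{OT1}, which pass to a cover where $\Lambda$ exists, are compatible with our definition of \eqref{S-E} via $\cS_\Lambda$ twisted by $\sqrt{\det\Lambda}$. The independence of the normal form in Proposition~\ref{thm:indpullback} lets us choose the normal form used in \cite{OT1}.
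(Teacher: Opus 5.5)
Your proposal is correct and follows the paper's proof. In both, Example~\ref{ex2} gives $\pi^*\mathrm{sp}(\cO_{\Spec\C})=\cO_C$, hence $L^{\vir}(1)=\sqrt{\det T^*}\otimes\cS^{\vir}_E$; the identification $\cS^{\vir}_E=\sqrt{\mathfrak e}(E|_C,\tau_E)=\sqrt{0_E^*}\,[\cO_C]$ stated before Proposition~\ref{KSfan} then yields $\widehat{\cO}^{\vir}_L$ by \cite[Definition 5.9]{OT1}. The one difference is that you sketch a justification of that identification (reducing to $E\cong\Lambda\oplus\Lambda^*$ and checking signs), whereas the paper simply invokes it.
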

\begin{proof} By Example \eqref{ex2}, we have $\pi^*(\mathrm{sp}(\cO_U))=\cO_C$. Hence $L^{\vir}(\cO_U)=\surd{\det T^*}\otimes \cS^{\vir}_E$ once we fix the normal form $\{T\to E\to T^*\}$ of $T_L$. In the paragraph above Proposition \ref{KSfan}, we have discussed $\cS^{\vir}_E=\surd{\mathfrak{e}}(E|_C,\tau_E)$, which is $\surd{0}^{\;*}_E[\cO_C]$ \cite[Definition 5.8]{OT1}. Hence by \cite[Definition 5.9]{OT1}, we have $L^{\vir}(\cO_U)=\surd{\det T^*}\cdot \surd{0}^{\;*}_E[\cO_C]=\widehat{\cO}^{\vir}_L$.
\end{proof}

Finally we obtain our main theorem.

\begin{thm}\label{main2}
Let $L$ be an oriented $(-1)$-Lagrangian of $\mathrm{crit}(f)$. Suppose 
\begin{itemize}
\item $L$ is a DM stack and $L\to M$ is a DM morphism,
\item $L$ has the resolution property,
\item the relative virtual dimension of $L\to U$ is even. 
\end{itemize}
Then the Lagrangian class $L^{\vir}_{(U,f)}$\vspace{-0.7mm} \eqref{Lvir} is well-defined. If $L$ is $(-2)$-symplectic DM stack, then $L^{\vir}$ takes $\cO_{\Spec\C}$ to $\widehat{\cO}^{\vir}_L$. 
\end{thm}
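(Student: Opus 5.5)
The proof assembles the ingredients already established; there are three things to check.

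First, the data entering the definition \eqref{Lvir} exist under the stated hypotheses. Since $L$ has the resolution property, $L\to M$ is a DM morphism, the relative virtual dimension of $L\to U$ is even and $T_{L/U}$ is oriented, the normal form proposition of Section \ref{sect:normalform} supplies a symmetric representative $\{T\rt{a}E\rt{a^*}T^*\}$ with $(E,q)$ an even rank $SO$-bundle. Via \eqref{nwithL}, the Behrend-Fantechi cone $C$ embeds in $E$. A DM stack with the resolution property is in particular quasi-compact with affine diagonal, which is exactly the hypothesis under which the \v Cech pushforward used in \eqref{defsp} is defined.

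Second, the constituent functors are defined and descend to $K$-groups. Lemma \ref{Lem:fq} gives $f\in I^2_{L/U}$, so Theorem \ref{thm: spfunctor} provides $\mathrm{sp}:K(\MF(U,f))\to K(\mf(C_{L/U},\lambda^{-2}f))$. By the discussion after Lemma \ref{Lem:fq}, $\pi^*(\lambda^{-2}f)=q|_C$, so the flat pullback $\pi^*$ lands in $K(\mf(C,q))$. Twisting by $\surd{\det T^*}\in K^0(L,\ZZ)$ (which exists by \cite[Lemma 5.1]{OT1}) and applying $\cS^{\vir}_E$ from \eqref{Svir} then gives a homomorphism to $K_0(L,\ZZ)$. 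Each step preserves absolute acyclicity, and tensoring with the Clifford factorisation is well defined even without a spin structure, by the argument using a maximal isotropic $\Lambda$ and the line bundle comparing $\det\Lambda_1\otimes\det\Lambda_2^*$. Hence the composite is a homomorphism.

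Third, the composite does not depend on choices. The only choices are the normal form and the local embeddings $L\subset A$ used to build $C_{L/U}$. For the embeddings, I would note that $\mathrm{sp}$ is intrinsic to $L\to U$ and compare two embeddings through a common smooth cover, using Proposition \ref{Kcomm}. Independence of the normal form is Proposition \ref{thm:indpullback}. The final claim for a $(-2)$-symplectic $L$ is Proposition \ref{1toO}, using Example \ref{ex2}, which gives $\mathrm{sp}(\cO)=\cO_C$.

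The main obstacle is the independence step. Everything else is bookkeeping, while Proposition \ref{thm:indpullback} relies on the reduction of \cite[Section 4.2]{OT1}, which connects any two normal forms by isotropic reductions and one-parameter families. I would check that this reduction holds for DM stacks with the resolution property, as opposed to only the schemes used in \cite{JS}, by invoking the locked-fibration argument of \cite{Pa} mentioned in the footnote to Lemma \ref{Lem:fq}.
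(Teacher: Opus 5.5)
Your proposal is correct and is essentially the paper's own argument. The paper states Theorem~\ref{main2} without a separate proof; it is the assembly of the construction in Section~\ref{Sec:pullback} (Lemma~\ref{Lem:fq}, Theorem~\ref{thm: spfunctor}, the pullback $\pi^*$ and $\cS^{\vir}_E$), Proposition~\ref{thm:indpullback} for independence of the normal form, and Proposition~\ref{1toO} with Example~\ref{ex2} for the $(-2)$-symplectic case.

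The slips are peripheral. Being DM with the resolution property does not make $L$ quasi-compact; quasi-compactness with affine diagonal is a standing assumption of Section~\ref{sect:normalform}, not a consequence. There is no choice of local embedding to eliminate, since $\mathrm{sp}$ is built on the deformation to the intrinsic normal cone $C_{L/U}$. Finally, the Joyce--Safronov (or Park) local input is not what connects normal forms, which uses only the Oh--Thomas argument and the resolution property. It is used to match $\pi^*(\lambda^{-2}f)$ with $q$ on the Behrend--Fantechi cones, including along the one-parameter family of cones.
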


\subsection{With global cutout data of $L$}
We assume that $L$ is isomorphic to the zero locus of the section $s$ in the diagram \eqref{-1local}. In this case, we have the canonical normal form of $T_{L/U}$,
$$
T_{L/U}\cong \left\{ T_V|_L \rt{ds} E|_L \rt{ds^*} T^*_V|_L \right\}.
$$
The Behrend-Fantechi cone $C\subset E|_L$ is the normal cone $C\cong C_{L/V}\subset E|_L$, where the embedding is obtained by the surjection $E^*\onto I_{L/V} (\subset \cO_V)$.

Applying Proposition \ref{Kcomm} to $p:V\to U$, we have the following commutativity of $K$-groups
$$
\xymatrix{
K\(\MF (U,f)\)\ar[r]^-{\mathrm{sp}}\ar[d]_-{p^*} & K\(\mf (C_{L/U},q)\) \ar[d]^-{p^*}\\
K\(\MF (V,p^*f)\)\ar[r]^-{\mathrm{sp}} & K\(\mf(C_{L/V},p^*q)\).
}
$$
Hence, with the global cutout model \eqref{-1local}, we obtain 
$$
L^{\vir}_{(U,f)}(\Fd)=\cS^{\vir}_{E|_L}\otimes \mathrm{sp}\( \sqrt{\det T^*_V|_L}\otimes p^*\Fd\)\ \in\ K_0\(L,\ZZ\).
$$
This tells us that the definition suggested in Introduction \eqref{p3} is matched with $L^{\vir}_{(U,f)}$ \eqref{Lvir}.

\section{Equivalent critical models}\label{Sect:twoc}
Given a critical model $(U,f)$ with $SO(2m,\C)$ bundle $(Q,q_Q)$, we associate to it the critical model $(\mathrm{tot}\;Q,f+q_Q)$ on the total space, which is equivalent to $(U,f)$ in the sense that their critical loci are isomorphic as $(-1)$-shifted symplectic spaces. Here, $f$ on $\mathrm{tot}\;Q$ denotes the pullback function via the projection map $\varphi:\mathrm{tot}\;Q\to U$ and $q_Q$ denotes the quadratic function induced by the quadratic form, by abusing notations. Let us define the factorisation
\beq{Kvir}
\cK_Q^{\vir}:=(\cS_{\varphi^*Q},\tau_Q)\in \MF(\mathrm{tot}\;Q, q_Q),
\eeq
using the Clifford module $\cS_{\varphi^*Q}$ of $Cl(\varphi^*Q,q_Q)$ and the tautological section $\tau_Q$ of $\varphi^*Q$ (cf. $\cS^{\vir}_Q\in \MF(\mathrm{tot}\;Q, -q_Q)$ in \eqref{Svir} was defined by using the same $S_{\varphi^*Q}$, but different section $\sqrt{-1}\tau_Q$). It is known as Kn\"orrer periodicity that $\cK_Q^{\vir}$ induces an equivalence of derived categories of matrix factorisations ~\cite{Te}
\begin{equation}
\label{eq: GenKnoerrer}
\varphi^*(-)\otimes \cK_Q^{\vir}[m] : \MF(U, f) \To \MF(\mathrm{tot}\;Q, f+q_Q).
\end{equation}
The goal of this section is to prove the following theorem.

\begin{thm}\label{thm:critind} 
Let $L\to \mathrm{crit}(f)$ be a $(-1)$-Lagrangian. For a $SO(2m,\C)$-bundle $(Q,q_Q)$ on $U$, the two Lagrangian classes obtained by critical models $(U,f)$ and $(\mathrm{tot}\;Q,f+q_Q)$ commute with the Kn\"orrer periodicity,
\beq{KLL}
\xymatrix{
K\(\MF(U, f)\) \ar[r]^-{L_{(U,f)}^{\vir}} \ar[d]_-{\eqref{eq: GenKnoerrer}}& K_0\(L,\ZZ\)\\
K\(\MF(\mathrm{tot}\;Q, f+q_Q)\). \ar[ur]_-{\ \ L_{(\mathrm{tot}\;Q,f+q_Q)}^{\vir}}&
}
\eeq
In particular, for a $(-2)$-symplectic $L$, the Lagrangian class
$$
L^{\vir}_{(\C^{2m},xy)}: K\(\MF(\C^{2m}, x_1y_1+\cdots +x_ry_r)\) \To K_0\(L\)
$$
takes $\cK_{\C^{2m}}^{\vir}[m]$ \eqref{Kvir} to the virtual structure sheaf $\widehat{\cO}_L\in K_0\(L,\ZZ\)$.
\end{thm}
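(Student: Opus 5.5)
Set $\wt U:=\mathrm{tot}\;Q$, $\wt f:=f+q_Q$, and let $\iota:L\to U\into \wt U$ be the composition with the zero section. The plan is to compute both Lagrangian classes with explicit normal forms and reduce the comparison to Proposition \ref{prop: factisored}. First, from a normal form $\{T\to E\to T^*\}$ of $T_{L/U}$ we build one for $T_{L/\wt U}$. The triangle $T_{L/U}\to T_{L/\wt U}\to \iota^*N_{U/\wt U}=Q|_L$ gives $T_{L/\wt U}\cong\{T\to E\oplus Q|_L\to T^*\}$ with quadratic form $q\oplus q_Q$. This is again an even rank $SO$-bundle, and the relative dimension stays even. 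Then $E$ is the isotropic reduction of $E\oplus Q$ by $K=0$ in the $T$-direction. It is more convenient to view this as the direct sum with $Q$. On cones, the Behrend--Fantechi cone of $L\to \wt U$ should be $C\times_L Q|_L\subset E\oplus Q$, i.e.\ $C\oplus Q$, with function $q+q_Q$. This is what the deformation to the normal cone of $L\subset U\subset \wt U$ gives, because $U\subset\wt U$ is a zero section of a vector bundle.

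The key step is a compatibility of specialisation with Kn\"orrer periodicity. For $\Fd\in\MF(U,f)$, I would show in $K(\mf(C\oplus Q,q+q_Q))$ that
\[
\mathrm{sp}_{\wt U}\(\varphi^*\Fd\otimes\cK^{\vir}_Q\)=\mathrm{pr}_1^*\,\mathrm{sp}_U(\Fd)\otimes \cK^{\vir}_{Q|_L}.
\]
To do this, first reduce via Proposition \ref{Kcomm} to the case where $Q$ is trivial, or at least split as $\Lambda\oplus\Lambda^*$ locally, so that $\cK^{\vir}_Q$ is a Koszul-type factorisation whose structure maps are linear in the fibre coordinates. Next observe that under $\lambda$-rescaling, $\tau_Q/\lambda$ is exactly the tautological section on the normal cone $Q$. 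For a tensor product where one factor has structure maps homogeneous of degree one in the fibre coordinates, the graph closure $M^o_{\varphi^*\Fd\otimes\cK}$ should be dominated by $M^o_{\Fd}\times Q$, with tautological factorisation $\xi_\bullet\boxtimes(\cS,\tau/\lambda)$. Comparing pushforwards then gives the identity in $K$-theory. I expect this step to be the main obstacle. The blowup for a tensor product is not a product of blowups, so I would first try to build a proper map $M^o_{\Fd}\times_{M^o_{L/U}} M^o_{L/\wt U}\to G_{\varphi^*\Fd\otimes\cK}$ using the tensor product of graphs. I would then check that it lands in the closure and pulls back the tautological factorisation, and conclude with the projection formula.

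Granting this, the two classes compare as follows. By definition, $L^{\vir}_{(\wt U,\wt f)}$ of the Kn\"orrer image is
\[
\cS^{\vir}_{E\oplus Q}\otimes\sqrt{\det T^*}\otimes\(\mathrm{sp}_U(\Fd)\boxtimes\cK^{\vir}_Q[m]\).
\]
Since $\cS^{\vir}_{E\oplus Q}\cong\cS^{\vir}_E\boxtimes\cS^{\vir}_Q$, the $Q$-part is $\cS^{\vir}_Q\otimes\cK^{\vir}_Q[m]$ on $\mathrm{tot}\;Q|_L$. Its structure maps are $\sqrt{-1}\tau$ and $\tau$, so by Proposition \ref{KSfan} this is $(-1)^m[m]\,\mathfrak{e}(Q,\tau_Q)$ shifted by $m$, i.e.\ exactly the Koszul resolution of the zero section, up to the signs $(-1)^m$ from the shift and from Proposition \ref{KSfan} cancelling. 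Pushing forward along $C\oplus Q\to C$ after tensoring with $\mathrm{pr}_1^*$, and using $\mathfrak{e}(Q,\tau_Q)=[\cO_{0}]$, recovers $\cS^{\vir}_E\otimes\sqrt{\det T^*}\otimes\mathrm{sp}_U(\Fd)=L^{\vir}_{(U,f)}(\Fd)$. Proposition \ref{thm:indpullback} lets us use this particular normal form. The special case follows by applying \eqref{KLL} with $U=\Spec\C$ and $Q=\C^{2m}$: $\cO_{\Spec\C}$ maps to $\cK^{\vir}_{\C^{2m}}[m]$, and Proposition \ref{1toO} gives $\widehat{\cO}^{\vir}_L$.
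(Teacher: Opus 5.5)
Your proposal follows the paper's proof essentially step for step. It uses the split normal form $\{T\to E\oplus Q\to T^*\}$ with cone $C\oplus Q$ (Lemma \ref{1stlemma}). It proves compatibility of $\mathrm{sp}$ with Kn\"orrer periodicity by realising the graph closure for $\varphi^*\Fd\otimes\cK^{\vir}_Q$ over the base change $M^o_{\Fd}\times_{M^o_{L/U}}M^o_{L/\mathrm{tot}\,Q}$, with matching tautological factorisations (Lemma \ref{2ndlemma}). It then uses Proposition \ref{KSfan} and the projection formula to turn $\cS^{\vir}_Q\otimes\cK^{\vir}_Q[m]$ into $i_*\cO_C$, and gets the special case from Proposition \ref{1toO}.

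Two small remarks. First, your preliminary reduction via Proposition \ref{Kcomm} to trivial $Q$ is not valid as stated, since a $K$-theoretic identity cannot be checked on an open cover trivialising $Q$, and Proposition \ref{Kcomm} requires $L$ to embed in the cover. It is also unnecessary, because $\tau_Q/\lambda$ is already regular on $M^o_{L/\mathrm{tot}\,Q}$ for any $Q$. Second, the map you still have to build is not the naive tensor product of graphs $(A\otimes\cK_+\oplus B\otimes\cK_-,\;A\otimes\cK_-\oplus B\otimes\cK_+)$. It is the image of that pair under the two automorphisms $\Psi_1,\Psi_2$ involving $\tau/\lambda$, which the paper writes down explicitly.
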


\subsection{Commutativity of Lagrangian classes for equivalent critical models}
We first prove that the diagram \eqref{KLL} is commutative. We prove the commutativity of \eqref{KLL} using two key lemmas and postpone their proofs to the latter part of this section.

The first lemma is about normal forms of $T_{L/U}$ and $T_{L/\mathrm{tot}\;Q}$.

\begin{lem}\label{1stlemma} On $L$, we can find a normal form $\{T\xrightarrow{a} E\xrightarrow{a^*} T^*\}$ \eqref{n2} of $T_{L/U}$ whose sum with $Q[-1]$,
\beq{n3}
\{T\rt{(a,0)} E\oplus Q\rt{a^*+0} T^*\},
\eeq
becomes a normal form of $T_{L/\mathrm{tot}\;Q}$. Here, by abusing notation, $Q$ is the pullback to $L$ via $L\to U$. Moreover, the Behrend-Fantechi cone is also decomposed into $C\oplus Q\subset E\oplus Q$ accordingly.
\end{lem}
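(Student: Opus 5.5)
The plan is to compare the two relative tangent complexes through the factorisation $L\to \mathrm{tot}\;Q\xrightarrow{\varphi} U$. The map $L\to\mathrm{tot}\;Q$ is the composite of $L\to U$ with the zero section. Indeed, $\mathrm{crit}(f+q_Q)$ lies in the zero section, because $d(f+q_Q)$ vanishing in the fibre direction forces $\tau_Q=0$. Since $\varphi$ is smooth with $T_{\mathrm{tot}\;Q/U}=\varphi^*Q$, we get an exact triangle on $L$:
$$
T_{L/\mathrm{tot}\;Q}\To T_{L/U}\To Q|_L .
$$
The first task is to show that this triangle splits compatibly with the shifted symmetric structures. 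This means $T_{L/\mathrm{tot}\;Q}\cong T_{L/U}\oplus Q[-1]$, where $Q[-1]$ carries the symmetric form $q_Q$ in the middle degree and the isomorphism $T_{L/\mathrm{tot}\;Q}\cong T_{L/\mathrm{tot}\;Q}^\vee[-2]$ restricts to the one already constructed for $T_{L/U}$ plus $q_Q$. The connecting map $Q|_L\to T_{L/\mathrm{tot}\;Q}[1]$ should be the zero map. To check this, I would go through the diagram of Section \ref{sect:normalform} that defines the symmetric structure. For the model $(\mathrm{tot}\;Q,f+q_Q)$, the Hessian restricted to $M$ is $d^2f\oplus q_Q$, so $T_M$ is represented by $\{T_U|_M\oplus Q\to\Omega_U|_M\oplus Q^*\}$. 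In this complex the $Q$-summand is the acyclic piece $\{Q\xrightarrow{q_Q}Q^*\}$ added to the old complex. The map $T_L\to T_M\to T_{\mathrm{tot}\;Q}|_M$ then lands in $T_U|_M\oplus 0$, and the cocone of $T_L\to T_U|_M\oplus Q$ is visibly $T_{L/U}\oplus Q[-1]$, with the duality isomorphism splitting as a direct sum.

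Next comes the normal form. Take any normal form $\{T\xrightarrow{a}E\xrightarrow{a^*}T^*\}$ of $T_{L/U}$ as in \eqref{n2}, built from the resolution property as in the existing Proposition. Direct sum with $\{0\to Q\to 0\}$ in the middle degree gives \eqref{n3}. Since $(E\oplus Q,q\oplus q_Q)$ is an $SO$-bundle of even rank, this complex represents $T_{L/\mathrm{tot}\;Q}$ together with its symmetric structure by the previous paragraph. So it is a normal form, provided it also carries a chain map to $\LL_{L/\mathrm{tot}\;Q}$ as in \eqref{nwithL}. For that map I would use local embeddings. Given $L\subset A$ smooth over $U$, with the map \eqref{nwithL}, take $A\times_U\mathrm{tot}\;Q$ with $L$ embedded via the zero section. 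Its ideal is $I_{L/A}+(\tau_Q)$, so $I/I^2\cong I_{L/A}/I_{L/A}^2\oplus Q^*|_L$, and $\Omega_{A\times_U\mathrm{tot}\;Q/\mathrm{tot}\;Q}|_L=\Omega_{A/U}|_L$. The chain map is then the old one plus the identity on $Q^*$; after dualising it is the identity on $Q$, and the differential $Q^*\to\Omega$ is zero as it should be. These local maps glue because the old maps glue and the $Q$-part is canonical.

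For the cone statement, the Behrend--Fantechi cone is the normal cone of $L$ in $A\times_U\mathrm{tot}\;Q$, taken inside $(I/I^2)^\vee$. The embedding $L\subset A\times_U\mathrm{tot}\;Q$ is the product of $L\subset A$ with the zero section of a vector bundle. Since the normal cone of a zero section of a bundle is the bundle itself, and normal cones are compatible with this kind of product, we get $C_{L/A\times_U\mathrm{tot}\;Q}=C_{L/A}\times_L Q$. Hence $C\oplus Q\subset E\oplus Q$. I expect the main obstacle to be the first step, namely proving that the shifted symmetric structure on $T_{L/\mathrm{tot}\;Q}$, which comes from the Lagrangian structure, really equals the direct sum. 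This requires transporting the Lagrangian structure on $L\to\mathrm{crit}(f)=\mathrm{crit}(f+q_Q)$ across the identification of the two $(-1)$-symplectic structures and checking that no cross term appears. My first attempt would be to work with the explicit local cutout models of \cite[Example 3.6]{JS}. There the model $(V,E,s)$ for $(U,f)$ becomes $(V\times_U\mathrm{tot}\;Q,\,E\oplus Q,\,s\oplus\tau_Q)$ for the new model, since $q(s,s)+q_Q(\tau_Q,\tau_Q)=f+q_Q$, and in that model the splitting is manifest.
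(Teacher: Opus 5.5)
Your proposal is correct and is essentially the paper's argument. The splitting $T_{\mathrm{tot}\,Q}|_U\cong T_U\oplus Q$ kills the relevant connecting map, giving $T_{L/\mathrm{tot}\,Q}\cong T_{L/U}\oplus Q[-1]$, so $Q$ can be added to the middle term of any normal form. Extending the map to the truncated cotangent complex by the identity on the $Q$-part then splits the Behrend--Fantechi cone as $C\oplus Q$.

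There are two small differences. The paper uses the triangle of $L\to U\xrightarrow{0}\mathrm{tot}\,Q$, showing $Q[-2]\to T_{L/U}$ factors through the zero map $T_{U/\mathrm{tot}\,Q}|_L[-1]\to T_U|_L[-1]$, where you use the triangle of $L\to \mathrm{tot}\,Q\xrightarrow{\varphi}U$. The paper also leaves the compatibility of the shifted symmetric forms implicit, whereas your block-diagonal Hessian $d^2f\oplus q_Q$ makes it explicit.
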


The quotient by $T$ of the Behrend-Fantechi cone $C\oplus Q$ in Lemma \ref{1stlemma} gives a natural identification of the intrinsic normal cones $C_{L/\mathrm{tot}\;Q}\cong C_{L/U}\oplus Q$. Using this, we state the second lemma which is the commutativity of the specialisations. 
\begin{lem}\label{2ndlemma}
Along the identification $C_{L/\mathrm{tot}\;Q}\cong C_{L/U}\oplus Q$ the function $f/\lambda^2\boxplus q_Q$ on the right-hand side pulls back to $(f+q_Q)/\lambda^2$. It defines the right vertical morphism in the diagram below making it commutative,
\beq{UQUQ}
\xymatrix{
K\(\MF (U,f)\) \ar[r]^-{\mathrm{sp}}\ar[d]_-{\eqref{eq: GenKnoerrer}} & K\(\mf(C_{L/U}, f/\lambda^2)\) \ar[d]^-{\eqref{eq: GenKnoerrer}} \\
K\(\MF(\mathrm{tot}\;Q, f+q_Q)\) \ar[r]^-{\mathrm{sp}} & K\(\mf(C_{L/\mathrm{tot}\;Q}, (f+q_Q)/\lambda^2)\).
}
\eeq
\end{lem}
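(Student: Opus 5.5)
The plan is to compare the two specialisations through the geometry of the deformation to the normal cone of $L$ in $\mathrm{tot}\,Q$, and then to reduce the comparison to the generic fibre $\lambda\neq0$ by a $K$-theoretic specialisation argument.

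\emph{Step 1: the deformation spaces and the functions.} Since $L\to\mathrm{tot}\,Q$ factors as $L\to U$ followed by the zero section, the deformation to the normal cone should split as $M^o_{L/\mathrm{tot}\,Q}\cong M^o_{L/U}\times_U \mathrm{tot}\,Q$. Here a point $(x,v,\lambda)$ with $\lambda\neq0$ corresponds to $(x,w:=v/\lambda,\lambda)$. I would check this first on a smooth cover $L\subset A$ (with $A\times_U\mathrm{tot}\,Q$ as the cover of $\mathrm{tot}\,Q$), compatibly with Proposition \ref{Kcomm}; it then descends. On the central fibre it recovers $C_{L/\mathrm{tot}\,Q}\cong C_{L/U}\oplus Q$. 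Because $q_Q$ is quadratic in $v$, we get $q_Q(v)/\lambda^2=q_Q(w)$, so $(f+q_Q)/\lambda^2=f/\lambda^2\boxplus q_Q$ holds on all of $M^o_{L/\mathrm{tot}\,Q}$. This proves the first claim. Consequently the right vertical arrow of \eqref{UQUQ}, namely $\mathrm{pr}_1^*(-)\otimes(\cS_Q,\tau_w)[m]$, is well defined.

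\emph{Step 2: the two models of the tautological factorisation.} Let $G_\bullet:=\varphi^*\Fd\otimes\cK_Q^{\vir}[m]$. Its structure map is $d_F\otimes1+1\otimes\tau_Q$. Over $M^o$ we have $\tau_Q/\lambda=\tau_w$, which is regular, so only $d_F$ needs the graph construction. On $N:=M^o_{\Fd}\times_U\mathrm{tot}\,Q$ the factorisation $\eta_\bullet:=\xi_\bullet\boxtimes(\cS_Q,\tau_w)[m]$ is a subfactorisation of the frame factorisation \eqref{framefact} for $G_\bullet$, of the correct ranks. Hence it defines a morphism $N\to Gr\times Gr$ which agrees with the graph section \eqref{lce} of $G_\bullet$ on $\lambda\neq0$. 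As $N$ is integral and proper over $M^o$, this morphism factors through $\sigma:N\to M^o_{G_\bullet}$, with $\sigma^*\xi^G_\bullet=\eta_\bullet$. On the central fibre, pushing $\eta_\bullet$ down to $C_{L/U}\oplus Q$ gives $\mathrm{sp}(\Fd)\boxtimes(\cS_Q,\tau_w)[m]$, which is the image under the right vertical arrow. Pushing $\xi^G_\bullet|_{C_{G}}$ down gives $\mathrm{sp}(G_\bullet)$.

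\emph{Step 3: specialisation depends only on the generic fibre (main obstacle).} The map $\sigma$ is proper birational but need not satisfy $\sigma_*\cO=\cO$, so the two central-fibre pushforwards cannot be compared directly. Instead I would show that, in $K$-theory, the class $[\rho_*\xi_\bullet|_{C_{\Fd}}]$ equals $i_0^![\rho_*\xi_\bullet]$, where $i_0^!$ is the Gysin pullback to $\lambda=0$ applied to the pushforward to $M^o$. This uses flatness of $M^o_{\Fd}$ over $\AA^1$, since it is integral and dominant, together with base change for $\rho$ as in Proposition \ref{Kcomm}, via \cite[Proposition 5.9]{BDFIK}.

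It then remains to use a localisation sequence $K(\mf(C,q))\to K(\mf(M^o,q))\to K(\mf(M^o|_{\lambda\neq0},q))\to0$ for coherent factorisations. This is Orlov/Positselski-type exactness, and I expect it to be the delicate point. Granting it, $i_0^!\circ i_{0*}=0$ because the normal bundle of the central fibre is trivial, so $i_0^!$ factors through restriction to $\lambda\neq0$. There both $\rho_*\eta_\bullet$ (pushed along $N\to M^o$) and $\rho_*\xi^G_\bullet$ restrict to the same factorisation $G_\bullet$, since $\sigma$ is an isomorphism away from $\lambda=0$. Hence their specialisations agree in $K(\mf(C_{L/\mathrm{tot}\,Q},(f+q_Q)/\lambda^2))$, which gives the commutativity of \eqref{UQUQ}.

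If the localisation sequence proves troublesome, I would try a fallback: show directly that $\sigma_*\cO_N\cong\cO$ on the central fibre, and then apply the projection formula. This would rely on $N\to M^o_{G_\bullet}$ being an isomorphism, which may hold because the Grassmannian closure of a tensor graph is determined by its first factor.
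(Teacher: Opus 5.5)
\textbf{Your Steps 1 and 2 match the paper; the gap is in Step 3.} The paper also identifies $M^o_{L/\tot Q}$ with the base change of $M^o_{L/U}$ via $w=v/\lambda$. It notes that the $\cK^{\vir}_Q$ factor needs no blowup because $\tau_Q/\lambda$ is regular. It then compares $\varphi^*\xi_\bullet\otimes\cK_\bullet$ with the tautological factorisation $\widetilde\xi_\bullet$ of $G_\bullet=\varphi^*\Fd\otimes\cK_\bullet$ through a map of Grassmannian bundles, just as you do.

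Step 3 carries the whole comparison, but it rests on something you explicitly grant: exactness of $K(\mf(C,q))\to K(\mf(M^o,q))\to K(\mf(M^o|_{\lambda\neq 0},q))\to 0$. This bundles two things. One is a localisation theorem for absolute derived categories of coherent factorisations. The other is a d\'evissage statement: classes of factorisations that are acyclic off $\lambda=0$ come from the central fibre. The d\'evissage part is not formal for factorisations, since $q\neq 0$ and there are no cohomology sheaves to filter. Nothing in the paper supplies it.

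\textbf{The missing idea is that $\sigma:N\to M^o_{G_\bullet}$ is an isomorphism.} This makes Step 3 unnecessary, and your ``fallback'' is in fact the paper's argument.

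Write the Step 2 map explicitly. The assignment $(A,B)\mapsto(A\otimes\cK_+\oplus B\otimes\cK_-,\ A\otimes\cK_-\oplus B\otimes\cK_+)$ is a closed embedding of the Grassmannian bundle $G$ of $\Fd$ (pulled back to $M^o_{L/\tot Q}$) into the Grassmannian bundle $\widetilde G$ of $G_\bullet$. You recover $A$ and $B$ by intersecting with $(F_+\oplus F_-)\otimes\cK_\pm$.

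Under the naive regrouping $(F_+\oplus F_-)\otimes\cK\cong G_+\oplus G_-$, this map sends the graph of $d/\lambda$ to the graph of $d/\lambda\otimes 1$. It does not give the graph of $\delta_\pm/\lambda=d/\lambda\otimes 1\pm 1\otimes\tau_Q/\lambda$. The structure maps of the two frame factorisations also do not match. So your assertion that $\eta_\bullet$ is a subfactorisation of the frame factorisation of $G_\bullet$ requires composing with unipotent automorphisms involving $\tau_Q/\lambda$; these are the paper's $\Psi_1,\Psi_2$, and they are regular on $M^o_{L/\tot Q}$.

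The composite is still a closed embedding, and over $\lambda\neq 0$ it maps the graph onto the graph. Scheme-theoretic closure commutes with closed embeddings and with the flat base change defining $N$. Hence the composite carries $N$ isomorphically onto $M^o_{G_\bullet}$, with $\eta_\bullet\cong\widetilde\xi_\bullet$. Commutativity then follows from the projection formula and smooth base change, as at the end of your Step 2.

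If you prefer to keep Step 3, you can repair it by working in ordinary $G$-theory instead of $K$-theory of factorisations. Since $\eta_\bullet=\sigma^*\widetilde\xi_\bullet$ with $\widetilde\xi_\bullet$ locally free, it suffices that $\sigma_*\cO_{C_N}$ and $\cO_{C_{G_\bullet}}$ have the same class. That follows from Quillen localisation for coherent sheaves together with $i_0^!i_{0*}=0$.
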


\subsection*{Commutativity of \eqref{KLL}} Denoting by $\pi:C\to C_{L/U}$ the projection from the Behrend-Fantechi cone to the intrinsic normal cone, we recall that the Lagrangian class $L^{\vir}_{(U,f)}$ was defined to be $\cS^{\vir}_E\otimes \surd{\det T^*}\otimes \pi^*\circ \mathrm{sp}$. Hence, to prove the commutativity of \eqref{KLL} assuming Lemmas \ref{1stlemma} and \ref{2ndlemma}, we only need to prove that Lemma \ref{1stlemma} implies the commutativity of the diagram,
\beq{KKKdia1}
\xymatrix@C=25mm{
K\(\mf(C_{L/U}, f/\lambda^2)\) \ar[d]^-{\eqref{eq: GenKnoerrer}} \ar[r]^-{\cS^{\vir}_E\otimes\surd{\det T^*}\otimes\pi^*} & K_0\(L,\ZZ\) \\
K\(\mf(C_{L/\mathrm{tot}\;Q}, (f+q_Q)/\lambda^2)\). \ar[ru]_-{\ \ \ \ \ \ \ \ \ \ \ \ \ \cS^{\vir}_{E\oplus Q}\otimes\surd{\det (T\oplus Q)^*}\otimes\pi^*}
}
\eeq
The commutativity of tensor product by a matrix factorisation and smooth pullback for factorisations comes from the one for sheaves because we don't have to derive those functors. By the same reason, two smooth pullbacks are also commutative. This proves $\eqref{eq: GenKnoerrer}\circ \pi^*=\pi^*\circ \eqref{eq: GenKnoerrer}$. So the commutativity of \eqref{KKKdia1} comes from that of the following,
\beq{KKKdia2}
\xymatrix@C=25mm{
K\(\mf(C, q)\) \ar[d]^-{\eqref{eq: GenKnoerrer}} \ar[r]^-{\cS^{\vir}_E\otimes\surd{\det T^*}} & K_0\(L,\ZZ\) \\
K\(\mf(C\oplus Q, q\boxplus q_Q)\). \ar[ru]_-{\ \ \ \ \ \ \ \ \ \ \ \ \ \cS^{\vir}_{E\oplus Q}\otimes\surd{\det (T\oplus Q)^*}}
}
\eeq
Recall that the Kn\"orrer periodicity \eqref{eq: GenKnoerrer} is $\varphi^*(-)\otimes \cK^{\vir}_Q[m]$. In the upward diagonal arrow, we have $\cS^{\vir}_{E\oplus Q}\cong \cS^{\vir}_E\boxplus \cS^{\vir}_Q$ and $\surd{\det Q}\cong \cO_{C\oplus Q}$. Using these, it is enough to prove that $\varphi^*(-)\otimes \cK^{\vir}_Q\otimes \cS^{\vir}_Q[m]: K(\mf(C,q))\to K(\mf(C\oplus Q,q))$ equals the pushforward by the inclusion $i:C\into C\oplus Q$ that induces the following diagram,
$$
\xymatrix@C=30mm{
K\(\mf(C, q)\) \ar[d]^-{\eqref{eq: GenKnoerrer}} \ar[r]^-{\cS^{\vir}_E\otimes\surd{\det T^*}} & K\(\mf(C,0)_L\) \ar[d]^-{i_*}  \\
K\(\mf(C\oplus Q, q\boxplus q_Q)\) \ar[r]_-{\cS^{\vir}_{E\oplus Q}\otimes\surd{\det (T\oplus Q)^*}} & K\(\mf(C\oplus Q,0)_L\).
}
$$ 
By Proposition \ref{KSfan}, $\cK^{\vir}_Q\otimes \cS^{\vir}_Q[m]$ is the Koszul complex of $Q$ with the tautological section which is isomorphic to the (totalisation of) structure sheaf $i_*\cO_C\in \mf(C\oplus Q,0)_C$. Hence the projection formula tells us
$$
\varphi^*(-)\otimes \cK^{\vir}_Q\otimes \cS^{\vir}_Q[m]\cong \varphi^*(-)\otimes i_*(\cO_C)\cong i_*(i^*\varphi^*(-))\cong i_*(-).
$$
Then taking cohomology proves the commutativity of \eqref{KKKdia2}.

\subsection*{Proof of Lemma \ref{1stlemma}} In the derived category $D^b(L)$, we have the following diagram of exact triangles
$$
\xymatrix{
T_{U/\tot Q}|_L[-1] \ar[r]\ar@{=}[d]& T_U|_L[-1] \ar[r]\ar[d]& T_{\tot Q}|_L[-1] \ar[d]&  \\
T_{U/\tot Q}|_L[-1] \ar[r]& T_{L/U}\ar[r]\ar[d] & T_{L/\mathrm{tot} Q} \ar[d] \\
& T_L \ar@{=}[r]& T_L.
}
$$
Since the zero section morphism $U\into \tot\;Q$ induces a splitting exact sequence $0\to T_U\to T_{\tot Q}|_U\to T_{U/\tot Q}[1](\cong Q)\to 0$ of vector bundles, the morphism $T_{U/\tot Q}|_L[-1]\to T_U|_L[-1]$ in the upper horizontal exact triangle is zero. Hence the morphism $T_{U/\tot Q}|_L[-1]\to T_{L/U}$ in the middle horizontal exact triangle is also zero. Since $T_{U/\tot Q}|_L[-1]\cong Q[-2]$, for any normal form $\{T\to E\to T^*\}$ \eqref{n2} of $T_{L/U}$, we obtain a normal form $\{T\to E\oplus Q\to T^*\}$ \eqref{n3} of $T_{L/\tot Q}$.

Suppose the normal form comes equipped with the representative \eqref{nwithL} of $T_{L/U}^\vee\to \LL_{L/U}$. Since $\LL_{L/\mathrm{tot}\;Q}$ is the mapping cone of the zero morphism of $T_{U/\tot Q}|_L[-1]$ to $\LL_{L/U}$, the representative \eqref{nwithL} gives rise to a representative 
$$
\xymatrix{
T_{L/\tot Q}^\vee \ar[d] & = & \{\ T\ar[r]^-a & E\oplus Q \ar[r]^-{a^*} \ar@{->>}[d]^-{b\oplus 1}& T^* \} \ar@{->>}[d] \\
\LL_{L/\tot Q} & = & & I/I^2\oplus Q \ar[r]^-d & \Omega_{A/U}|_L.
}
$$
of $T_{L/\tot Q}^\vee\to \LL_{L/\tot Q}$, where $b:E\to I/I^2$ is the morphism given in \eqref{nwithL}. This proves that the Behrend-Fantechi cone from the normal form \eqref{n3} of $T_{L/\mathrm{tot}\;Q}$ is decomposed into $C\oplus Q\subset E\oplus Q$, where $C\subset E$ is that from \eqref{n2} of $T_{L/U}$.

\subsection*{Proof of Lemma \ref{2ndlemma}}
We first prove the first part of Lemma \ref{2ndlemma}: the function $f/\lambda^2\boxplus q_Q$ pulls back to $(f+q_Q)/\lambda^2$ via the identification $C_{L/\mathrm{tot}\;Q}\cong C_{L/U}\oplus Q$. We can check it locally assuming $Q$ to be a trivial bundle $U\times \AA^{2m}$. Then we have canonical isomorphisms
$$
C_{L/\mathrm{tot}\;Q} \cong C_{L/U}\times N_{\{0\}/\AA^{2m}}\quad\text{and}\quad M^o_{L/\mathrm{tot}\;Q} \cong M^o_{L/U}\times_{\AA^1} M^o_{\{0\}/\AA^{2m}}.
$$
We set $x_i,y_i$, $i=1,...,m$, be the coordinates on $\AA^{2m}$ with $q_Q=\sum_i x_iy_i$, and $X_i=\partial_{x_i},Y_i=\partial_{y_i}$, $i=1,...,m$, be the basis for $N_{\{0\}/\AA^{2m}}$. Then the function $q_Q/\lambda^2$ on $M^o_{\{0\}/\AA^{2m}}$ restricts to $q_Q=\sum_iX_iY_i$ on $N_{\{0\}/\AA^{2m}}$ if we embed $M^o_{\{0\}/\AA^{2m}}$ into $(\AA^{2m}\times N_{\{0\}/\AA^{2m}})\times \AA^1_{\lambda}$ as the zero loci of $(\lambda X_i- x_i,\lambda Y_i- y_i)$. 

We next prove the commutativity of the diagram \eqref{UQUQ}. As a preparation, we observe that the birational morphism $M^o_{\cK^{\vir}_Q}\to M^o_{U/\tot Q}$\vspace{-1.5mm} \eqref{MFd}, associated with the factorisation $\cK^{\vir}_Q$\vspace{-0.2mm} \eqref{Kvir} is in fact the identity. Indeed, the structure maps of $\cK^{\vir}_Q$ are given by the tautological section of $Q$, and hence divisions by $\lambda$ are well-defined on $M^o_{U/\tot Q}$\vspace{-0.3mm}. Hence, no blowup occurs. In general, by the same reason, for any $\Fd\in \MF(U,f)$, the birational morphism $M^o_{\varphi^*\Fd\otimes \cK^{\vir}_Q}\to M^o_{L/\tot Q}$ is the base-change of $M^o_{\Fd}\to M^o_{L/U}$, 
hence 
$$
\xymatrix{
C_{\varphi^*\Fd\otimes \cK^{\vir}_Q}\cong C_{\Fd}\oplus Q \ar[r]^-{\rho}\ar[d] & C_{L/\!\tot Q}\cong C_{L/U}\oplus Q \ar[d] \\
C_{\Fd} \ar[r]^-{\rho}& C_{L/U}
}
$$
is a fibre diagram. Using this, we would like to divide the diagram \eqref{UQUQ} into two parts
\beq{UQUQ2}
\xymatrix{
K\(\MF (U,f)\) \ar[r]\ar[d]_-{\eqref{eq: GenKnoerrer}} & K\(\MF(C_{\Fd}, f/\lambda^2)\) \ar[d]^-{\eqref{eq: GenKnoerrer}} \\
K\(\MF(\mathrm{tot}\;Q, f+q_Q)\) \ar[r] & K\(\MF(C_{\varphi^*\Fd\otimes \cK^{\vir}_Q}, (f+q_Q)/\lambda^2)\) 
}
\eeq
and 
\beq{UQUQ3}
\xymatrix{
K\(\MF(C_{\Fd}, f/\lambda^2)\) \ar[d]^-{\eqref{eq: GenKnoerrer}} \ar[r]^-{\rho_*} & K\(\mf(C_{L/U}, f/\lambda^2)\) \ar[d]^-{\eqref{eq: GenKnoerrer}} \\
K\(\MF(C_{\varphi^*\Fd\otimes \cK^{\vir}_Q}, (f+q_Q)/\lambda^2)\) \ar[r]^-{\rho_*} & K\(\mf(C_{L/\mathrm{tot}\;Q}, (f+q_Q)/\lambda^2)\),
}
\eeq
and prove the commutativity of each. The commutativity of \eqref{UQUQ3} is rather easier. For $G_\bullet \in 
\MF(C_{\Fd},f/\lambda^2)$, we have
$$
\rho_*\(\varphi^*G_\bullet \otimes \rho^*\cK^{\vir}_Q[m]\)=\rho_*\(\varphi^*G_\bullet\)\otimes \cK^{\vir}_Q[m]=\varphi^*\(\rho_*G_\bullet\)\otimes \cK^{\vir}_Q[m]
$$ 
by the projection formula \cite[Proposition 2.2.10]{CFGKS} and the smooth base-change \cite[Proposition 5.9]{BDFIK}.

It remains to show the commutativity of \eqref{UQUQ2}. In the rest of the proof, we denote $\cK_\bullet = \cK_Q^{\vir}[m]$ to simplify cluttering notations. Letting $\xi_\bullet$ and $\widetilde{\xi}_\bullet$ be the tautological factorisations on $C_{\Fd}$ and $C_{\varphi^*\Fd \otimes \cK_\bullet}$, respectively, for the factorisations $\Fd \in \MF(U,f)$ and $\varphi^*\Fd \otimes \cK_\bullet\in \MF(\tot\;Q, f + q_Q)$, showing the commutativity is equivalent to prove that
\beq{phixiK}
\varphi^*\xi_\bullet\otimes \cK_\bullet \cong \widetilde{\xi}_\bullet.
\eeq
Our plan is to show this on the deformation $M_{\mathrm{big}}:=M^o_{\varphi^*\Fd\otimes \cK^{\vir}_Q}\cong \varphi^*M_{\Fd}$\vspace{-1.0mm}. 

The factorisations define two Grassmannian bundles 
\begin{align*}
&G := Gr(r_+, F_+\oplus F_-)\times Gr(r_-, F_+\oplus F_-) \quad\text{and}\\
\widetilde G := Gr\(r, &(\Fd \otimes \cK_\bullet)_+\oplus (\Fd \otimes \cK_\bullet)_- \)\times Gr\(r, (\Fd \otimes \cK_\bullet)_+\oplus (\Fd \otimes \cK_\bullet)_-  \) ,
\end{align*}
over $M^o_{L/U}$ and $M^o_{L/\mathrm{tot}\;Q}$\vspace{-0.8mm}, respectively. The ranks of $(\Fd \otimes \cK_\bullet)_\pm$ are the same which we denote by $r=2^{m-1} (r_++r_-)$. Then $M_{\mathrm{big}}$ lies in $\widetilde{G}$. Considering $G$ to be its pullback via $\rho: M^o_{L/\tot Q}\to M^o_{L/U}$, $M_{\mathrm{big}}$ also lies in $G$ since $M_{\mathrm{big}}$ is the base-change of $M_{\Fd}$. Since the tautological factorisation on the former is $\wt{\xi}_\bullet$ and that on the latter is $\varphi^*\xi_\bullet$, it is sufficient to construct an embedding $G\into \wt{G}$ taking $M_{\mathrm{big}}$ to itself and an isomorphism \eqref{phixiK} using the tautological factorisations.

We explain the embedding $G\into \wt{G}$ in two steps. First, we consider a pair of subbundles $(A,B)\subset (F_+\oplus F_-) \oplus(F_+\oplus F_-)$ of rank $(r_+,r_-)$ representing an element of $G$. We send it to the following pair,
\begin{equation*}
\label{ABABFSFS}
\(A\otimes \cK_+ \oplus B \otimes \cK_-, A\otimes \cK_-\oplus B\otimes \cK_+\) \subset \(\Fd \otimes \cK_\bullet\) \oplus \(\Fd \otimes \cK_\bullet\),
\end{equation*}
where $\Fd \otimes \cK_\bullet$ denotes the sum $(\Fd \otimes \cK_\bullet)_+\oplus (\Fd \otimes \cK_\bullet)_-$ ignoring the grading. Each factor is a rank $r$ subbundle because $A\otimes \cK_+$ intersects $B \otimes \cK_-$  trivially, and so does $A\otimes \cK_-$ to $B\otimes \cK_+$. Next, we send each of the resulting pair under two different (even-degree) automorphisms $\Psi_1, \Psi_2 : \Fd \otimes \cK_\bullet \to \Fd \otimes \cK_\bullet$, respectively, defined below: 
\begin{align*}
	\Psi_1: &\((F_+ \otimes \cK_+) \oplus (F_- \otimes \cK_+) \oplus (F_+ \otimes \cK_-) \oplus (F_- \otimes \cK_-) \) \\
	&\To \((F_+ \otimes \cK_+) \oplus (F_- \otimes \cK_-) \oplus (F_+ \otimes \cK_-) \oplus (F_- \otimes \cK_+) \),
\end{align*}
$$
\Psi_1=
\left(
\begin{array}{cccc}
\id_{F_+}\otimes \id_{\cK_+}  &0 &0 &0  \\
0 &0&0 &\id_{F_-}\otimes \id_{\cK_-}  \\
\id_{F_+} \otimes \frac{\tau \cdot}{\lambda} &0 &\id_{F_+}\otimes \id_{\cK_-} & 0\\
0 & \id_{F_+}\otimes \id_{\cK_-}  & 0 & -\id_{F_-} \otimes \frac{\tau \cdot}{\lambda}
\end{array}
\right),
$$
and
\begin{align*}
	\Psi_2: &\((F_+ \otimes \cK_-) \oplus (F_- \otimes \cK_-) \oplus (F_+ \otimes \cK_+) \oplus (F_- \otimes \cK_+) \) \\
	&\To \((F_+ \otimes \cK_+) \oplus (F_- \otimes \cK_-) \oplus (F_+ \otimes \cK_-) \oplus (F_- \otimes \cK_+) \)
\end{align*}
$$
\Psi_2=
\left(
\begin{array}{cccc}
\id_{F_+} \otimes \frac{\tau \cdot}{\lambda} &0 &\id_{F_+}\otimes \id_{\cK_+}  &0  \\
0 &\id_{F_-}\otimes \id_{\cK_-} &0 & -\id_{F_-} \otimes \frac{\tau \cdot}{\lambda}\\
\id_{F_+}\otimes \id_{\cK_-} &0 &0& 0\\
0 &0 & 0 & \id_{F_+}\otimes \id_{\cK_-}
\end{array}
\right).
$$
In conclusion, we define $G \into \widetilde G$ to be the composition of these two,
$$
(A, B) \mapsto \left(\Psi_1(A\otimes \cK_+ \oplus B \otimes \cK_-), \Psi_2(A\otimes \cK_-\oplus B\otimes \cK_+)\right).
$$
If $(A,B)\in G$ is taken as even and odd parts of a subfactorisation $A_\bullet$ of
$$
\[F_+\oplus F_- \xrightleftharpoons[1 \oplus q]{q\oplus 1} F_+ \oplus F_-\]\in \MF(G,q),
$$
then it maps to a subfactorisation $\varphi^*A_\bullet\otimes (\cS_{\varphi^*Q},\tau_Q/\lambda)[m]$ of
$$
\[(F_+ \otimes \cK_+) \oplus (F_- \otimes \cK_-) \xrightleftharpoons[1 \oplus (q+q_Q)]{(q+ q_Q)\oplus 1} (F_+ \otimes \cK_-) \oplus (F_- \otimes \cK_+) \]\in \MF(\wt{G},q+q_Q).
$$
This embedding satisfies the required properties because it sends the graph $(\Gamma_{d_+/\lambda}, \Gamma_{d_-/\lambda})$ given by the structure maps of $\Fd$ to ones $(\Gamma_{\delta_+/\lambda}, \Gamma_{\delta_-/\lambda})$ given by the structure maps of $\rho^*\Fd \otimes \cK_\bullet$, in $\lambda\neq 0$. More precisely, $\delta_\pm$ are
$$
\delta_+= \begin{pmatrix}\id \otimes \tau\cdot & d_- \otimes \id \\ d_+\otimes \id & -\id \otimes \tau \cdot \end{pmatrix},\ \
\delta_-=  \begin{pmatrix}\id \otimes \tau\cdot & d_- \otimes \id \\ d_+\otimes \id & -\id \otimes \tau \cdot \end{pmatrix}. 
$$
Hence it takes $M_{\mathrm{big}}$ to itself and we get the induced isomorphism \eqref{phixiK}.

\subsection{$(-2)$-shifted symplectic} 
Let $L$ be a $(-2)$-symplectic DM stack. Since the critical models
$$
(\Spec\C,0)\quad\text{and}\quad (\AA^{2m}, x_1y_1+\cdots +x_my_m)
$$
are equivalent, that is, their critical loci are isomorphic as $(-1)$-symplectic spaces, the morphism
$$
L \to \mathrm{Crit}(\AA^{2m}, x_1y_1+\cdots +x_my_m)
$$
is $(-1)$-Lagrangian. These two critical models applied to \eqref{KLL} gives rise to the commutative diagram
$$
\xymatrix{
K(\MF(\Spec\C, 0)) \ar[r]^-{L^{\vir}} \ar[d]_-{\eqref{eq: GenKnoerrer}}& K_0\(L,\ZZ\)\\
K(\MF(\AA^{2m}, x_1y_1+\cdots +x_my_m). \ar[ur]_-{\ \ L_{(\AA^{2m},\sum x_iy_i)}^{\vir}}&
}
$$
By Proposition \ref{1toO}, $L^{\vir}(\cO_{\Spec\C})=\widehat{\cO}_L^{\vir}$. Since \eqref{eq: GenKnoerrer} takes $\cO_{\Spec\C}$ to $\cK_{\AA^{2m}}^{\vir}[m]$, this proves $L^{\vir}_{(\AA^{2m},\sum x_iy_i)}(\cK_{\AA^{2m}}^{\vir}[m])=\widehat{\cO}_L^{\vir}$, and Theorem \ref{thm:critind}.

\section{Torus localisation}\label{sect:torusl}
When $\T:=\C^*$ acts on the spaces $L$ and $U$ given as in Section \ref{sect:normalform}, we prove the localisation formula under the following assumptions;
\begin{itemize}
\item $L$ is a scheme,
\item $f: U \to \mathbb A^1$ is $\T$-invariant, 
\item $L \to U$ is $\T$-equivariant, and
\item the symplectic structure $T_{L/U}\rt{\sim} T_{L/U}^\vee[-2]$ is $\T$-equivariant.
\end{itemize}

Under the assumptions, denoting by $\MF ^{\T}(U,f)$ the category of $\T$-equivariant matrix factorisations, we can define $\T$-equivalent Lagrangian class
\beq{LTvir}
L^{\T}_{(U,f)}: K\(\MF ^{\T} (U,f)\)\To K^{\T}_0 (L)_{\loc},
\eeq
where $K_0(L)_{\loc}$ is the localised $K$-group defined in \cite[Page 54]{OT1}. Although one can check that the constructions of $\mathrm{sp}$ in Section \ref{Sect:sp} and $\cS^{\vir}_E$ \eqref{Svir} (and others) for the Lagrangian class $L^{\vir}_{(U,f)}$ are all $\T$-equivariant, it is conceptually simpler to view this as the Lagrangian class for the Artin stacks $[L/\T]\to [U/\T]$. 

To define the Lagrangian class 
\beq{LTTvir}
L_{(U^{\T}, f^{\T})}
: K\(\MF^{\T} (U^{\T}, f^{\T})\)\To K_0^{\T}\(L^{\T}\)_{\loc}
\eeq
for $L^{\T}\to \mathrm{crit}(U^{\T}, f^{\T}:=f|_{U^{\T}})$ without assuming it to be $(-1)$-Lagrangian, as in \eqref{Lvir}, we need to check if
\begin{itemize}
\item $f^{\T}$ lies in $I_{L^{\T}/U^{\T}}^2$ to have $\mathrm{sp}_{\T}:\MF (U^{\T},f^{\T})\to \mf (C_{L^{\T}/U^{\T}}, f^{\T}/\lambda^2)$,
\item the tangent complex $T_{L^{\T}/U^{\T}}$ is isomorphic to its dual shifted by $-2$ to have the Clifford factorisation on the Behrend-Fantechi cone with the function $q^{\T}$ induced by the quadratic form on $T_{L^{\T}/U^{\T}}$,
\item the function $f^{\T}/\lambda^2$ and $q^{\T}$ are the same on the Behrend-Fantechi cone.
\end{itemize}

Suppose that there is a local $\T$-equivariant embedding $L\into A$ with $A$ smooth over $U$. Since the fixed loci $L^{\T}$ is the fibre product $L\times_{A} A^{\T}$, the pullback of the defining ideal $I_{L/A}\subset \cO_A$ to $A^{\T}$ becomes $I_{L^{\T}/A^{\T}}$\vspace{-0.5mm}. Since $f$ lies in $I_{L/A}^2$\vspace{-0.5mm}, the restriction $f^{\T}$ also lies in $I_{L^{\T}/A^{\T}}^2$\vspace{-0.5mm}. By Footnote \ref{footnote3}, this proves $f^{\T}$ lies in $I_{L^{\T}/U^{\T}}^2$. 

Next, we claim that the fixed part of the restricted tangent complex $T_{L/U}|_{L^{\T}}$ is isomorphic to $T_{L^{\T}/U^{\T}}$. In fact, $T_{L/U}$ is the cocone of $T_L\to p^*T_U$, where $p$ denotes the morphism $L\to U$. Its fixed part on $L^{\T}$ is the cocone of $T_{L}|_{L^{\T}}^{\mathrm{fix}}\to p^*T_{U}|_{L^{\T}}^{\mathrm{fix}}$. It is proven in \cite{GP} that $T_{L}|_{L^{\T}}^{\mathrm{fix}}$ is isomorphic to the tangent complex $T_{L^{\T}}$ of the fixed loci $L^{\T}$. Since $U$ is smooth, $p^*T_{U}|_{L^{\T}}^{\mathrm{fix}}$ is isomorphic to the tangent bundle $p_{\T}^*T_{U^{\T}}$, where $p_{\T}:L^{\T}\to U^{\T}$ is the induced morphism by $p$. Hence the fixed part is the cocone of $T_{L^{\T}}\to p_{\T}^*T_{U^{\T}}$, which is isomorphic to $T_{L^{\T}/U^{\T}}$. This proves the claim, and hence we obtain the induced symplectic structure $T_{L^{\T}/U^{\T}}\cong T_{L^{\T}/U^{\T}}^\vee[-2]$.

Again with the local $\T$-equivariant embedding $L\into A$ with $A$ smooth over $U$, the deformation to the normal cone $M^o_{L^{\T}/A^{\T}}$\vspace{-0.9mm} lies in $M^o_{L/A}$ because $L^{\T}$ is the fibre product $L\times_{A} A^{\T}$. When we identify the central fibre $C_{L/A}$ with the Behrend-Factechi cone $C\subset E$ locally, the central fibre $C_{L^{\T}/A^{\T}}$\vspace{-0.7mm} of the former is the intersection with the fixed part $C|_{L^{\T}}\cap E|_{L^{\T}}^{\mathrm{fix}}$\vspace{-0.7mm}. Hence, the induced function on $C_{L^{\T}/A^{\T}}$\vspace{-0.5mm} is $f^{\T}/\lambda^2$ if we view it as the central fibre of the deformation; while it is the restricted quadratic function $q^{\T}:=q|_{C_{L^{\T}/A^{\T}}}$\vspace{-0.5mm} if we view it as the intersection. The parallel arguments in Propositions \ref{thm:indpullback} and \ref{1toO} show that $L_{(U^{\T},f^{\T})}$ defined in \eqref{LTTvir} is independent of the choice of normal form and sends $1$ to $\widehat{\cO}^{\vir}_{L^{\T}}$ when $L$ is $(-2)$-symplectic. 

\smallskip
With the Lagrangian classes \eqref{LTvir} and \eqref{LTTvir}; and embeddings $i_U:U^{\T}\into U$ and $i_L:L^{\T}\into L$ of the fixed loci, we would like to prove the following localisation theorem.

\begin{thm}\label{main4}
Suppose that the $(-1)$-Lagrangian $L\to \mathrm{crit}(f)$ is acted on by $\T$ satisfying the assumptions in the beginning of the section. Then the following diagram is commutative,
\beq{localcomm}
\xymatrixcolsep{11pc}
\xymatrix{
K\(\MF^{\mathsf{T}}(U, f)\) \ar[r]^-{L^{\T}_{(U,f)}} \ar[d]_-{i_U^*} & K_0^{\mathsf{T}}(L)_\mathrm{loc} \\
K\(\MF^{\mathsf{T}}(U^{\mathsf{T}}, f^{\mathsf{T}})) \ar[r]^-{\frac{L_{(U^{\T}, f^{\T})}}{\mathfrak{e}_{\T}(N_{U^{\T}/U})\cdot \sqrt{\mathfrak {e}_{\mathsf{T}}}(T_{L/U}^\mathrm{mov})}}  & K_0^{\mathsf{T}}\(L^{\mathsf{T}}\)_\mathrm{loc}, \ar[u]_-{i_{L*}}
}
\eeq
where $\mathrm{e}_{\T}$ and $\surd\mathfrak{e}_{\mathsf{T}}$ denote the equivariant $K$-theoretic Euler class and square root Euler class defined in \cite[Page 56]{OT1}, respectively; and $T_{L/U}^{\mathrm{mov}}$ is the moving part of $T_{L/U}|_{L^{\T}}$. In particular when $(U,f)=(\Spec\C,0)$, this reproves the ordinary localisation theorem \cite[Theorem 7.3]{OT1}.
\end{thm}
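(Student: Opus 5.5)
Since $L$ is a scheme, the plan is to reduce the statement to the ordinary $K$-theoretic localisation argument of \cite[Section 7]{OT1}, applied to the Clifford factorisation on the Behrend--Fantechi cone, with the specialisation functor carried along by the equivariance of the deformation to the normal cone. Concretely, we choose a $\T$-equivariant normal form $\{T\to E\to T^*\}$ of $T_{L/U}$; this is possible by the equivariant resolution property, and by Proposition~\ref{thm:indpullback} the choice does not matter. Restricted to $L^{\T}$, it splits into fixed and moving parts. The fixed part $\{T^{\mathrm{fix}}\to E^{\mathrm{fix}}\to T^{\mathrm{fix}*}\}$ is a normal form of $T_{L^{\T}/U^{\T}}$ by the discussion preceding the theorem. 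The moving part $E^{\mathrm{mov}}$ is again an $SO$-bundle, because the pairing is $\T$-invariant and therefore pairs weight $w$ with weight $-w$. Working with a local equivariant embedding $L\subset A$ smooth over $U$, we also have $M^o_{L^{\T}/A^{\T}}\subset M^o_{L/A}$ and $C_{L^{\T}/A^{\T}}=C|_{L^{\T}}\cap E^{\mathrm{fix}}$.

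\textbf{Step 1 (specialisation commutes with restriction to fixed loci).} For $\Fd\in\MF^{\T}(U,f)$, the blowup $M^o_{\Fd}$ is built from graphs of $d_\pm/\lambda$, and its restriction to the fixed locus contains $M^o_{i_U^*\Fd}$. We want to show that $j^*\mathrm{sp}(\Fd)=\mathrm{sp}_{\T}(i_U^*\Fd)$ in $K^{\T}(\mf(C^{\mathrm{fix}},q^{\T}))$ after localisation, where $j:C^{\mathrm{fix}}\into C|_{L^{\T}}$ denotes the inclusion. The route is through the equivariant Thomason localisation theorem. Both $C_{\Fd}\to C$ and $C_{i_U^*\Fd}\to C^{\mathrm{fix}}$ are proper and equivariant. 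The $\T$-fixed locus of $C_{\Fd}$ lies over $C^{\mathrm{fix}}$, and we identify it with $C_{i_U^*\Fd}$, since the fixed part of the graph closure is the graph closure of the fixed part. The tautological factorisation $\xi_\bullet$ restricts to the tautological factorisation there. Applying localisation on $C_{\Fd}$ and pushing forward, the excess normal bundle contributions should cancel against $\mathfrak e_{\T}(N_{U^{\T}/U})$ together with the moving part of $T$.

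\textbf{Step 2 (Clifford factorisation and twists).} On $C|_{L^{\T}}\cap E^{\mathrm{fix}}$ we decompose
\[
\cS^{\vir}_E\cong\cS^{\vir}_{E^{\mathrm{fix}}}\boxtimes\cS_{E^{\mathrm{mov}}}.
\]
By Corollary~\ref{corsE}, the restriction of $\cS_{E^{\mathrm{mov}}}$ with its tautological section to the zero section is $\sqrt{\mathfrak e_{\T}}(E^{\mathrm{mov}})$ up to the $\surd\det$ twist. Combining this with $\surd{\det T^*}=\surd{\det T^{\mathrm{fix}*}}\cdot\surd{\det T^{\mathrm{mov}*}}$ and $\mathfrak e_{\T}(T^{\mathrm{mov}})$, we obtain
\[
\sqrt{\mathfrak e_{\T}}(T_{L/U}^{\mathrm{mov}})^{-1}=\mathfrak e_{\T}(T^{\mathrm{mov}})\big/\sqrt{\mathfrak e_{\T}}(E^{\mathrm{mov}})\cdot(\text{twist}),
\]
exactly as in \cite[Theorem 7.3]{OT1}. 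The localisation formula for the cone then follows the argument of \cite[Section 7]{OT1}: the cosection-free version gives
\[
\cS^{\vir}_E(G)=i_{L*}\Bigl(\cS^{\vir}_{E^{\mathrm{fix}}}(j^*G)\big/\sqrt{\mathfrak e_{\T}}(E^{\mathrm{mov}})\Bigr)
\]
for $G\in K^{\T}(\mf(C,q))$. To prove this, we degenerate $C|_{L^{\T}}$ to $C^{\mathrm{fix}}\times C^{\mathrm{mov}}$ inside $E^{\mathrm{fix}}\oplus E^{\mathrm{mov}}$ using the $\AA^1$-homotopy argument already used in Proposition~\ref{thm:indpullback}. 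Assembling Steps 1 and 2 yields the diagram \eqref{localcomm}. When $(U,f)=(\Spec\C,0)$, Example~\ref{ex2} gives $\mathrm{sp}=\cO_C$, so the formula reduces to \cite[Theorem 7.3]{OT1}.

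\textbf{Main obstacle.} The main obstacle is Step 1. The blowup $M^o_{\Fd}$ need not be well behaved along fixed loci; for instance, the fixed part of the closure might differ from the closure of the fixed part. It is also unclear that the normal contributions match $\mathfrak e_{\T}(N_{U^{\T}/U})$ on the nose. The first approach is to deform instead: use Proposition~\ref{Kcomm} to pass to an equivariant smooth cover and a global cutout chart, and then compare specialisations via the $\AA^1$-family of cones. If that fails, the fallback is to apply Thomason localisation directly in $K^{\T}(\mf(C,q))_{\loc}$ before specialising, using the fact that $\mathrm{sp}$ commutes with flat pullback and proper pushforward.
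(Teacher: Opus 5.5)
There is a genuine gap, and it sits exactly where you suspected. It is not merely technical: Step~1 is not well posed, and its geometric input is false.

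First, $j^*\mathrm{sp}(\Fd)$ (and $j^*G$ in Step~2) is undefined. $\mathrm{sp}(\Fd)$ is only a coherent factorisation, and in general neither $L^{\T}\into L$ nor $C^{\mathrm{fix}}\into C|_{L^{\T}}$ is flat or regular. This is why the paper never restricts $\mathrm{sp}(\Fd)$ itself, only the locally free $\xi_\bullet$ on covers.

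Second, the identification of the fixed locus of $C_{\Fd}$ with $C_{i_U^*\Fd}$ already fails for $(U,f)=(\Spec\C,0)$ and $\Fd=\cO_U$. There it asserts that the fixed locus of the cone is the cone of the fixed locus. For instance, let $L=Z(s)$ for the isotropic section $s=(xy,0)$ of the trivial hyperbolic plane $E=\C^2$ over $V=\C^2_{x,y}$, with weights $\pm1$ on $x,y$ and trivial weights on $E$. Then $C=C_{L/V}$ is the trivial line bundle over $L$ spanned by the weight-zero class of $xy$, so its fixed locus contains the whole fibre over the origin. On the other hand, $L^{\T}=V^{\T}$ is the origin and $C_{L^{\T}/V^{\T}}$ is a point. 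Since $\T$ acts trivially on that line, $[\cO_{\mathrm{pt}}]=0\neq[\cO_{\C}]$ there, so the difference survives localisation. This mismatch of cones is exactly what virtual localisation has to overcome, and it cannot be absorbed into ``excess normal bundle contributions''.

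Moreover, on the singular $C_{\Fd}$ Thomason localisation only says that pushforward from the fixed locus becomes an isomorphism. It gives no explicit inverse, so it cannot produce $\mathfrak e_{\T}(N_{U^{\T}/U})$ and $\mathfrak e_{\T}(T^{\mathrm{mov}})$. Your fallbacks do not address this:
\begin{itemize}
\item $i_U^*$ is a non-flat restriction;
\item compatibility of $\mathrm{sp}$ is only available for smooth pullback (Proposition~\ref{Kcomm});
\item a global equivariant cutout chart merely replaces $C_{L/U}$ by $C_{L/V}$, with the same problem.
\end{itemize}
Likewise, in Step~2 the factorisation $G$ lives on the cone over all of $L$. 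Passing from $L$ to $L^{\T}$ must therefore itself be a specialisation, not a restriction to $C|_{L^{\T}}$.

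The missing idea is to connect $C_{L/U}$ to $C_{L^{\T}/U^{\T}}$ by deformations of cones, and to transport the specialised factorisation along each of them. The paper first uses that $i_{L*}$, $0_*$ and the ordinary specialisation $\mathfrak{sp}:K^{\T}_0(L)_{\loc}\to K^{\T}_0(C_{L^{\T}/L})_{\loc}$ are isomorphisms (for $i_{L*}$ this uses that $L$ is a scheme). It then pushes further into the bundle $\overline T^{\mathrm{mov}}$ of Lemma~\ref{repTL}, where inverses are explicit. Next it runs through the chain \eqref{sqarrows}:
\begin{itemize}
\item the deformation to the normal cone of $L^{\T}$ in $C_{L/U}$;
\item the Kim--Kresch--Pantev deformation for $L^{\T}\to L\to U$, from $C_{L^{\T}/U}$ to $C_{L^{\T}/C_{L/U}}$;
\item the one for $L^{\T}\to U^{\T}\to U$, from $C_{L^{\T}/U}$ to $C_{L^{\T}/N}\cong C_{L^{\T}/U^{\T}}\oplus N$.
\end{itemize}
Each arrow needs three ingredients. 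The first is its own specialisation $\mathrm{sp}_1,\mathrm{sp}_2,\mathrm{sp}_3$, built from $\xi_\bullet$ on suitable covers. The second is the compatibility of cohomology with proper pushforward (Lemma~\ref{OS220}). The third is a double deformation space over $\AA^1_\lambda\times\AA^1_\nu$ on which $0^!_\nu0^!_\lambda=0^!_\lambda0^!_\nu$. Only as the degeneration of $\mathrm{sp}_2(\Fd)$ along the last deformation does $\mathrm{pr}_N^*\mathrm{sp}_{\T}(i_U^*\Fd)$ appear, and this is the correct replacement for your Step~1.

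Your Step~2 splitting $\cS^{\vir}_E=\cS^{\vir}_{E^{\mathrm{mov}}}\otimes\cS^{\vir}_{E^{\mathrm{fix}}}$, with $\cS^{\vir}_{E^{\mathrm{mov}}}$ becoming $\surd{\mathfrak e}_{\T}(E^{\mathrm{mov}})$ on a cone that misses $E^{\mathrm{mov}}$, is essentially the paper's final step. There it is combined with the self-intersection formula $i^*i_*=\mathfrak e_{\T}(T^{\mathrm{mov}}\oplus N)$ for the zero section $i$ of $T^{\mathrm{mov}}\oplus N$; this is where $\mathfrak e_{\T}(N_{U^{\T}/U})$ and $\mathfrak e_{\T}(T^{\mathrm{mov}})$ actually come from. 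It also uses $\surd{\mathfrak e}_{\T}(T^{\mathrm{mov}}_{L/U})=\mathfrak e_{\T}(T^{\mathrm{mov}})\surd{\det T^{\mathrm{mov}}}/\surd{\mathfrak e}_{\T}(E^{\mathrm{mov}})$; your displayed identity has the left-hand side inverted.
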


\subsection*{Why do we need $L$ scheme} The only place we have to use the assumption that $L$ is a scheme is the localisation theorem;
$$
i_{L*}:K_0^{\T}(L^{\T})_{\loc}\ \rt{\sim}\ K_0^{\T}(L)_{\loc}
$$
which is true when $L$ is a scheme \cite[Theorem 3.3(a)]{EG:Lo}. In this case, we also have the isomorphism
$$
0_*:K_0^{\T}(L^{\T})_{\loc}\ \rt{\sim}\ K_0^{\T}(C_{L^{\T}/L})_{\loc}
$$
induced by the zero section of the normal cone $C_{L^{\T}/T}$, however $L$ need not be a scheme for this. We can use a bundle containing $C_{L^{\T}/L}$ to show $0_*$ is an isomorphism. Once we have a condition on $L$ that guarantees $i_{L*}$ to be an isomorphism, we have the following preparation for the proof of the localisation theorem, Theorem \ref{main4}.

The two isomorphisms are connected by the ordinary specialisation map
$$
\xymatrix@C=2mm{
K_0^{\T}(L)_{\loc} \ar[rr]^-{\mathfrak{sp}} && K_0^{\T}(C_{L^{\T}/L})_{\loc} \\
& K_0^{\T}(L^{\T})_{\loc}. \ar[ru]^-{0_*} \ar[lu]_-{i_{L*}}&
}
$$ 
Note that $\mathfrak{sp}$ is defined using the deformation to the normal cone $M^o_{L^{\T}/L}$\vspace{-0.5mm}. Indeed, for an element $F\in K_0^{\T}(M^o_{L^{\T}/L})_{\loc}$\vspace{-0.7mm}, the Gysin pullback to $1\in \AA^1$ is mapped by $\mathfrak{sp}$ to the pullback to $0\in \AA^1$
$$
\mathfrak{sp}\(1^!F\)\= 0^!F.
$$
Hence, a vector bundle $G$ on $L$ is mapped by $\mathfrak{sp}$ to its pullback to $C_{L^{\T}/L}$ because $L$ pulls back to a bundle on $M^o_{L^{\T}/L}$\vspace{-0.3mm} whose Gysin pullbacks give such a relation.  

The key point is that the specialisation $\mathfrak{sp}$ is also an isomorphism. So instead of showing the commutativity of \eqref{localcomm}, we will prove that of
\beq{localcomm2}
\xymatrixcolsep{11pc}
\xymatrix{
K\(\MF^{\mathsf{T}}(U, f)\) \ar[r]^-{\mathfrak{sp}\circ L^{\T}_{(U,f)}} \ar[d]_-{i_U^*} & K_0^{\mathsf{T}}(C_{L^{\T}/L})_\mathrm{loc} \\
K\(\MF^{\mathsf{T}}(U^{\mathsf{T}}, f^{\mathsf{T}})) \ar[r]^-{\frac{L_{(U^{\T}, f^{\T})}}{\mathfrak{e}_{\T}(N_{U^{\T}/U})\cdot\sqrt{\mathfrak {e}_{\mathsf{T}}}(T_{L/U}^\mathrm{mov})}}  & K_0^{\mathsf{T}}\(L^{\mathsf{T}}\)_\mathrm{loc}. \ar[u]_-{0_{*}}
}
\eeq

\subsection{Representatives of tangent complexes}
As a preparation of the proof of Theorem \ref{main4}, we find good representatives of tangent complexes starting from the given $\T$-equivariant normal form $\{T\to E\to T^*\}$ \eqref{n2} of $T_{L/U}$. First of all, we find a good representative of $T_L$.

\begin{lem}\label{repTL}
There is an extension $\overline{T}\in \Ext^1(T,p^*T_U)$ on $L$ and a bundle map $\overline{T}\to E$ such that $T_L$ is represented by $\{\overline{T}\to E\to T^*\}$. 
\end{lem}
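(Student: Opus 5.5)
The plan is to obtain the representative of $T_L$ from the exact triangle
$$
T_{L/U}\To T_L\To p^*T_U\rt{\delta} T_{L/U}[1],
$$
where $T_L$ is the cocone of the connecting morphism $\delta$. We represent $T_{L/U}$ by the normal form $\{T\rt{a}E\rt{a^*}T^*\}$ in degrees $0,1,2$. Then $T_L$ is the cone of $\delta[-1]:p^*T_U[-1]\to T_{L/U}$. Since $p^*T_U$ is a vector bundle placed in degree $0$, $p^*T_U[-1]$ sits in degree $1$.

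\textbf{Step 1: realise $\delta$ as a chain map.} Using the resolution property of $L$ and the freedom to modify the normal form, as in the proof of \cite[Proposition 4.1]{OT1}, we arrange that $\delta$ is represented by an honest chain map
$$
\{0\to p^*T_U\to 0\}\To \{T\to E\to T^*\}[1],
$$
that is, by a bundle map $\beta:p^*T_U\to T$. This requires $\delta\circ a=0$, i.e. $a\circ\beta=0$, which fails for a general representative. To fix this, first replace the normal form by one whose degree-zero term $T$ is "large enough" that every map from the bundle $p^*T_U$ to the complex lifts. This uses the resolution property: precompose with a surjection from a locally free sheaf and enlarge $T$ by adding an acyclic piece $K\to K\oplus K^*\to K^*$, i.e. reverse an isotropic reduction.

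\textbf{Step 2: form the cone.} The cone of $\beta[-1]$ is the complex
$$
\{\,p^*T_U\oplus T\to E\to T^*\,\}
$$
with a twisted differential in degree $0$. Strictly, a morphism to the degree-$0$ term in the cone picture produces a complex in degrees $-1,0,1,2$:
$$
p^*T_U\rt{\beta} T\to E\to T^*.
$$
It is cleaner to dualise. The map $\delta$ corresponds to a class in $\Ext^1(T,p^*T_U)$ obtained by composing $T\to T_{L/U}$ with $T_{L/U}\to p^*T_U[-1]$ (the rotated triangle $p^*T_U[-1]\to T_{L/U}\to T_L$, read backwards). So $T_L$ is computed as follows: $\overline T$ is the extension $0\to p^*T_U\to \overline T\to T\to 0$ classified by this class, and $T_L=\{\overline T\to E\to T^*\}$. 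The differential $\overline T\to E$ is the composite $\overline T\to T\rt{a}E$, corrected by the extension data.

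\textbf{Step 3: check the identification.} We verify that the resulting three-term complex maps to $T_L$ compatibly with the triangle, using the octahedral axiom. The subcomplex $\{0\to E\to T^*\}$ together with the quotient data reproduces the triangle $T_{L/U}\to T_L\to p^*T_U$: the kernel $p^*T_U\subset\overline T$ in degree $0$ gives the map to $p^*T_U$, and the quotient $\overline T\to T$ gives the inclusion of $T_{L/U}$.

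\textbf{Main obstacle.} The delicate point is Step 1/2: ensuring that the connecting morphism is represented by an extension at the level of sheaves, i.e. that the class lies in $\Ext^1(T,p^*T_U)$ rather than in a hypercohomology group involving $E$ and $T^*$. The composite $T\to T_{L/U}$ followed by $T_{L/U}\to p^*T_U[-1]$ vanishes on $E$ and $T^*$ for degree reasons only after replacing $T$ by a sufficiently negative locally free sheaf. I would first try doing this replacement via the resolution property and the enlargement move of \cite[Section 4]{OT1}. Then I would check that the enlarged normal form is again a $\T$-equivariant normal form, so that the subsequent localisation argument applies.
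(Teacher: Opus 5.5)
There is a genuine gap, and it sits at the point you flag yourself.

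\textbf{Step 1 has the degrees wrong, and the proposed remedy cannot work.} In $T_{L/U}[1]=\{T\to E\to T^*\}$, placed in degrees $-1,0,1$, the degree-$0$ term is $E$. So an honest chain map from $p^*T_U$ is a map $\gamma\colon p^*T_U\to E$ with $a^*\gamma=0$, not $\beta\colon p^*T_U\to T$ with $a\beta=0$. If such a $\gamma$ existed, your first formula in Step~2 would be correct: $\overline T=T\oplus p^*T_U$, split, with differential $(a,\gamma)$. But $\gamma$ need not exist. Since $p^*T_U$ is not projective, chain maps out of it only capture the global-sections part of $\operatorname{Hom}(p^*T_U,T_{L/U}[1])=\mathbb{H}^0(L,(p^*T_U)^\vee\otimes T_{L/U}[1])$. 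The obstruction for $\delta$ lies in $H^1(L,(p^*T_U)^\vee\otimes T)$.

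Your remedy, enlarging the normal form by reversing an isotropic reduction, cannot remove this obstruction. Suppose $\{T\to E\to T^*\}$ is the reduction of $\{T'\to E'\to T'^*\}$ by $K\subset T'$, and $\gamma'\colon p^*T_U\to E'$ satisfies $a'^*\gamma'=0$. Then $\gamma'$ lands in $K^\perp$, because $E'\to K^*$ factors through $a'^*$. Its image in $K^\perp/K\cong E$ is a chain map into the original normal form representing the same class. So enlargement never makes a non-representable $\delta$ representable.

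\textbf{Step 2 does not repair this, and the extension points the wrong way.} The maps ``$T\to T_{L/U}$'' and ``$T_{L/U}\to p^*T_U[-1]$'' do not exist. With $p^*T_U\subset\overline T\twoheadrightarrow T$, the sub and the quotient can only give maps $p^*T_U\to X\to T_{L/U}$, the reverse of what Step~3 asserts. To realise $T_{L/U}\to T_L\to p^*T_U$ you need $T\subset\overline T$, so that $\{T\to E\to T^*\}$ is a subcomplex, with quotient $p^*T_U$. This is also how the lemma is used later: the map $\overline T\to p^*T_U$, and $T^{\mathrm{mov}}\subset\overline T^{\mathrm{mov}}$ with quotient $N$ in Corollary~\ref{grep2}. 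The $\Ext^1(T,p^*T_U)$ in the statement has to be read in that sense.

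\textbf{The missing idea: resolve the source, not the target.} Your phrase ``precompose with a surjection from a locally free sheaf'' gestures at this, and it is the paper's proof.
\begin{itemize}
\item Use the resolution property to represent $\delta$ by a chain map from a locally free resolution $0\to F_{-1}\to F_0\to p^*T_U\to 0$. Concretely, this means maps $F_{-1}\to T$ and $F_0\to E$ compatible with the differentials. A longer resolution can be truncated, since $\ker(F_0\to p^*T_U)$ is locally free.
\item The cocone is then $\{F_{-1}\to F_0\oplus T\to E\to T^*\}$ in degrees $-1,\dots,2$.
\item Since $F_{-1}\to F_0\oplus T$ is injective, dividing by the acyclic subcomplex $F_{-1}\cong\operatorname{im}F_{-1}$ leaves $\{\overline T\to E\to T^*\}$ with $\overline T=\mathrm{coker}(F_{-1}\to F_0\oplus T)$.
\item This $\overline T$ is locally free and sits in $0\to T\to\overline T\to p^*T_U\to 0$.
\end{itemize}
This $\overline T$ splits exactly when a $\gamma$ as in your Step~1 exists, since a splitting $s$ gives $\gamma=d\circ s$. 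No change of normal form is needed, so equivariance in Section~\ref{sect:torusl} comes for free.
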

\begin{proof} By the resolution property on $L$, we can find a quasi-isomorphic complex $\{F_{-1}\to F_0\}$ of $p^*T_U$ with the chain map
$$
\xymatrix{
&&& & T^*   \\
p^*T_U\ar[r] & T_{L/U}[1] &= &F_0\ar[r] &E\ar[u] \\
&&&F_{-1}\ar[r]\ar[u]& T. \ar[u]
}
$$
Taking $\overline{T}:=\mathrm{coker} (F_{-1}\to F_0\oplus T)$, we have all desired properties.
\end{proof}

By \cite{GP}, the decomposition $T_L|_{L^{\T}}=T_L^{\mathrm{fix}}\oplus T_L^{\mathrm{mov}}$ of the restriction of the tangent complex $T_L$ to the fixed locus $L^{\T}$ into the fixed and moving parts gives the induced representatives of $T_{L^{\T}}$ and $T_{L^{\T}/L}[1]$,
$$ 
T_{L\bullet}^{\mathrm{fix}}:=\{\overline{T}^{\mathrm{fix}}\to E^{\mathrm{fix}}\to T^{*\mathrm{fix}}\}\ \ \text{and}\ \ T_{L\bullet}^{\mathrm{mov}}:=\{\overline{T}^{\mathrm{mov}}\to E^{\mathrm{mov}}\to T^{*\mathrm{mov}}\},
$$
respectively. We denote by $T_{L\bullet}^{\mathrm{fix}}$ and $T_{L\bullet}^{\mathrm{mov}}$ the representatives of $T_{L^{\T}}$ and $T_{L^{\T}/L}[1]$, respectively, above. Then 
$$
T_{L\bullet}^{\mathrm{fix}}\ \cong\  \mathrm{Cone}\(T_{L\bullet}^{\mathrm{mov}}\rt{\id}T_{L\bullet}^{\mathrm{mov}}\) \oplus T_{L\bullet}^{\mathrm{fix}}
$$
gives another representative of $T_{L^{\T}}$. This complex has a canonical surjection to $T_{L\bullet}^{\mathrm{fix}}\oplus T_{L\bullet}^{\mathrm{mov}}$ which represents $T_L|_{L^{\T}}$, whose kernel $T_{L\bullet}^{\mathrm{mov}}[-1]$ represents $T_{L^{\T}/L}$. This provides us a short exact sequence of complexes representing the exact triangle $T_{L^{\T}/L}\to T_{L^{\T}} \to T_{L}|_{L^{\T}}$,
\beq{bigrep}
\xymatrix@R=5mm{
0\ar[r]& T^{*\mathrm{mov}}\ar[r] & T^{*\mathrm{mov}} \ar[r] & 0 \ar[r] & 0\\
0\ar[r]& E^{\mathrm{mov}} \ar[u]\ar[r] & E^{\mathrm{mov}}\oplus T^{*\mathrm{mov}}\oplus T^{*\mathrm{fix}} \ar[u] \ar[r] & T^{*\mathrm{mov}}\oplus T^{*\mathrm{fix}} \ar[u]\ar[r] & 0 \\
0\ar[r]& \overline{T}^{\mathrm{mov}} \ar[u]\ar[r] & \overline{T}^{\mathrm{mov}}\oplus E^{\mathrm{mov}}\oplus E^{\mathrm{fix}} \ar[u] \ar[r] & E^{\mathrm{mov}}\oplus E^{\mathrm{fix}} \ar[u]\ar[r] & 0 \\
0\ar[r]& 0 \ar[u] \ar[r] & \overline{T}^{\mathrm{mov}} \oplus \overline{T}^{\mathrm{fix}} \ar[u]\ar[r] & \overline{T}^{\mathrm{mov}} \oplus \overline{T}^{\mathrm{fix}} \ar[u]\ar[r] & 0.
}
\eeq

Using the morphism $\overline{T}|_{L^{\T}}\cong \overline{T}^{\mathrm{mov}} \oplus \overline{T}^{\mathrm{fix}} \to p^*T_U|_{L^{\T}}$ constructed in Lemma \ref{repTL}, we obtain the following Corollary from \eqref{bigrep}.

\begin{cor}\label{grep1} The exact triangle $T_{L^{\T}/L}\to T_{L^{\T}/U} \to T_{L/U}|_{L^{\T}}$ is represented by the following short exact sequence of complexes,
$$ 
\xymatrix@R=5mm{
0\ar[r]& T^{*\mathrm{mov}}\ar[r] & T^{*\mathrm{mov}} \ar[r] & 0 \ar[r] & 0\\
0\ar[r]& E^{\mathrm{mov}} \ar[u]\ar[r] & E^{\mathrm{mov}}\oplus T^{*\mathrm{mov}}\oplus T^{*\mathrm{fix}} \ar[u] \ar[r] & T^{*\mathrm{mov}}\oplus T^{*\mathrm{fix}} \ar[u]\ar[r] & 0 \\
0\ar[r]& \overline{T}^{\mathrm{mov}} \ar[u]\ar[r] & \overline{T}^{\mathrm{mov}}\oplus E^{\mathrm{mov}}\oplus E^{\mathrm{fix}} \ar[u] \ar[r] & E^{\mathrm{mov}}\oplus E^{\mathrm{fix}} \ar[u]\ar[r] & 0 \\
0\ar[r]& 0 \ar[u] \ar[r] & T^{\mathrm{mov}} \oplus T^{\mathrm{fix}} \ar[u]\ar[r] & T^{\mathrm{mov}} \oplus T^{\mathrm{fix}} \ar[u]\ar[r] & 0.
}
$$
In particular, we obtained a representative of $T_{L^{\T}/U}$ in the middle column.
\end{cor}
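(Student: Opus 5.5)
The plan is to obtain Corollary \ref{grep1} from \eqref{bigrep} by pulling back along $p^*T_U$. Recall the two relevant triangles. The relative tangent complexes are cocones:
\[
T_{L^{\T}/U}=\mathrm{Cocone}\(T_{L^{\T}}\to p^*T_U|_{L^{\T}}\),\qquad
T_{L/U}|_{L^{\T}}=\mathrm{Cocone}\(T_L|_{L^{\T}}\to p^*T_U|_{L^{\T}}\).
\]
The triangle $T_{L^{\T}/L}\to T_{L^{\T}/U}\to T_{L/U}|_{L^{\T}}$ is the cocone of the map of triangles obtained by composing with the fixed target $p^*T_U|_{L^{\T}}$.

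So I would take the short exact sequence of complexes \eqref{bigrep}. The middle column is a representative of $T_{L^{\T}}$ and the right column is a representative of $T_L|_{L^{\T}}$. Both bottom terms equal $\overline{T}|_{L^{\T}}=\overline{T}^{\mathrm{mov}}\oplus\overline{T}^{\mathrm{fix}}$, and the middle-to-right map is the identity there. The morphism $\overline{T}\to p^*T_U$ from Lemma \ref{repTL} then gives compatible chain maps from the middle and right columns to $p^*T_U|_{L^{\T}}$ placed in the degree of $\overline{T}$. The left column $T_{L\bullet}^{\mathrm{mov}}[-1]$ has zero bottom term, so its composite map to $p^*T_U$ is zero. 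The first thing to check is that these chain maps realise, in the derived category, the morphisms $T_{L^{\T}}\to p^*T_U$ and $T_L|_{L^{\T}}\to p^*T_U$. This holds for $T_L$ by the construction in Lemma \ref{repTL}. For $T_{L^{\T}}$ it holds because the canonical surjection onto $T_L|_{L^{\T}}$ commutes with these maps.

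Next I would replace each column by the kernel of its degree-wise surjection onto $p^*T_U$. The surjectivity is arranged in Lemma \ref{repTL}: there $\overline{T}=\mathrm{coker}(F_{-1}\to F_0\oplus T)$ surjects onto $F_0$, with kernel $T$, and $F_0$ represents $p^*T_U$ up to $F_{-1}$. Taking kernels replaces $\overline{T}^{\mathrm{mov}}\oplus\overline{T}^{\mathrm{fix}}$ by $T^{\mathrm{mov}}\oplus T^{\mathrm{fix}}$ in the bottom row of the middle and right columns. The left column is unchanged, since it already maps to zero. Because the map onto $p^*T_U$ is the same surjection in both columns, the kernel construction preserves exactness of the row sequences. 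Concretely, the snake lemma applied row-wise shows that the bottom row remains $0\to0\to T\to T\to0$ and that the other rows are untouched.

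The main subtlety is that $p^*T_U$ is represented by the two-term complex $F_{-1}\to F_0$, not by the bundle $F_0$ alone. To handle it, I would first work with the cone representatives built from $\{F_{-1}\to F_0\}$. I would then observe that the map $F_{-1}\to\overline{T}$ is injective with quotient $T$. Quotienting $F_{-1}$ out, which is quasi-isomorphic to killing the acyclic piece $F_{-1}\xrightarrow{\id}F_{-1}$, yields exactly the columns displayed in Corollary \ref{grep1}. The cocones are quasi-isomorphic to $T_{L^{\T}/U}$ and $T_{L/U}|_{L^{\T}}$, and the left column is $T_{L^{\T}/L}$, so the resulting short exact sequence represents the required triangle. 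As a consistency check, the right column is $\{T\to E\to T^*\}$ with the moving and fixed parts split, that is, the normal form of $T_{L/U}$ restricted to $L^{\T}$, as expected.
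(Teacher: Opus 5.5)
Your route is the same as the paper's, which derives the corollary from \eqref{bigrep} in one line using the map $\overline{T}|_{L^{\T}}\to p^*T_U|_{L^{\T}}$ of Lemma \ref{repTL}. You compose the middle and right columns with the degree-zero map $\overline{T}^{\mathrm{mov}}\oplus\overline{T}^{\mathrm{fix}}\to p^*T_U|_{L^{\T}}$ and pass to kernel subcomplexes. These represent the cocones $T_{L^{\T}/U}$ and $T_{L/U}|_{L^{\T}}$. Your check that the middle column's map factors through the surjection onto the right column, and that the left column maps to zero, is what is needed.

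However, your justification that this map is surjective with kernel $T$ is wrong as written.
\begin{itemize}
\item $\overline{T}=\mathrm{coker}(F_{-1}\to F_0\oplus T)$ does not surject onto $F_0$. The projection $F_0\oplus T\to F_0$ sends the image of $x\in F_{-1}$ to $dx\neq 0$, so it does not descend to the cokernel.
\item Likewise $F_{-1}$ maps to zero in $\overline{T}$, so ``$F_{-1}\to\overline{T}$ is injective with quotient $T$'' is false.
\end{itemize}

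The correct fact is as follows. Since $\{F_{-1}\to F_0\}\simeq p^*T_U$ and $p^*T_U$ is locally free ($U$ is smooth), $F_{-1}\to F_0$ is a subbundle. A direct check then gives an exact sequence of vector bundles
$$
0\To T\To \overline{T}\To F_0/F_{-1}\cong p^*T_U\To 0 .
$$
So $\overline{T}$ is an extension of $p^*T_U$ by $T$, and the relevant map is $\overline{T}\to p^*T_U$. By $\T$-equivariance its kernel on $L^{\T}$ is $T^{\mathrm{mov}}\oplus T^{\mathrm{fix}}$.

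With this in hand, your ``main subtlety'' paragraph is unnecessary. $p^*T_U$ is already a bundle, and the degree-zero maps in the middle and right columns are honest surjections of bundles. The kernel subcomplexes are therefore quasi-isomorphic to the cocones directly, with no need for cone representatives built from $F_{-1}\to F_0$ or for killing an acyclic piece. Replace that paragraph and the sentence about $\overline{T}$ surjecting onto $F_0$ with the exact sequence above, and the argument is complete.
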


The morphism $\overline{T}\to p^*T_U$ in Lemma \ref{repTL} also induces $\overline{T}^{\mathrm{mov}}\to p^*T_U^{\mathrm{mov}}|_{L^{\T}}\cong p^*N_{U^{\T}/U}|_{L^{\T}}[-1]$. Letting $N:=p^*N_{U^{\T}/U}|_{L^{\T}}$, we also have the following Corollary.

\begin{cor}\label{grep2} The exact triangle $T_{L^{\T}/U^{\T}}\to T_{L^{\T}/U} \to T_{U^{\T}/U}|_{L^{\T}}$ is represented by the following short exact sequence of complexes,
$$ 
\xymatrix@R=5mm@C=6.5mm{
0\ar[r]& T^{*\mathrm{mov}}\ar[r] & T^{*\mathrm{mov}} \ar[r] & 0 \ar[r] & 0\\
0\ar[r]& E^{\mathrm{mov}}\oplus T^{*\mathrm{mov}}\oplus T^{*\mathrm{fix}} \ar[u]\ar[r] & E^{\mathrm{mov}}\oplus T^{*\mathrm{mov}}\oplus T^{*\mathrm{fix}} \ar[u] \ar[r] & 0 \ar[u]\ar[r] & 0 \\
0\ar[r]& T^{\mathrm{mov}}\oplus E^{\mathrm{mov}}\oplus E^{\mathrm{fix}}  \ar[u]\ar[r] & \overline{T}^{\mathrm{mov}}\oplus E^{\mathrm{mov}}\oplus E^{\mathrm{fix}} \ar[u] \ar[r] & N \ar[u]\ar[r] & 0 \\
0\ar[r]& T^{\mathrm{mov}} \oplus T^{\mathrm{fix}} \ar[u] \ar[r] & T^{\mathrm{mov}} \oplus T^{\mathrm{fix}} \ar[u]\ar[r] & 0 \ar[u]\ar[r] & 0.
}
$$
\end{cor}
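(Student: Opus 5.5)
The plan is to build the short exact sequence directly from the middle column, which Corollary \ref{grep1} already identifies with $T_{L^{\T}/U}$. First I fix conventions. In the columns the bottom row sits in degree $0$, consistent with $T_{L/U}=\{T\to E\to T^*\}$ in degrees $0,1,2$. The right column is then $N$ placed in degree $1$, and it represents $T_{U^{\T}/U}|_{L^{\T}}\cong N[-1]$, where $N=p^*N_{U^{\T}/U}|_{L^{\T}}\cong p^*T_U^{\mathrm{mov}}|_{L^{\T}}$.

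For the right-hand map of complexes I take, in degree $1$, the moving part $\overline{T}^{\mathrm{mov}}\to N$ of the morphism $\overline T\to p^*T_U$ from Lemma \ref{repTL}, and zero in all other degrees. This is a chain map: the degree-$0$ term $T^{\mathrm{mov}}\oplus T^{\mathrm{fix}}$ maps into $\overline{T}^{\mathrm{mov}}$ only through $T\subset\overline T$, and $T$ dies in $p^*T_U$. The map also kills the image of the degree-$1$ differential into degree $2$, since the target $N$ has nothing in degree $2$.

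To see the degree-$1$ map is surjective with kernel $T^{\mathrm{mov}}$, I use the construction in the proof of Lemma \ref{repTL}. There $\overline T=\mathrm{coker}(F_{-1}\to F_0\oplus T)$, and $\{F_{-1}\to F_0\}$ is quasi-isomorphic to the bundle $p^*T_U$. After enlarging $F_\bullet$ if necessary, we may take $F_0\to p^*T_U$ surjective with kernel $F_{-1}$. This gives a short exact sequence $0\to T\to\overline T\to p^*T_U\to 0$. Taking moving parts (an exact functor on $\T$-equivariant bundles over $L^{\T}$) yields $0\to T^{\mathrm{mov}}\to\overline T^{\mathrm{mov}}\to N\to 0$. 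Hence the kernel column is exactly the left column of the statement, and the rows are exact.

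Next I identify the left column with $T_{L^{\T}/U^{\T}}$. It splits as the fixed part $\{T^{\mathrm{fix}}\to E^{\mathrm{fix}}\to T^{*\mathrm{fix}}\}$ plus $\mathrm{Cone}(\id)$ of $\{T^{\mathrm{mov}}\to E^{\mathrm{mov}}\to T^{*\mathrm{mov}}\}$, shifted; the latter summand is acyclic. So the left column is quasi-isomorphic to $T_{L/U}|_{L^{\T}}^{\mathrm{fix}}$, which the paper identified with $T_{L^{\T}/U^{\T}}$ before Theorem \ref{main4}.

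The main obstacle is checking that the resulting triangle is the canonical one, not merely a triangle with isomorphic terms. For this I would compare with the triangle $T_{L^{\T}/U}\to T_{L^{\T}}\to p_{\T}^*T_U$ and with its analogue for $U^{\T}$. By \eqref{bigrep}, $T_{L^{\T}}$ is represented with top-degree piece $\overline T^{\mathrm{mov}}\oplus\overline T^{\mathrm{fix}}$, mapping to $T_U|_{L^{\T}}=T_U^{\mathrm{fix}}\oplus T_U^{\mathrm{mov}}$ via Lemma \ref{repTL}. The octahedral axiom applied to $L^{\T}\to U^{\T}\to U$ then produces the triangle. Its maps are induced by these chain maps because all of them are restrictions of the single morphism $\overline T\to p^*T_U$, split into fixed and moving parts, together with the \cite{GP} identification of fixed parts.
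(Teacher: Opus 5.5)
Your proposal is correct and follows the paper's route. The paper obtains this corollary directly from the middle column of Corollary \ref{grep1}, together with the map $\overline{T}^{\mathrm{mov}}\to N$ induced by $\overline{T}\to p^*T_U$. Your checks (chain map, kernel via $0\to T\to\overline{T}\to p^*T_U\to 0$, splitting off the acyclic moving cocone) make explicit what the paper leaves implicit.

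The compatibility with the canonical triangle that you flag is settled by weights. On $L^{\T}$ there are no equivariant morphisms between different weight pieces. So the map $T_{L^{\T}/U}\to N[-1]$ is forced to be projection to the moving part followed by the inverse of the connecting isomorphism, and this is what your degree-$1$ map induces, up to sign.
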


\subsection*{One immediate consequence} The localisation theorem \cite[Theorem 3.3(a)]{EG:Lo} tells us that the pushforwards by zero sections  $L^{\T}\into C_{L^{\T}/L}$\vspace{-0.5mm} and $L^{\T}\into \overline{T}^{\mathrm{mov}}$, the moving part of $\overline{T}$ constructed in Lemma \ref{repTL}, induce the isomorphisms in $K$-groups because the zero sections are fixed loci. Corollay \ref{grep1} tells us that the cone $\iota: C_{L^{\T}/L}\into \overline{T}^{\mathrm{mov}}$ lies in $\overline{T}^{\mathrm{mov}}$. Hence the pushforward of $\iota_*$ also induces an isomorphism. So the commutativity of \eqref{localcomm2} is equivalent to that of
\beq{localcomm3}
\xymatrixcolsep{11pc}
\xymatrix{
K\(\MF^{\mathsf{T}}(U, f)\) \ar[r]^-{\iota_*\circ \mathfrak{sp}\circ L^{\T}_{(U,f)}} \ar[d]_-{i_U^*} & K_0^{\mathsf{T}}(\overline{T}^{\mathrm{mov}})_\mathrm{loc} \\
K\(\MF^{\mathsf{T}}(U^{\mathsf{T}}, f^{\mathsf{T}})) \ar[r]^-{\frac{L_{(U^{\T}, f^{\T})}}{\mathfrak{e}_{\T}(N_{U^{\T}/U})\cdot\sqrt{\mathfrak {e}_{\mathsf{T}}}(T_{L/U}^\mathrm{mov})}}  & K_0^{\mathsf{T}}\(L^{\mathsf{T}}\)_\mathrm{loc}. \ar[u]_-{0_{*}}
}
\eeq

\subsection{Deformations of normal cones}
The Lagrangian classes in Theorem \ref{main4} are defined via the normal cones $C_{L^{\T}/U^{\T}}$ and $C_{L/U}$. We explain that the commutativity of the diagram \eqref{localcomm2} ultimately arises from a sequence of relations between these two cones. We denote by a squiggly arrow a deformation over $\AA^1$ interpolating between the generic and the central fibres. Then the relations we consider are
\beq{sqarrows}
\xymatrix{
C_{L/U} \ar@{~>}[r] & C_{L^{\T}/ C_{L/U}} & C_{L^{\T}/U} \ar@{~>}[l] \ar@{~>}[r] & C_{L^{\T}/N_{U^{\T}/U}}&C_{L^{\T}/U^{\T}}. \ar@{_(->}[l]
}
\eeq
Now, we divide the diagram \eqref{localcomm3} into several pieces corresponding to the arrows in \eqref{sqarrows}. The first, second, third and fourth arrows in \eqref{sqarrows} give rise to the diagrams \eqref{1ststep}, \eqref{2ndstep}, \eqref{3rdstep} and \eqref{4thstep} below, respectively, which combine to yield \eqref{localcomm3}. 

\subsection*{First commutativity} Recall the first cone $C_{L/U}$ in \eqref{sqarrows} is used to define the specialisation $\mathrm{sp}$ in Section \ref{Sect:sp}. This cone deforms to the second one $C_{L^{\T}/ C_{L/U}}$ in \eqref{sqarrows} by the deformation $M_1:=M^o_{L^{\T}/C_{L/U}}$\vspace{-0.7mm} corresponding to the first arrow in \eqref{sqarrows}. Along this, we would like to replace $\mathrm{sp}$ with
$$
\mathrm{sp}_1:K\(\MF^{\T} (U, f)\) \To K\(\mf^{\T} (C_{L^{\T}/ C_{L/U}}, f/\lambda^2)\)
$$
using the second cone $C_{L^{\T}/ C_{L/U}}$ in \eqref{sqarrows}. Here, the function $f/\lambda^2$ on $C_{L^{\T}/ C_{L/U}}$ is just the pullback from $C_{L/U}$. Note that the base change of $M_1$ via $L\to C_{L/U}$ is the deformation $M^o_{L^{\T}/L}$\vspace{-0.7mm} defining $\mathfrak{sp}$. So conceptually, we may think of $\mathrm{sp}_1$ as a specialisation of specialisation, $\mathfrak{sp}(\mathrm{sp})$. Precisely, we would like to prove the commutativity of the diagram below
\beq{1ststep}
\xymatrix@C=0.8mm@R=3mm{
& K\(\mf^{\T} (C_{L/U}, f/\lambda^2)\) \ar[rrrrrrrrrr]^-{\cS^{\vir}_E\otimes\sqrt{\det T^*}\otimes \pi^*(-)} &&&&&&&&&& K_0^{\mathsf{T}}(L)_\mathrm{loc} \ar[dd]^-{\mathfrak{sp}}\\
K\(\MF^{\T}(U,f)\) \ar[ru]_-{\mathrm{sp}}\ar[rd]^-{\mathrm{sp}_1} \\
&K\(\mf ^{\T} (C_{L^{\T}/ C_{L/U}}, f/\lambda^2)\) \ar[rrrrrrrrrr]^-{\cS^{\vir}_E\otimes\sqrt{\det T^*}\otimes \pi^*(-)} &&&&&&&&&&K_0^{\mathsf{T}}(C_{L^{\T}/L})_\mathrm{loc}.
}
\eeq
We explain the notations with $\T$-equivariant normal form $\{T\to E\to T^*\}$ of $T_{L/U}$. We denote by $\pi$ both the projection map from the Behrend-Fantechi cone $C\subset E$ to $C_{L/U}\cong [C/T]$\vspace{-0.5mm} and its base change $C_{L^{\T}/C}\to C_{L^{\T}/ C_{L/U}}$ by abusing notation. The twisted Clifford factorisation $\surd{\det T^*}\otimes \cS^{\vir}_E$ on $C_{L^{\T}/C}$ is also defined as the pullback from $C$, hence its support lies in $C_{L^{\T}/L}$.

The upper horizontal in \eqref{1ststep} is $L^{\T}_{(U,f)}$. Hence \eqref{1ststep} suggests a replacement of the upper horizontal of \eqref{localcomm2}. By composing with the pushforward $\iota_*:K_0^{\T}(C_{L^{\T}/L})_{\loc}\to K_0^{\T}(\overline{T}^{\mathrm{mov}})_{\loc}$ in the end, it suggests a replacement of the upper horizontal of \eqref{localcomm3}.

We now explain the construction of $\mathrm{sp}_1$. For $\Fd\in \MF^{\T} (U,f)$, inspired by the construction of $\mathfrak{sp}$, we define $\mathrm{sp}_1(\Fd)$ to be the pullback of $\mathrm{sp}(\Fd)$ if it is locally free. However, since we cannot define a pullback of a coherent factorisation in general, we use the cover $\rho:C_{\Fd}\to C_{L/U}$\vspace{-0.5mm} on which we have the locally free tautological factorisation $\xi_\bullet \in \MF^{\T}(C_{\Fd},f/\lambda^2)$. Note that we have $\rho_*\xi_\bullet= \mathrm{sp}(\Fd)$ \eqref{defsp}. We denote by $\rho^*(-)$ the base change of a space $(-)$ by $\rho$ so that the pullback tautological factorisation $\xi_\bullet$ is defined on $\rho^*(-)$. Inspired by the definition of $\mathrm{sp}(\Fd)$, we define
$$
\mathrm{sp}_1(\Fd):= \rho_*\(\xi_\bullet|_{\rho^*C_{L^{\T}/ C_{L/U}}}\otimes 0^!\cO_{\rho^*M_1}\) \in K\(\mf ^{\T} (C_{L^{\T}/ C_{L/U}}, f/\lambda^2)\).
$$ 
Note that $0^!\cO_{\rho^*M_1}$ equals the restriction to zero of the structure sheaf of the closure of $\rho^*M_1|_{\AA^1-\{0\}}\subset \rho^*M_1$. For the proof of the commutativity of \eqref{1ststep}, we investigate the image of the tautological factorisation $\xi_\bullet$ on $\rho^*M_1$ along the following homomorphism
\begin{align}\label{hahaKKK}
K\(\MF^{\T}(\rho^*M_{1},f/\lambda^2)\) \rt{h\(\rho_*\(\cS^{\vir}_E\otimes\sqrt{\det T^*}\otimes \pi^*(-)\)\)}  K_0^{\T}\(M^o_{L^{\T}/L}\)_{\loc},
\end{align}
where $h(-)$ denotes taking cohomology. Note that $\pi$ denotes the base-change of $\pi:C\to C_{L/U}\cong [C/T]$, hence it is 
$$
\pi:\rho^*M^o_{L^{\T}/C}\to \rho^*M_{1}\cong \rho^*M^o_{L^{\T}/C_{L/U}}.
$$ 
The support of $\cS^{\vir}_E$ is $\rho^*M^o_{L^{\T}/L}$ which is the base-change of $L\into C$. We can check that the homomorphism \eqref{hahaKKK} commutes with the Gysin pullbacks to fibres
\beq{commcommcomm1}
\xymatrix@C=40mm{
K\(\MF^{\T}(\rho^*M_{1},f/\lambda^2)\) \ar[r]^-{h\(\rho_*\(\cS^{\vir}_E\otimes\sqrt{\det T^*}\otimes \pi^*(-)\)\)} \ar[d]_-{\lambda^!} &K_0^{\T}\(M^o_{L^{\T}/L}\)_{\loc} \ar[d]^-{\lambda^!},\\
K\(\MF^{\T}(\rho^*M_{1}|_{\lambda},f/\lambda^2)\) \ar[r]^-{h\(\rho_*\(\cS^{\vir}_E\otimes\sqrt{\det T^*}\otimes \pi^*(-)\)\)} & K_0^{\T}\(M^o_{L^{\T}/L}|_{\lambda}\)_{\loc},
}
\eeq
by the commutativity of $h$ and $\rho_*$ proved in Lemma \ref{OS220} below, that of $\lambda^!$ and $\rho_*$ \cite[Theorem 2.6]{Qu}, that of $\lambda^!$ and $h$ \cite[Lemma 2.15]{OS}, Footnote \ref{OS215}, the property
$$
\lambda^!(E_\bullet\otimes G)=\lambda^*E_\bullet \otimes \lambda^!G
$$
for a locally-free $2$-periodic complex $E_\bullet$ and a coherent sheaf $G$ \cite[Lemma 2.18]{OS} and the commutativity of $\lambda^!$ and $\pi^*$ \cite[Theorem 2.6]{Qu}. Using these, we would like to check that the image of $\xi_\bullet$ is $L^{\T}_{(U,f)}(\Fd)$ at a generic fibre and $\cS^{\vir}_E\otimes\sqrt{\det T^*}\otimes \pi^*(\mathrm{sp}_1(\Fd))$ at the central fibre. Then by definition of $\mathfrak{sp}$, we obtain
$$
\mathfrak{sp}\(L^{\T}_{(U,f)}(\Fd)\)=\cS^{\vir}_E\otimes\sqrt{\det T^*}\otimes \pi^*(\mathrm{sp}_1(\Fd)),
$$
proving the commutativity of \eqref{1ststep}. Indeed, the image at each fibre is
\beq{AAAAA}
h\(\rho_*\(\rho^*(\cS^{\vir}_E\otimes \sqrt{\det T^*})\otimes \pi^*(\xi_\bullet\otimes \lambda^!\cO_{\rho^*M_1})\)\).
\eeq
By the projection formula \cite[Proposition 2.2.10]{CFGKS}, it becomes
$$
h\(\cS^{\vir}_E\otimes\sqrt{\det T^*}\otimes \rho_*\(\pi^*(\xi_\bullet\otimes \lambda^!\cO_{\rho^*M_1})\)\).
$$
Since we have $\rho_*\pi^*=\pi^*\rho_*$ by the smooth base-change theorem \cite[Proposition 5.9]{BDFIK}, the image \eqref{AAAAA} is $L^{\T}_{(U,f)}(\Fd)$ at a generic fibre and $\cS^{\vir}_E\otimes\sqrt{\det T^*}\otimes \pi^*(\mathrm{sp}_1(\Fd))$ at the central fibre.

\subsection*{Commutativity of $h$ and $\rho_*$} We cannot use \cite[Lemma 2.20]{OS} directly for this commutativity unfortunately, but we prove its analogue.

\begin{lem}\label{OS220}
Let $g:Y\to X$ be a proper morphism between stacks and let $Z\subset X$ be a closed substack. Let $G_\bullet \in \mf(Y,0)$ be a coherent $2$-periodic complex on $Y$, absolutely acyclic off $Z\times_X Y$. Then we have
$$
g_*h(G_\bullet)=h(g_*G_\bullet)\ \in\ K_0(Z).
$$
\end{lem}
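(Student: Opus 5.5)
The plan is to reduce to the classical statement for coherent sheaves (pushforward along a proper map commutes with Euler characteristic of $K$-classes supported on $Z$) by replacing $G_\bullet$ with a resolution whose cohomology is computed on $g^{-1}Z$. Recall that for a $2$-periodic complex $G_\bullet\in\mf(Y,0)$ absolutely acyclic off $W:=Z\times_XY$, the class $h(G_\bullet)=[h^+(G_\bullet)]-[h^-(G_\bullet)]$ lies in $K_0(W)$; as in \cite{OS} this class depends only on the image of $G_\bullet$ in $D^{\mathrm{abs}}\mf(Y,0)_W$ and is additive in exact triangles. Absolutely acyclic objects have vanishing class, because totalisations of short exact sequences have cohomology fitting in a long exact hexagon with alternating sum zero. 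So $h$ descends to a homomorphism $K(\mf(Y,0)_W)\to K_0(W)$. The same holds on $X$, and since $g$ is proper and preserves absolute acyclicity, $g_*$ maps $\mf(Y,0)_W$ to $\mf(X,0)_Z$. Both sides of the claimed identity are therefore homomorphisms $K(\mf(Y,0)_W)\to K_0(Z)$, and it suffices to check them on generators.

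The first step is to show that $K(\mf(Y,0)_W)$ is generated by classes of coherent sheaves $\mathcal F$ set-theoretically supported on $W$, viewed as $2$-periodic complexes $\mathcal F\rightleftarrows 0$ with zero differentials. I would argue this by a filtration of $G_\bullet$. Let $\mathcal I$ be an ideal of $W$. The subfactorisations $\mathcal I^kG_\bullet$ are well defined since the differentials are $\cO_Y$-linear.
\begin{itemize}
\item Because $G_\bullet$ is acyclic off $W$, each $h^\pm(G_\bullet)$ is killed by a power of $\mathcal I$.
\item In the absolute derived category one can then replace $G_\bullet$ by $G_\bullet/\mathcal I^NG_\bullet$ for $N\gg0$. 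Here we use quasi-compactness and Artin--Rees to show that $\mathcal I^NG_\bullet$ is absolutely acyclic.
\item The quotient $G_\bullet/\mathcal I^NG_\bullet$ has a finite filtration with graded pieces $\mathcal I^kG_\bullet/\mathcal I^{k+1}G_\bullet$, which are periodic complexes of coherent sheaves on the scheme-theoretic thickenings of $W$.
\item A $2$-periodic complex of coherent sheaves has the same class as $h^+\oplus h^-[1]$, by the standard truncation exact sequences $0\to\ker\to G_\pm\to\mathrm{im}\to0$, which realise it as an iterated extension of its cohomology.
\end{itemize}
For the generators $\mathcal F$, the identity is immediate from the definitions. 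Here $g_*\mathcal F$ is the Čech pushforward, a $2$-periodisation of $Rg_*\mathcal F$, so $h(g_*\mathcal F)=\sum(-1)^i[R^ig_*\mathcal F]$. This is exactly the proper pushforward $g_*[\mathcal F]$ in $G$-theory of $Z$, which equals $g_*h(\mathcal F)$.

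The main obstacle is the first step: proving that $\mathcal I^NG_\bullet$, or equivalently the difference between $G_\bullet$ and its truncation, is absolutely acyclic rather than merely acyclic. Coherent $2$-periodic complexes with zero potential can be acyclic without being absolutely acyclic, so cohomological vanishing alone does not suffice. I would attempt this first by using Lemma \ref{lem: coeqcon}-style local contractibility on $Y\setminus W$ together with the stability of absolute acyclicity under the Čech resolutions. If that fails, I would instead define $h$ on $K(\mf(Y,0)_W)$ directly via \cite[Lemma 2.15]{OS}-type arguments, working in $\mathrm{Qcoh}$ so that the needed quotients are available.
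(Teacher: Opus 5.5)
\paragraph{The reduction step fails.} Your claim that $\mathcal I^N G_\bullet$ is absolutely acyclic for $N\gg0$ is false. So is the weaker claim that $G_\bullet$ and $G_\bullet/\mathcal I^NG_\bullet$ have the same class $h$.

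Take $Y=\AA^1=\Spec\C[x]$, $W=\{0\}$, $\mathcal I=(x)$, and $G_+=G_-=\cO$ with $d_+=x$ and $d_-=0$.
\begin{itemize}
\item Off $0$ this is contractible, with homotopy $x^{-1}\colon G_-\to G_+$. Its cohomology gives $h(G_\bullet)=-[\cO_0]$.
\item Multiplication by $x^N$ gives an isomorphism $G_\bullet\cong\mathcal I^NG_\bullet$. So $\mathcal I^NG_\bullet$ is not even acyclic, let alone absolutely acyclic.
\item The quotient $G_\bullet/\mathcal I^NG_\bullet$ has both terms $\cO/x^N$, with $d_+=x$ and $d_-=0$. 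Hence $h^+\cong h^-\cong\C$, so its class is $0\neq h(G_\bullet)$.
\end{itemize}
Artin--Rees cannot help here. Being absolutely acyclic off $W$ says nothing about the terms $G_\pm$ being $\mathcal I$-torsion.

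Your kernel/image filtration does not rescue this either. Applied to $G_\bullet$ itself, it only proves the identity in $K_0(X)$, because $\ker d$ and $\mathrm{im}\,d$ are not supported on $W$. The map $K_0(Z)\to K_0(X)$ need not be injective. So the refinement to $K_0(Z)$, which is the actual content of the lemma, is exactly what your generation step was meant to supply, and it remains unproved.

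\paragraph{What is missing.} You need a support-preserving way to pass from $G_\bullet$ to its cohomology. The paper uses the deformation of \cite[Construction 2.16]{OS}: a factorisation $\cG_\bullet\in\mf(Y\times\AA^1,0)$ with the following properties.
\begin{itemize}
\item It is flat over $\AA^1$, with underlying modules constant in $t$.
\item It restricts to $G_\bullet$ at $t=0$, and to $h^+(G_\bullet)\oplus h^-(G_\bullet)[1]$ with zero differentials at $t\neq0$.
\end{itemize}
Constancy of the modules makes $g_*\cG_\bullet$ flat over $\AA^1$, with $g_*$ commuting with restriction to fibres. Then \cite[Lemma 2.15]{OS} says that cohomology commutes with Gysin restriction to fibres. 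Together with $\AA^1$-homotopy invariance of $K_0(Z\times\AA^1)$, this gives $h(g_*G_\bullet)=h\big(g_*h(G_\bullet)\big)$.

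The last identification, $h\big(g_*h(G_\bullet)\big)=g_*h(G_\bullet)$, is exactly your computation on generators, so your endpoint is right. Your fallback of ``\cite[Lemma 2.15]{OS}-type arguments'' is in effect the paper's route, but the proposal as written does not contain it.
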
 
\begin{proof} Following \cite[Construction 2.16]{OS}, we can construct a factorisation $\cG_\bullet \in \mf(Y\times \AA^1,0)$ which is flat over $\AA^1$, and the restrictions to the central and generic fibres are isomorphic to $G_\bullet$ and $2$-periodisation of $h(G_\bullet)$, respectively. It is actually the cocone of the morphism, denoted by $f_\bullet$, introduced in between \cite[Eq.(2.13)]{OS} and \cite[Eq.(2.14)]{OS}. From this, we observe that the underlying modules are constant over $\AA^1$. Hence the pushforward $g_*\cG_\bullet \in \mf (X\times \AA^1,0)$ is flat over $\AA^1$, and its restriction to a fibre is the pushforward of the restriction. By applying \cite[Lemma 2.15]{OS}, we know its restriction of the cohomology is the same as the cohomology of the restriction as $K$-group elements, $h(g_*\cG_\bullet)|_t=h((g_*\cG_\bullet)|_t)$, and as discussed above, it is $h(g_*(\cG_\bullet|_t))$. By the $\AA^1$-homotopy invariance of $K$-groups of coherent sheaves, we know $h(g_*\cG_\bullet)|_t$ is independent of $t$. Hence we have
\begin{align*}
h(g_*G_\bullet)=h(g_*\cG_\bullet|_1)&=h(g_*\cG_\bullet)|_1\\
&= h(g_*\cG_\bullet)|_0=h(g_*\cG_\bullet|_0)= h(h(g_*G_\bullet)) = h(g_*G_\bullet).  
\end{align*}
\end{proof}

\subsection*{Second commutativity}
While the first arrow in \eqref{sqarrows} is a well-known deformation, the deformation to the normal cone, the second one from $C_{L^{\T}/U}$ to $C_{L^{\T}/C_{L/U}}$ could be rather unfamiliar. In \cite{KKP}, it is proved that there is a space $M_2$ over $\AA^1$ lying inside the cone stack of the mapping cone of 
$$
T_{L^{\T}/L}\rt{\lambda\cdot\id \oplus i} T_{L^{\T}/L}\oplus T_{L^{\T}/U},
$$
where $i$ denotes the natural map $T_{L^{\T}/L}\to T_{L^{\T}/U}$ and $\lambda$ is the coordinate on $\AA^1$, such that
$$
\lambda^!\cO_{M_2}=\left\{
\begin{array}{cl}
\cO_{C_{L^{\T}/U}} & \text{if }\lambda\neq 0, \\
\cO_{C_{L^{\T}/C_{L/U}}} &\text{if }\lambda=0,
\end{array}
\right.
$$
in the $K$-group of the cone stack, where $\lambda^!$ denotes the Gysin pullback to the fibre $\lambda\in \AA^1$. Moreover, $M_2$ can be taken to satisfy $M_2|_{\lambda}\cong C_{L^{\T}/U}$ at $\lambda\neq 0$ as spaces, not only as $K$-group elements. By Corollary \ref{grep1}, this cone stack lies inside the cokernel of the following morphism of bundle stacks
$$
\overline{T}^{\mathrm{mov}}\rt{\lambda\cdot\id \oplus i} \overline{T}^{\mathrm{mov}}\oplus \left[\frac{ \overline{T}^{\mathrm{mov}} \oplus E^{\mathrm{mov}}\oplus E^{\mathrm{fix}} }{T^{\mathrm{mov}}\oplus T^{\mathrm{fix}}}\right],
$$
where $i$ is the morphism induced by the unique horizontal morphism outward from $\overline{T}^{\mathrm{mov}}$ in the diagram in Corollary \ref{grep1}, by abusing notation. Indeed, this $i$ is $\id\oplus 0\oplus 0$. The functoriality of the construction of \cite{KKP} tells us that there is a morphism from $M_2$ to the deformation constructed by the triple $L\rt{=} L\to U$, which is $C_{L/U}\times \AA^1$. Note that our $M_2$ is constructed by the triple $L^{\T}\to L\to U$. This deformation $C_{L/U}\times \AA^1$ lies in
$$
[E/T]\times \AA^1\cong \mathrm{coker}\left(0 \rt{\lambda\cdot\id \oplus i} 0\oplus \left[\frac{ 0 \oplus E}{T}\right] \right).
$$
and this tells us how $M_2$ factors through $C_{L/U}\times \AA^1$ via the morphism between bundle stacks. Hence, the function $f/\lambda^2$ is defined on $M_2$. The bundle stack has a smooth cover, the cokernel of
$$
\overline{T}^{\mathrm{mov}}\rt{\lambda\cdot\id \oplus i} \overline{T}^{\mathrm{mov}}\oplus \overline{T}^{\mathrm{mov}} \oplus E^{\mathrm{mov}}\oplus E^{\mathrm{fix}} ,
$$
which is the trivial bundle $(\overline{T}^{\mathrm{mov}} \oplus E^{\mathrm{mov}}\oplus E^{\mathrm{fix}} )\times \AA^1$ over $\AA^1$. Since this cover is the base-change of $\pi:E\to [E/T]$ at the central fibre $\lambda=0$, we also denote it by $\pi$ by abusing notation. Then on the pullback
$$
\pi^*M_2\subset \(\overline{T}^{\mathrm{mov}} \oplus E^{\mathrm{mov}}\oplus E^{\mathrm{fix}} \)\times \AA^1,
$$
the quadratic function $q$ is defined via $\pi^*M_2\to \overline{T}^{\mathrm{mov}} \oplus E^{\mathrm{mov}}\oplus E^{\mathrm{fix}}\to E$. The function $f/\lambda^2$ and $q$ are the same because they are the pullbacks from $C=\pi^*C_{L/U}\subset E$ on which they are matched. Along the map $\pi^*M_2\to E$, we can define the pullback Clifford factorisation $\cS^{\vir}_E$ on $\pi^*M_2$ with the support inside $\overline{T}^{\mathrm{mov}}$.

The function $f$ also lies in $I_{L^{\T}/U}^2$ since it is in $I_{L/U}^2$\vspace{-0.5mm} by Lemma \ref{Lem:fq}. Hence we obtained the cover $\mu:C'_{\Fd}\to C_{L^{\T}/U}$ \eqref{MFd} with the tautological factorisation $\xi'_\bullet$. We avoid to use the notation $\rho$ here to keep it for the cover $\rho:C_{\Fd}\to C_{L/U}$ with $\xi_\bullet$. Using the cover $\mu$, we define the specialisation \eqref{defsp} as in Section \ref{Sect:sp},
$$
\mathrm{sp}_2(\Fd)= \mu_*\(\xi'_\bullet\) \in K\(\mf ^{\T} (C_{L^{\T}/ U}, f/\lambda^2)\).
$$
Then the morphisms in the diagram below are all well-defined, and we would like to prove its commutativity.
\beq{2ndstep} 
\xymatrix@C=0.7mm@R=3mm{
& K\(\mf ^{\T} (C_{L^{\T}/ C_{L/U}}, f/\lambda^2)\) \ar[drrrrrrrrrr]^-{\ \ \ \ \ \quad\cS^{\vir}_E\otimes\sqrt{\det T^*}\otimes \pi^*(-)}  \\
K\(\MF^{\T}(U,f)\) \ar[ru]_-{\,\, \mathrm{sp}_1}\ar[rd]^-{\mathrm{sp}_2} &&&&&&&&&&&K_0^{\mathsf{T}}\(\overline{T}^{\mathrm{mov}}\)_\mathrm{loc}. \\
&K\(\mf ^{\T} (C_{L^{\T}/ U}, f/\lambda^2)\) \ar[urrrrrrrrrr]_-{\ \ \ \ \ \ \quad\cS^{\vir}_E\otimes\sqrt{\det T^*}\otimes \pi^*(-)} 
}
\eeq
The proof is similar to that of the commutativity of \eqref{1ststep}, but we do not use the canonical cover $\mu^*M_2$ which is the base-change via \mbox{$M_2 \to C_{L^{\T}/U}\times \AA^1$}\vspace{-0.3mm} by viewing the latter as the deformation of \cite{KKP} constructed by the triple $L^{\T}\to U\rt{=} U$. The problem of using it is then $\xi'_\bullet$ need not be a factorisation of $f/\lambda^2$ there. To define a cover $\overline{M}_2\to M_2$ we want to use, we consider a `double' deformation
$$
D\ :=\ M^o_{L^{\T}\times \AA^1/M^o_{L/U}}\To \AA^1_{\nu}\ \ \text{from}\ \ M^o_{L/U}\To \AA^1_{\lambda}
$$
to the normal cone. Note that our deformation $M_2$ lies in the restriction $D|_{\nu=0}$; and they are the same at $\lambda\neq 0$. Also we have a map
$$
D \To D':=M^o_{L\times \AA^1/M^o_{L/U}}
$$
inducing the map $M_2\to D|_{\nu=0}\to D'|_{\nu=0}\cong C_{L/U}\times \AA^1$. Then except at $\lambda=0$, the cover $\rho^*(D|_{\nu=0})$ contains $\mu^*M_2$ over $\lambda\neq 0$ by its construction as the closure; and $\xi_\bullet$ restricts to $\xi'_\bullet$ along the embedding. So we take 
$$
\overline{M}_2:=\text{closure of }\mu^*M_2\text{ inside }\rho^*(D|_{\nu=0})\ \ \text{and}\ \ \xi'_\bullet:=\text{rest. of }\xi_\bullet.
$$
Then we compute the Gysin pullbacks to fibres of
\beq{BBBBB}
h\(\mu_*\(\cS^{\vir}_E\otimes \sqrt{\det T^*}\otimes \pi^*(\xi'_\bullet\otimes \cO_{\overline{M}_2})\)\).
\eeq
The parallel argument with the proof of the commutativity \eqref{commcommcomm1} between \eqref{hahaKKK} and the Gysin pullbacks to fibres shows the following equality,
$$
\lambda^!\eqref{BBBBB}=h\(\mu_*\(\cS^{\vir}_E\otimes \sqrt{\det T^*}\otimes \pi^*(\xi'_\bullet\otimes \lambda^! \cO_{\overline{M}_2})\)\).
$$
Then by the projection formula \cite[Proposition 2.2.10]{CFGKS} and the commutativity $\mu_*\pi^*=\pi^*\mu_*$ \cite[Proposition 5.9]{BDFIK}, the equation becomes
$$
h\(\cS^{\vir}_E\otimes \sqrt{\det T^*}\otimes \pi^*\mu_*(\xi'_\bullet\otimes \lambda^!\cO_{\overline{M}_2})\).
$$
At a generic fibre, it is
$$
\cS^{\vir}_E\otimes \sqrt{\det T^*}\otimes \pi^*\mathrm{sp}_2(\Fd),
$$
which is the same as the Gysin pullback to the central fibre by the $\AA^1$-homotopy invariance of the $K$-groups of coherent sheaves. So it remains to show that the pullback to the central fibre is
\beq{BBBBB1}
\cS^{\vir}_E\otimes \sqrt{\det T^*}\otimes \pi^*\mathrm{sp}_1(\Fd),
\eeq
which proves the commutativity of \eqref{2ndstep}. We claim that
\beq{Lem620}
\xi_\bullet\otimes 0^!\cO_{\rho^*M_1}\=\xi'_\bullet\otimes 0^!\cO_{\overline{M}_2}.
\eeq
We prove this claim in an appropriate $K$-group. Once this claim holds true, their pushdowns to $D|_{(0,0)}$ pull back by $\pi^*$ to become the equality $\lambda^!\eqref{BBBBB}=\eqref{BBBBB1}$ in $K_0^{\T}(\overline{T}^{\mathrm{mov}})_{\loc}$. So we also need to explain that there exists a proper pushdown.

The key idea is to find a space $\overline{D}$ over $\AA^1_\lambda\times \AA^1_\nu$ mapping to $D$ containing both $\rho^*M_1$ and $\overline{M}_2$. To define it we first consider the map $D\to M^o_{L/U}\times \AA^1_\nu$ obtained by the fact that $D$ is the deformation to the normal cone from $M^o_{L/U}$. Using $\rho: M^o_{\Fd}\to M^o_{L/U}$, we take $\rho^*D$ which coincides with 
$$
\rho^*D\cong
\left\{
\begin{array}{cl}
D & \text{at } \lambda\neq 0, \\
M^o_{\Fd}\times \AA^1_{\nu} & \text{at }\nu \neq 0.
\end{array}
\right.
$$ 
It is because $\rho$ is $\id$ at $\lambda\neq 0$ and $D\to M^o_{L/U}\times \AA^1_\nu$\vspace{-0.7mm} is $\id$ at $\nu\neq 0$. We also consider a different map $D\to M^o_{L\times \AA_\lambda^1/U\times \AA^1_\lambda}\cong M^o_{L/U}\times \AA^1_\lambda$\vspace{-0.5mm} constructed by using the functorial property of deformations. Composing it with $\rho^*D\to D$ gives $\rho^*D\to M^o_{L/U}\times \AA^1_{\lambda}$. Pulling $\rho^*D$ back via $\rho$ again, we have
$$
\rho^*(\rho^*D)\ \cong\
\left\{
\begin{array}{cl}
\rho^*D & \text{at } \lambda\neq 0, \\
M^o_{\Fd}\times \AA^1_{\nu} & \text{at }\nu \neq 0,
\end{array}
\right.
$$
because $D\to M^o_{L/U}\times \AA^1_{\lambda}$\vspace{-0.5mm} is $\id$ at $\lambda\neq 0$ and $\rho$ is $\id$ at $\nu\neq 0$ this time. Then take $\overline{D}$ to be the closure of $\rho^*(\rho^*D)|_{\lambda\neq 0, \nu\neq 0}$ inside $\rho^*(\rho^*D)$.

Then we prove \eqref{Lem620} in the $K$-group $K\(\mf^{\T}(\overline{D}|_{(0,0)}, f/\lambda^2)\)$. The fibre $\overline{D}|_{(0,0)}$ projects to $D|_{(0,0)}$. Indeed, this $\overline{D}$ contains both $\rho^*M_1$ and $\overline{M}_2$ at $\lambda=0$ and $\nu=0$, respectively, and they are matched at $\nu\neq 0$ and $\lambda\neq 0$, respectively. On the fibre $\overline{D}|_{(0,0)}$, we have the tautological factorisation $\xi_\bullet$. Note that the two factorisations at $(0,0)$ defined by using $\rho_\lambda$ and $\rho_\nu$ are isomorphic. It restricts to $\xi_\bullet$ on $\rho^*M_1|_{\nu=0}$ and $\xi'_\bullet$ on $\overline{M}_2|_{\lambda=0}$. Hence, it is sufficient to prove that $0^!\cO_{\rho^*M_1}=0^!\cO_{\overline{M}_2} \in K_0^{\T}(\overline{D}|_{(0,0)})$. This holds true, 
$$
0_\nu^!\cO_{\rho^*M_1}=0_\nu^!0_{\lambda}^!\cO_{\overline{D}}=0_{\lambda}^!0_\nu^!\cO_{\overline{D}}=0^!_{\lambda}\cO_{\overline{M}_2}.
$$
The first equality holds because $\rho^*M_1$ coincides with $\overline{D}|_{\lambda=0}$ at $\nu\neq 0$. Similarly, the third equality holds because $\overline{M}_2$ coincides with $\overline{D}|_{\nu=0}$ at $\lambda\neq 0$.

\subsection*{Third commutativity} To simplify the notation, we denote by $N=N_{U^{\T}/U}$ the normal bundle. Since the cone $C_{L^{\T}/N}$ is isomorphic to $C_{L^{\T}/U^{\T}}\oplus N|_{L^{\T}}$, we can define
\beq{sp3}
\mathrm{sp_3}:K\(\MF^{\T}(U,f)\)\To K\(\mf^{\T}(C_{L^{\T}/U},f/\lambda^2)\), \ \mathrm{sp}_3(\Fd):= \mathrm{pr}_N^*\mathrm{sp}_{\T}(i_U^*\Fd)
\eeq
using the smooth projection map $\mathrm{pr}_N:C_{L^{\T}/N}\to C_{L^{\T}/U^{\T}}$. In case there is no confusion, we abuse the notation $N$ for the restriction $N|_{L^{\T}}$ so that we write the normal cone of $T^{\mathrm{mov}}$ into $\overline{T}^{\mathrm{mov}}$ as a direct sum $T^{\mathrm{mov}}\oplus N$ with the simple notation. Then we would like to prove the commutativity of
\beq{3rdstep}
\xymatrix@C=0.6mm@R=3mm{
& K\(\mf ^{\T} (C_{L^{\T}/ U}, f/\lambda^2)\) \ar[rrrrrrrrrr]^-{\cS^{\vir}_E\otimes\sqrt{\det T^*}\otimes \pi^*(-)} &&&&&&&&&&K_0^{\mathsf{T}}(\overline{T}^{\mathrm{mov}})_\mathrm{loc} \ar[dd]^{\cong}_-{\mathfrak{sp}}\\
K\(\MF^{\T}(U,f)\) \ar[ru]_-{\mathrm{sp}_2}\ar[rd]^-{\mathrm{sp}_3} \\
&K\(\mf ^{\T} (C_{L^{\T}/ N}, f/\lambda^2)\) \ar[rrrrrrrrrr]^-{\cS^{\vir}_E\otimes\sqrt{\det T^*}\otimes \pi^*(-)} &&&&&&&&&&K_0^{\mathsf{T}}(T^{\mathrm{mov}}\oplus N)_\mathrm{loc} .
}
\eeq
For the proof, we use the third deformation $M_3$ which is again constructed by the method of \cite{KKP}, from $C_{L^{\T}/U}$ to $C_{L^{\T}/N}$ corresponding to the third arrow in \eqref{sqarrows}. It lies in the cone stack of the cocone of
$$
T_{L^{\T}/U}\oplus T_{U^{\T}/U}\rt{j-\lambda\id} T_{U^{\T}/U},
$$
where $j:T_{L^{\T}/U}\to T_{U^{\T}/U}$ is the natural morphism from $L^{\T}\to U^{\T}\to U$. By Corollary \ref{grep2}, this cone stack lies inside the kernel of the following morphism of bundle stacks
$$
\left[\frac{ \overline{T}^{\mathrm{mov}} \oplus E^{\mathrm{mov}}\oplus E^{\mathrm{fix}} }{T^{\mathrm{mov}}\oplus T^{\mathrm{fix}}}\right]\oplus N\rt{j-\lambda\id} N,
$$
where the morphism $j$ is induced by the unique horizontal morphism toward to $N$ in the diagram in Corollary \ref{grep2} by abusing notation. This $j$ is induced by $\overline{T}^{\mathrm{mov}}\to N$ defined in Lemma \ref{repTL}. Again, the functoriality of the construction of \cite{KKP} tells us that there is a morphism from $M_3$ to the deformation constructed by the triple $L\to U\to U$, which is $C_{L/U}\times \AA^1$. It lies in
$$
[E/T]\times \AA^1\ \cong\ \ker\left(\left[\frac{0\oplus E}{T}\right]\oplus 0\rt{j-\lambda\id} 0 \right).
$$
It tells us how $M_3$ factors through $C_{L/U}\times \AA^1$ via the morphism between bundle stacks. Hence, the function $f/\lambda^2$ is defined on $M_3$ and this function is matched with the one on $M_2$ at generic fibres. The bundle stack has a smooth cover, the kernel of
$$
\overline{T}^{\mathrm{mov}} \oplus E^{\mathrm{mov}}\oplus E^{\mathrm{fix}}\oplus N\rt{j-\lambda\id} N,
$$
which is isomorphic to
$$
\left\{
\begin{array}{cl}
\overline{T}^{\mathrm{mov}} \oplus E^{\mathrm{mov}}\oplus E^{\mathrm{fix}} & \text{if } \lambda\neq 0, \\
T^{\mathrm{mov}} \oplus E^{\mathrm{mov}}\oplus E^{\mathrm{fix}} \oplus N & \text{if } \lambda=0.
\end{array}
\right.
$$
Again we denote this cover by $\pi$ by abusing notation since it is the base-change of $\pi:E\to [E/T]$ at the central fibre. Then we have a morphism $\pi^*M_3\subset\text{the bundle stack}\to E$ under which we define the quadratic function $q$ and the Clifford factorisation $\cS^{\vir}_E$ as pullbacks. The functions $f/\lambda^2$ and $q$ on $\pi^*M_3$ are the same because they are pullbacks from $C\subset E$. The support of $\cS^{\vir}_E$ lies in $\ker(\overline{T}^{\mathrm{mov}}\oplus N\to N)$ which is the deformation to the normal cone of $T^{\mathrm{mov}}$ into $\overline{T}^{\mathrm{mov}}$. 

A crucial difference of $M_3$ from $M_2$ is that the map $M_3\to C_{L/U}\times \AA^1$ factors through $C_{L^{\T}/U}\times \AA^1$, so that we can use the base-change $\mu^*M_3$ by $\mu:C'_{\Fd}\to C_{L^{\T}/U}$. To prove the commutativity of \eqref{3rdstep}, we compute the Gysin pullbacks to fibres of
\beq{CCCCC}
h\(\mu_*\(\cS^{\vir}_E\otimes \sqrt{\det T^*}\otimes \pi^*(\xi'_\bullet\otimes \cO_{\mu^*M_3})\)\),
\eeq
which is a $K$-group element inside the deformation from $\overline{T}^{\mathrm{mov}}$ to $T^{\mathrm{mov}}\oplus N$. The parallel argument with the proof of the commutativity \eqref{commcommcomm1} shows that its Gysin pullback to the generic fibre is $h(\cS^{\vir}_E\otimes \sqrt{\det T^*}\otimes \pi^*(\mathrm{sp}_2(\Fd)))$. It remains to show that its pullback to the central fibre is 
\beq{CCCCC1}
h(\cS^{\vir}_E\otimes \sqrt{\det T^*}\otimes \pi^*(\mathrm{sp}_3(\Fd))).
\eeq 
That requires to show that $\mu_*(\xi'_\bullet \otimes 0^!\cO_{\mu^*M_3})=\mathrm{sp}_3(\Fd)$ which is induced by
\begin{equation*}
\label{CCCCC2}
\xi'_\bullet \otimes 0^!\cO_{\mu^*M_3}=\mathrm{pr}_N^*\(\xi^{\T}_\bullet \otimes \cO_{C_{i_U^*\Fd}}\),
\end{equation*}
providing the equality $0^!\eqref{CCCCC}=\eqref{CCCCC1}$ in $K_0^{\T}(T^{\mathrm{mov}}\oplus N)_{\loc}$. The proof is similar to that of \eqref{Lem620}.

We again consider the double deformation
$$
D\ :=\ M^o_{L^{\T}\times \AA^1/M^o_{U^{\T}/U}}\To \AA^1_{\nu}\ \ \text{from}\ \ M^o_{U^{\T}/U}\To \AA^1_{\lambda}
$$
to the normal cone, so that $M_3$ lies in the restriction $D|_{\nu=0}$ and equals that on $\lambda\neq 0$. The deformation $M^o_{L^{\T}/N}$ lies in the restriction $D|_{\lambda=0}$ and equals that on $\nu\neq 0$. Note that $f$ lies in $I_{L^{\T}\times \AA^1/M^o_{U^{\T}/U}}^2$ so that we can define
$$
\mu: \overline{D}:=M^o_{\Fd}\To D\ \ \text{defining}\ \ \mathrm{sp}\ \ \text{in}\ \ \eqref{defsp},
$$
by abusing notation $\mu$. Then $\mu^*M_3$ lies in $\overline{D}|_{\nu=0}$ and equals that on $\lambda\neq 0$. The pullback $\mathrm{pr}_N^*M^o_{i_U^*\Fd}$ of the cover $\mu_{\T}:M^o_{i_U^*\Fd}\to M^o_{L^{\T}/U^{\T}}$\vspace{-1.0mm} lies in $\overline{D}|_{\lambda=0}$ and equals that on $\nu\neq 0$.  The tautological factorisation $\overline{\xi}_\bullet$ restricts to $\xi'_\bullet$ on $\mu^*M_3$ and to $\mathrm{pr}_N^*\xi^{\T}_\bullet$ on $M^o_{\Fd}$ by their construction as the closures. Hence, it is sufficient to prove that $0^!\cO_{\mu^*M_3}=\mathrm{pr}_N^*\cO_{C_{i_U^*\Fd}} \in K_0^{\T}(\overline{D}|_{(0,0)})$. This holds true, 
$$
0_\lambda^!\cO_{\mu^*M_3}=0_\lambda^!0_{\nu}^!\cO_{\overline{D}}=0_{\nu}^!0_\lambda^!\cO_{\overline{D}}=0^!_{\nu}\mathrm{pr}_N^*\cO_{M^o_{i_U^*\Fd}}=\mathrm{pr}_N^*\cO_{C_{i_U^*\Fd}}.
$$

\subsection*{Fourth commutativity}
Now using the fourth arrow $i_4:C_{L^{\T}/U^{\T}}\into C_{L^{\T}/N}$ in \eqref{sqarrows}, we prove the commutativity of the diagram,
\beq{4thstep}
\xymatrix@C=3.0mm@R=10mm{
K\(\MF^{\T}(U,f)\) \ar[r]^-{\mathrm{sp}_3} \ar[d]_-{i_U^*} & K\(\mf ^{\T} (C_{L^{\T}/ N}, f/\lambda^2)\) \ar[rrrr]^-{\cS^{\vir}_E\otimes\sqrt{\det T^*}\otimes \pi^*}&&&&K_0^{\mathsf{T}}(T^{\mathrm{mov}}\oplus N)_\mathrm{loc} \\
K\(\MF^{\T}(U^{\T},f^{\T})\) \ar[r]^-{\mathrm{sp}_{\T}} &K\(\mf ^{\T} (C_{L^{\T}/ U^{\T}}, f/\lambda^2)\) \ar[rrrr]^-{\frac{\cS^{\vir}_{E^{\mathrm{fix}}}\otimes\sqrt{\det T^{\mathrm{fix}*}}\otimes \pi^*_{\T}}{\mathfrak{e}_{\T}(N_{U^{\T}/U})\cdot\sqrt{\mathfrak {e}_{\mathsf{T}}}(T_{L/U}^\mathrm{mov})}}\ar[u]^-{\mathrm{pr}_N^*} &&&&K_0^{\mathsf{T}}(L^{\T})_\mathrm{loc}. \ar[u]_-{i_{*}}
}
\eeq
Here, $i:L^{\T}\into T^{\mathrm{mov}}\oplus N$ denotes the embedding of the zero section. The first square is commutative by definition of $\mathrm{sp}_3$ \eqref{sp3}. For the second square, we chase an arbitrary coherent factorisation $G_\bullet\in \mf^{\T}(C_{L^{\T}/U^{\T}},f/\lambda^2)$. Since $E|_{L^{\T}}$ and $T|_{L^{\T}}$ split into $E^{\mathrm{mov}}\oplus E^{\mathrm{fix}}$ and $T^{\mathrm{mov}}\oplus T^{\mathrm{fix}}$, respectively, we get
\begin{align}\label{reallylast}
& \cS^{\vir}_E\otimes\sqrt{\det T^*}\otimes \pi^*\(\mathrm{pr}_N^*(G_\bullet)\) \\
&= \cS^{\vir}_{E^{\mathrm{mov}}} \otimes \sqrt{\det T^{\mathrm{mov}*}}\otimes \cS^{\vir}_{E^{\mathrm{fix}}} \otimes\sqrt{\det T^{\mathrm{fix}*}}\otimes \mathrm{pr}_N^*\pi_{T^{\mathrm{mov}}}^*\(\pi_{\T}^*(G_\bullet)\), \nonumber
\end{align}
where $\pi_{T^{\mathrm{mov}}}$ denotes the base-change of $E^{\mathrm{mov}}\to [E^{\mathrm{mov}}/T^{\mathrm{mov}}]$ whose composition with that of $\pi_{\T}:E^{\mathrm{fix}}\to [E^{\mathrm{fix}}/T^{\mathrm{fix}}]$ becomes that of $\pi:E\to$~$[E/T]$. Then the morphism $\mathrm{pr}_N^*\pi_{T^{\mathrm{mov}}}^*$ is the inverse of $i^*$, which is again the inverse of $i_*/\mathfrak{e}_{\T}(T^{\mathrm{mov}}\oplus N)$ by the self-intersection formula. Since the pullback space
\begin{align*}
\mathrm{pr}_N^*\pi_{T^{\mathrm{mov}}}^*\(\pi_{\T}^*(C_{L^{\T}/U^{\T}})\)&\cong T^{\mathrm{mov}}\oplus 0 \oplus C_{\T}\oplus N\\
& \subset T^{\mathrm{mov}}\oplus E^{\mathrm{mov}}\oplus E^{\mathrm{fix}}\oplus N
\end{align*}
does not meet $E^{\mathrm{mov}}$, the tautological section on $E^{\mathrm{mov}}$ becomes zero on the space. Hence $\cS^{\vir}_{E^{\mathrm{mov}}} $ in \eqref{reallylast} equals $\surd{\mathfrak{e}}_{\T}(E^{\mathrm{mov}})$ \cite[Page 56]{OT1}. Therefore, using $\surd{\mathfrak{e}}_{\T}(T^{\mathrm{mov}}_{L/U})=\mathfrak{e}_{\T}(T^{\mathrm{mov}})\cdot \surd{\det T^{\mathrm{mov}}}/\surd{\mathfrak{e}}_{\T}(E^{\mathrm{mov}})$, \eqref{reallylast} becomes 
$$
i_*\left(\frac{\cS^{\vir}_{E^{\mathrm{fix}}}\otimes\sqrt{\det T^{\mathrm{fix}*}}\otimes \pi^*_{\T}(G_\bullet)}{\mathfrak{e}_{\T}(N_{U^{\T}/U})\cdot\sqrt{\mathfrak {e}_{\mathsf{T}}}(T_{L/U}^\mathrm{mov})}\right),
$$
proving the commutativity of \eqref{4thstep}.

\appendix
\section{Spin structure and spinor bundle} \label{AppA}
\subsection*{Spin structure} 
Let $\rho:\mathrm{Spin}(2n) \to SO(2n)$ denotes the universal $(\Z/2)$-covering. A {\em spin structure} on a $SO(2n)$-bundle $E$ on $Y$ is a choice of $(\Z/2)$-equivariant lift of the principal frame bundle $\P_{SO}(E)$. i.e., a commutative diagram
$$
\xymatrix{
\P_{\mathrm{Spin}}(E) \ar[r]^\xi \ar[dr]^\pi & \P_{SO}(E) \ar[d] \\
& Y
}
$$
such that $\pi$ is a principal $\mathrm{Spin}(2n)$-bundle and $\xi$ satisfies $\xi(e\cdot g) = \xi(e)\cdot\rho(g)$. When we say $(E, q)$ is spin, it is understood as a $SO$-bundle with a preferred choice of spin structure.

\subsection*{Spinor bundle} 
A spin bundle $(E, q)$ comes with the \emph{spinor bundle} denoted by $\cS_E$. It is well-known that the spin group $\mathrm{Spin}(2n)$ is a subgroup of the standard Clifford algebra $Cl(\C^{2n}, q)$ (with multiplication). Hence the unique nontrivial irreducible Clifford module $S$ of $Cl(\C^{2n},q)$ is acted on by $\mathrm{Spin}(2n)$. The spinor bundle $\cS_E$ is then defined to be
$$
\cS_E : = \P_{\mathrm{Spin}}(E)\times _{\mathrm{Spin}(2n)} S.
$$
It is an irreducible Clifford module of $Cl(E,q)$. A $(\Z/2)$-grading on $\cS_E$ is determined by the orientation on $E$. Let $\{e_1, \ldots, e_{2n}\}$ be a {\em positive} orthonormal local frames of $E$ giving the orientation $\omega= i^n e_1\cdots e_{2n}$. Since its square is $\id$, we obtain an eigen-decomposition
$$
\cS_E \= \cS_{E}^+\;\oplus\; \cS_{E}^-.
$$

\subsection*{Pfaffian lines and square root determinant line bundle}
When $(E,q)$ is spin, we can assign the square root of determinant line bundle $\surd{\det \Lambda}$
to each maximal isotropic subbundle $\Lambda \subset E$. For each pair of a point $y\in Y$ with a maximal isotropic subspace $\Lambda_y \subset E_y$, we define its \emph{Pfaffian line} 
$$
\mathrm{Pf}(\Lambda_y) := \left\{v\in (\cS_E)_y : \Lambda_y \cdot v \equiv 0\right\}.
$$
To check that $\mathrm{Pf}(\Lambda_y)$ is a line, choose a basis $\{\lambda_1, \ldots, \lambda_n, \lambda^*_1, \ldots, \lambda^*_n\}$ of $E_y$ so that $\Lambda _y = \langle \lambda_1, \ldots, \lambda_n\rangle$ and $q(\lambda_i, \lambda_j^*)=\delta_{ij}$. The Clifford relation $\lambda_i\lambda_i^*+\lambda_i^*\lambda_i  =1$ implies that $(\lambda_i\lambda_i^*, \lambda_i^*\lambda_i )$ is an idempotent pair. Therefore we obtain a decomposition of $(\cS_E)_y$ by two components:
$$
(\cS_E)_y = (\lambda_i\lambda_i^*) \cdot (\cS_E)_y \oplus (\lambda_i^*\lambda_i) \cdot (\cS_E)_y.
$$
The Clifford relation tells us that $(\lambda_i \cdot)$ annihilates the first component, and sends the second component isomorphically to the first one. Doing this process inductively, we conclude that 
$$
\mathrm{Pf}(\Lambda_y) = \prod_{i=1}^n(\lambda_i\lambda_i^*) \cdot (\cS_E)_y. 
$$
Each $(\lambda_i\lambda_i^*)$ halves the dimension of $(\cS_E)_y$, so $\mathrm{Pf}(\Lambda_y)$ must be a line. 
This assignment defines a morphism
$$
\mathrm{Pf} : \mathrm{IGr}_Y(n, E) \to \mathbb P_Y(\cS_E),\hspace{10pt}(y, \Lambda_y) \mapsto \(y, \mathrm{Pf}(\Lambda_y)\),
$$
which fits into a commutative diagram 
\beq{IGrdia}
\xymatrix{
\mathrm{IGr}_Y(n, E) \ar[r]^{\mathrm{Pf}} \ar[dr]& \mathbb P_Y(\cS_E)\ar[d] \\
& Y \ar@/^1pc/[ul]^\Lambda
}
\eeq
satisfying $\mathrm{Pf}^*\cO_{\mathbb P_Y(\cS_E)}(-2) \simeq \det {\varLambda}$ \cite[Proposition 12.3.1]{PS}, where $\varLambda$ is the universal isotropic bundle on $\mathrm{IGr}_Y(n, E)$. Hence $\mathrm{Pf}^*\cO_{\mathbb P_Y(\cS_E)}(-1)$ can be taken to be the square root determinant line bundle $\surd{\det \varLambda}$.

Since an isotropic subbundle $\Lambda \subset E$ gives a section of the isotropic Grassmannian bundle as in \eqref{IGrdia}, its square root determinant line bundle can be defined as 
$$
\sqrt{\det\Lambda}\ :=\ (\Lambda\circ \mathrm{Pf})^*\cO_{\mathbb P_Y(\cS_E)}(-1).
$$

\subsection*{Proof of Proposition \ref{prop: sqcorrection}}
Now we are ready to prove Proposition \ref{prop: sqcorrection}.
\begin{proof}[Proof of Proposition \ref{prop: sqcorrection}]
Since $\surd{\det \Lambda}$ is a line in $\cS_E$, there is a canonical morphism
$$
Cl(E, q) \To \sqrt{\det \Lambda}^{-1}\otimes \cS_E.
$$ 
Its kernel contains the left ideal generated by $\Lambda$ by definition of the Pfaffian line. We can check it is exactly the kernel since the kernel must be isotropic. Hence the morphism is surjective by counting dimensions. This induces an isomorphism (of ungraded modules) from the quotient
\beq{eq: spiniso}
\cS_\Lambda  \rt{\cong} \sqrt{\det \Lambda}^{-1}\otimes \cS_E.
\eeq
Hence $\cS_\Lambda \otimes \sqrt{\det \Lambda}$ is independent of $\Lambda$. Using a local basis of $\Lambda$, one can check that $\omega: \cS_{\Lambda}\to \cS_{\Lambda}$ takes $\Lambda^{\mathrm{even}}\Lambda^*$ to $(-1)^{|\Lambda |}\Lambda^{\mathrm{even}}\Lambda^*$ (and takes $\Lambda^{\mathrm{odd}}\Lambda^*$ to $(-1)^{|\Lambda |}\Lambda^{\mathrm{odd}}\Lambda^*$). So \eqref{eq: spiniso} is an isomorphism after shifting by $(-1)^{|\Lambda|}$.
\end{proof}

\subsection*{Without spin}
Even if $E$ is not spin, fixing one maximal isotropic subbundle $\Lambda_0\subset E$, we have a similar diagram of \eqref{IGrdia} replacing $\cS_E$ by $\cS_{\Lambda_0}$,
$$
\xymatrix{
\mathrm{IGr}_Y(n, E) \ar[r]^{\mathrm{Pf}_0} \ar[dr]& \mathbb P_Y(\cS_{\Lambda_0})\ar[d] \\
& Y 
}
$$
satisfying $\mathrm{Pf}_0^*\cO_{\mathbb P_Y(\cS_{\Lambda_0})}(-2) \simeq \det {\varLambda}\otimes \det \Lambda^*_0$. Here for another maximal isotropic subbundle $\Lambda\subset E$, there is a line bundle on $Y$ whose square is $\det {\Lambda}\otimes \det \Lambda^*_0$.

\bibliographystyle{halphanum}

\bigskip \noindent {\tt{dwchoa@chungbuk.ac.kr}} \medskip

\noindent Department of Mathematics, Chungbuk National University, Cheongju 28644, Korea

\bigskip \noindent {\tt{jeongseok@snu.ac.kr}} \medskip

\noindent Department of Mathematical Sciences and Research Institute of Mathematics, Seoul National University, Seoul 08826, Korea



\end{document}